\theoremstyle{plain}
\newtheorem{theorem}{Theorem}[section]
\newtheorem{corollary}[theorem]{Corollary}
\newtheorem{lemma}[theorem]{Lemma}
\theoremstyle{definition}
\newtheorem{definition}[theorem]{Definition}
\theoremstyle{remark}
\newtheorem{remark}[theorem]{Remark}
\newtheorem{remarks}[theorem]{Remarks}
\newtheorem{example}[theorem]{Example}
\numberwithin{equation}{section}
\title[Riesz distributions and Laplace transform in the Dunkl setting]
{Riesz distributions and Laplace transform in the Dunkl setting of type A}
\author{Margit R\"osler} 
\address{Insitut f\"ur Mathematik, Universit\"at Paderborn, Warburger Str. 100, D-33098 Paderborn, Germany}
\email{roesler@math.upb.de}
\subjclass[2010]{Primary 33C52; Secondary 43A85, 33C80, 44A10.}
\keywords{Dunkl theory, Riesz distributions, Laplace transform, multivariate hypergeometric functions} 
\begin{document}
\date{\today}

\begin{abstract} We study Riesz distributions in the framework of rational Dunkl theory associated with root systems of type A.
As an important tool, we employ a Laplace transform involving the associated Dunkl kernel, which essentially goes back to Macdonald
 \cite{M1}, but was so far only established at a formal level.
We give a rigorous treatment of this transform based on suitable estimates of the type A Dunkl kernel. 
Our main result is a precise analogue in the Dunkl setting of a well-known result by Gindikin, stating that
a Riesz distribution on a symmetric cone is  a positive measure if and only if 
its exponent is contained in the Wallach set. For Riesz distributions in the Dunkl setting, we obtain an analogous characterization in terms of 
 a generalized Wallach set which depends on  the multiplicity parameter on the root system.

\end{abstract}

\maketitle

\section{Introduction}

Riesz distributions play a prominent role in the harmonic analysis on symmetric cones and the study of the wave equation, 
but also 
 in multivariate statistics due to 
their close relation to Wishart distributions, see \cite{FK} for some important aspects. To motivate our results, let us describe a typical example:
Consider the  set $\Omega_n$ of positive definite 
$n\times n$-matrices over $\mathbb R$, which is an open (and actually symmetric) cone in the  space
$ \,\mathrm{Sym}_n = \{ x\in M_n(\mathbb R): x=x^t\}.$ The latter is a Euclidean space (actually, a Euclidean Jordan algebra) with scalar product 
$( x\vert y) = {\rm tr}(xy).$
For indices $\mu \in \mathbb C$ with $ \text{Re}\, \mu > \mu_0:= (n-1)/2$, the Riesz distributions 
$R_\mu$ associated with $\Omega_n$ are defined as the complex Radon measures on
$\mathrm{Sym}_n$ which are defined by 
$$ R_\mu(\varphi) = \frac{1}{\Gamma_{\Omega_n}(\mu)} \int_{\Omega_n}\varphi(x) (\det x)^{\mu -\mu_0-1}  dx $$
where $\Gamma_{\Omega_n}$ is the Gindikin gamma function 
$$ \Gamma_{\Omega_n}(\mu) = \int_{\Omega_n} e^{-{\rm tr}\, x}(\det x)^{\mu -\mu_0-1} dx.$$ 
Considered as tempered distributions on $\mathrm{Sym}_n$, the Riesz measures $R_\mu$ satisfy the recursion
$$ \det\Big(\frac{\partial}{\partial x}\Big) R_\mu  = R_{\mu-1},$$
see \cite{FK}. 
Thus the mappping $\mu \mapsto R_\mu$ uniquely extends to a holomorphic mapping on $\mathbb C$ with values in the space of tempered distributions
$\mathcal S^\prime(\mathrm{Sym}_n).$
Note that for $n=1$, the Riesz distributions are just the homogeneous distributions on $\mathbb R_+ =]0, \infty[$ 
which are obtained by holomorphic extension from the Riemann-Liouville measures 
$$ R_\mu(\varphi) = \frac{1}{\Gamma(\mu)} \int_0^\infty \varphi(x)x^{\mu-1}dx \quad (\text{Re}\, \mu > 0).$$

It is a famous result due to Gindikin \cite{G} that a Riesz distribution associated with a symmetric cone is actually a positive measure if and only if 
its index $\mu$ belongs to the so-called Wallach set. The Wallach set plays an important role in the study of Hilbert spaces of holomorphic functions on symmetric domains, 
see \cite[Chapter XIII]{FK}. 
In the case of the symmetric cone  $\Omega_n$, it is given by
$$ \Bigl\{0, \, \frac{1}{2}\,,\ldots, \frac{n-1}{2}\Bigr\}\cup \Bigl] \frac{n-1}{2}, \infty \Bigr[.$$

In the present paper, we study Riesz distributions in the framework of Dunkl operator theory associated with the  root system 
\begin{equation}\label{A} A_{n-1}= \{ \pm (e_i-e_j): 1 \leq i < j \leq n\} \subset \mathbb R^n. \end{equation}  
(Rational) Dunkl operators are commuting differential-reflection operators associated with a root 
system on some Euclidean space which were intoduced by C.F. Dunkl in \cite{D1}. There is a well-developed harmonic analysis associated with these operators which generalizes both the classical 
Euclidean Fourier analysis as well as the radial harmonic analysis on Riemannian symmetric spaces of Euclidean type. For a general background  see e.g. 
\cite{DX, dJ, R3}. Among the more recent results 
in harmonic analysis associated with Dunkl operators let us mention \cite{ADH, Ve}. 

For $A_{n-1}$, the Dunkl operators in the directions of the standard basis $(e_i)_{1\leq i \leq n}$ of $\mathbb R^n$ are given by 
\begin{equation}\label{Dops} T_i(k) = \frac{\partial}{\partial x_i}  + \,
k\cdot\! \sum_{j\not=i}\frac{1}{x_i-x_j}(1-\sigma_{ij}) \end{equation}
where $\sigma_{ij}$ is the reflection in $\mathbb R^n$ which acts on functions by exchanging the coordinates $x_i$ and $x_j$, 
and $k\in \mathbb C$ is a so-called multiplicity parameter. 
For some values of $k$, Dunkl theory of type $A_{n-1}$  is closely related to the harmonic analysis on symmetric cones, as will be explained in Sections  \ref{Dunkl} and \ref{3}.
For example, analysis on the cone $\Omega_n$ for structures which depend only on the eigenvalues boils down to 
Dunkl analysis of type $A_{n-1}$  for structures on $\mathbb R_+^n$ which are invariant 
under the symmetric group $S_n$; the multiplicity is  hereby  $k= 1/2.$ 
In this paper we shall consider the general, non-symmetric Dunkl setting associated with the root system $A_{n-1}$ 
and arbitrary nonnegative multiplicities. For fixed multiplicity $k\geq 0$, the Riesz measures of type $A_{n-1}$ on $\mathbb R^n$ are defined by 
$$ R_\mu(\varphi) := \frac{1}{d_n(k)\Gamma_n(\mu;k)}\int_{\mathbb R_+^n} \varphi(x) D(x)^{\mu-\mu_0-1}\omega_k(x)dx,\quad \text{Re}\, \mu > \mu_0= k(n-1)$$
where $d_n(k)>0$ is a certain normalization constant, $\Gamma_n(\mu;k)$ is a multivariate version of the Gamma function (see Section \ref{5}), 
\begin{equation}\label{weight} \omega_k(x) = \prod_{1\leq i < j \leq n} |x_i-x_j|^{2k}\end{equation}
is the Dunkl weight function associated with $A_{n-1}$ and multiplicity $k$, and 
$$ D(x) := \prod_{i=1}^n x_i\,.$$
It turns out that the Riesz measures $R_\mu$ satisfy the distributional recursion 
$$ D\big(T(k)\big) R_\mu = R_{\mu-1}$$
with the Dunkl operator $D(T(k)):= \prod_{i=1}^n T_i(k),$
and hence the mappping $\mu \mapsto R_\mu$ extends uniquely to a holomorphic mapping on $\mathbb C$ with values in $\mathcal S^\prime(\mathbb R^n).$ 
This was already observed in the recent
thesis \cite{L}, where Dunkl-type Riesz distributions were introduced to study questions related to  Huygens' principle. 
In this  paper, we carry out a more detailed study of these distributions. As in the case of symmetric cones, an important tool will be 
a suitable version of the Laplace transform, given by
\begin{equation}\label{laplace_intro} \mathcal L_kf(z) = \int_{\mathbb R_+^n} f(x) E_k^A(-x,z) \omega_k(x)dx
\end{equation}
where $E_k^A$ denotes the Dunkl kernel associated with $A_{n-1}$ and multiplicity $k.$ 
This is a non-symmetric variant of a Laplace transform which was first introduced on  a purely formal level by Macdonald in his manuscript \cite{M1}
and was further studied for $n=2$ by Yan \cite{Y}. 
The transform \eqref{laplace_intro}  was used by Baker and Forrester \cite{BF3} and by  Sahi and Zhang \cite{SZ}, but 
due to a lack of knowledge about the decay properties  of the Dunkl kernel, convergence issues could not be properly settled. 

We shall give in Section \ref{3} a rigorous treatment of the Laplace transform \eqref{laplace_intro},  based on suitable estimates for the 
Dunkl kernel of type $A$ which were conjectured  in \cite{BF3}.
In particular, we provide a Cauchy-type inversion theorem, which improves the injectivity statements for the 
Laplace transform
in \cite{Y} and \cite{BF3}. Let us mention at this point that in connection with the Laplace transform, 
specific properties of the type $A$ Dunkl kernel are decisive.
In Section \ref{dist},  we extend the  Laplace transform to distributions. 
Section \ref{5} is then devoted to the study of the Riesz distributions $R_\mu$ in the Dunkl setting. We
compute their Laplace transforms and study for which indices they are actually measures. 
Our  main result (Theorem \ref{maintheorem2}) is a precise analogue of 
Gindikin's result for Riesz distributions on symmetric cones: 
The Dunkl type Riesz distribution $R_\mu$ on $\mathbb R^n$ is a positive measure exactly if $\mu$ belongs to the generalized Wallach set 
\begin{equation}\label{wallachintro}   \{0,k,\ldots,k(n-1)\}\,\cup\, ]k(n-1),\infty[\,.\end{equation}
The Riesz distributions associated with the discrete Wallach points $kr, \, 0\leq r \leq n-1$ can be determined explicitely;  they are
supported in the strata of the cone $\overline{\mathbb R_+^n}$, see Theorem \ref{maintheorem}.  The proofs of Theorems 
\ref{maintheorem} and \ref{maintheorem2} are based on analysis for multivariable hypergeometric functions
which are given in terms of Jack polynomial expansions in the sense of \cite{Ka, M1}, combined with methods of Sokal \cite{S} 
and a variant of the Shanbhag principle from \cite{CL}.
We finally mention that the generalized Wallach set \eqref{wallachintro} also plays an interesting role in connection with integral representations of Sonine type between
Bessel functions of type $B_n$ and the positivity of intertwining operators in the $B_n$-case, see \cite{RV2}.


\section{Dunkl theory for root system $A_{n-1}$}\label{Dunkl}

In this section, we provide a brief introduction to the relevant concepts from rational Dunkl theory; for a background the reader is referred to  
\cite{DX, dJ,  O, R3}.  
We shall consider the root system $A_{n-1}$ in the  Euclidean space $\mathbb R^n$ with the usual scalar product 
$\langle x,y\rangle = \sum_{i=1}^n x_iy_i\,,$ which we extend to $\mathbb C^n\times \mathbb C^n$ in a bilinear way. 
The corresponding finite reflection group is the symmetric group $S_{n}$ on $n$ elements. 
 For fixed multiplicity parameter $k$, the associated Dunkl operators $T_i(k)$ (as defined in \eqref{Dops}) commute, c.f. \cite{D1}. Therefore the 
assignment $x_i \mapsto T_i(k)$ extends to a unital algebra homomorphism $$ p\mapsto p(T(k)), \, \mathbb C[\mathbb R^n] \to \text{End}(\mathbb C[\mathbb R^n]).$$

In this paper, we shall always assume that $ k \geq 0.$ Then for each spectral 
parameter $y = (y_1, \ldots, y_n) \in \mathbb C^n$ there exists
a unique analytic function $f =E_k^A(\,.\,,y)$ satisfying
\[ T_i(k) f = y_i f \,\,\,  \text{ for }\, i =1, \ldots n; \quad f(0) = 1,\]
see \cite{DX, O}. The function $E_k^A$ is called the Dunkl kernel of type $A_{n-1}$. It extends to a holomorphic function on $\mathbb C^n\times \mathbb C^n$ with 
$$ E_k^A(x,y) = E_k^A(y,x), \quad  E_k^A(\lambda x,y) = E_k^A(x, \lambda y), \quad E_k^A(\sigma x,\sigma y) = E_k^A(x,y)$$ 
for all $\lambda \in \mathbb C$ and $\sigma\in S_n.$
 According to \cite{R2}, $E_k^A$ has a positive integral representation. 
More precisely, for each $x\in \mathbb R^n$ there exists a unique probability measure $\mu_x^k\in M^1(\mathbb R^n)$ 
such that
\begin{equation}\label{int_repRo} E_k^A(x,z) = \int_{\mathbb R^n} e^{\langle \xi,z\rangle} d\mu_x^k(\xi) 
\quad \text{for all } z\in \mathbb C^n.\end{equation}
The support of $\mu_x$ is contained in $C(x),$ the convex hull of the orbit of $x$ under 
the action of  $S_n.$ 
Notice  that 
\[E_k^A(x,y)> 0 \quad \text{and }\,  |E_k^A(ix, y)|\leq 1 \quad \text{for all } x,y\in \mathbb R^n .\]

\begin{lemma}\label{estim1} For $x\in \mathbb R^n $ and $y,z \in \mathbb C^n,$
\[ |E_k^A(x, y+z)|\leq E_k^A(x, {\rm Re}\, z) \cdot e^{\max_{\sigma\in S_n}\langle \sigma x, \,{\rm Re}\, y\rangle}.\] 
In particular, 
\[ |E_k^A(x,z)|\leq E_k^A(x, {\rm Re}\, z).\]
\end{lemma}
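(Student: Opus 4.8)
The plan is to prove the estimate directly from the positive integral representation \eqref{int_repRo}. Writing $E_k^A(x, y+z) = \int_{\mathbb R^n} e^{\langle \xi, y+z\rangle}\,d\mu_x^k(\xi)$ and splitting the exponent, we bound the integrand pointwise by its absolute value:
\[
\bigl| e^{\langle \xi, y+z\rangle}\bigr| = e^{\operatorname{Re}\langle \xi, y+z\rangle} = e^{\langle \xi, \operatorname{Re} y\rangle}\, e^{\langle \xi,\operatorname{Re} z\rangle},
\]
using that $\xi$ is real so that $\langle \xi, w\rangle$ is real-linear in $w$ with $\operatorname{Re}\langle \xi, w\rangle = \langle \xi, \operatorname{Re} w\rangle$. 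The factor $e^{\langle \xi, \operatorname{Re} z\rangle}$ will be kept inside the integral, while $e^{\langle \xi, \operatorname{Re} y\rangle}$ will be pulled out as a supremum.

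The key point is the support statement: $\mu_x^k$ is concentrated on $C(x)$, the convex hull of the $S_n$-orbit of $x$. For any $\xi \in C(x)$ we have $\langle \xi, \operatorname{Re} y\rangle \leq \max_{\sigma \in S_n}\langle \sigma x, \operatorname{Re} y\rangle$, since a linear functional on a polytope attains its maximum at a vertex, and the vertices of $C(x)$ are among the orbit points $\sigma x$. Hence $e^{\langle \xi, \operatorname{Re} y\rangle} \leq e^{\max_{\sigma \in S_n}\langle \sigma x, \operatorname{Re} y\rangle}$ for $\mu_x^k$-almost every $\xi$. Substituting this bound and using that $\mu_x^k$ is a probability measure,
\[
|E_k^A(x, y+z)| \leq \int_{\mathbb R^n} e^{\langle \xi, \operatorname{Re} y\rangle}\, e^{\langle \xi, \operatorname{Re} z\rangle}\,d\mu_x^k(\xi) \leq e^{\max_{\sigma\in S_n}\langle \sigma x,\, \operatorname{Re} y\rangle} \int_{\mathbb R^n} e^{\langle \xi, \operatorname{Re} z\rangle}\, d\mu_x^k(\xi),
\]
and the remaining integral is exactly $E_k^A(x, \operatorname{Re} z)$ by \eqref{int_repRo} applied with the real spectral parameter $\operatorname{Re} z$. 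This gives the first inequality.

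For the second inequality, set $y = i\,\operatorname{Im} z$ and replace $z$ by $\operatorname{Re} z$, so that $y + z$ is replaced by $z$; then $\operatorname{Re} y = 0$, the exponential prefactor equals $e^0 = 1$, and the first inequality collapses to $|E_k^A(x,z)| \leq E_k^A(x, \operatorname{Re} z)$. I do not anticipate a serious obstacle here; the only points requiring mild care are the justification that the integrals converge (which follows since $E_k^A(x, \operatorname{Re} z) < \infty$ by \eqref{int_repRo}, being an integral of a positive function against a probability measure, so dominated convergence / the pointwise bound applies) and the observation that extreme points of $C(x)$ lie in the orbit $S_n x$, which is elementary.
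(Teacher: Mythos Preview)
Your proof is correct and follows essentially the same argument as the paper: both use the positive integral representation \eqref{int_repRo}, take absolute values inside to get $e^{\langle \xi,\operatorname{Re} y\rangle}e^{\langle \xi,\operatorname{Re} z\rangle}$, bound the first factor by the maximum over the $S_n$-orbit using that $\operatorname{supp}\mu_x^k\subseteq C(x)$, and identify the remaining integral as $E_k^A(x,\operatorname{Re} z)$. Your version is slightly more explicit about why the supremum of a linear functional over $C(x)$ is attained on the orbit, but the substance is identical.
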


\begin{proof} According to \eqref{int_repRo},
\begin{align*} \vert E_k^A(x, y+z)\vert  &\leq  \int_{C(x)} e^{\langle \xi,\, {\rm Re}\,y + {\rm Re}\,z\rangle} d\mu_x^k(\xi)\notag\\
&\leq e^{\max_{\sigma\in S_n} \langle \sigma x, \,{\rm Re}\, y\rangle}\cdot \int_{C(x)} 
e^{\langle \xi, {\rm Re}\,z\rangle} d\mu_x^k(\xi),\end{align*}
which implies the assertion.
\end{proof}

\begin{remark} The statement of Lemma \ref{estim1} holds in the context of arbitrary root systems: Consider some (reduced, not necessarily crystallographic) root system $R$ in a Euclidean space $(\frak a, \langle\,.\,,\,.\,\rangle )$ with 
associated reflection group $W$ and a multiplicity function $k \geq 0$ on $R$ (i.e. $k:R \to \mathbb C$ is $W$-invariant). Denote by $E_k$ the associated Dunkl 
kernel. Then for each $x\in \frak a$ there exists a unique probability measure $\mu_x^k$ on $\frak a$, supported in the convex hull of the $W$-orbit of $x$, such that $$E_k(x,z) = \int_{\frak a} e^{\langle \xi,z\rangle} d\mu_x^k(\xi) $$
for all $ z\in \frak a_{\mathbb C}$ (the complexification of $\frak a$). Thus by the same argument as above,
\[ |E_k(x, y+z)|\leq E_k(x, {\rm Re}\, z) \cdot e^{\max_{w\in W} \langle wx, \,{\rm Re}\, y\rangle} \quad \forall x\in \frak a, \, y,z\in \frak a_{\mathbb C}.\] 
\end{remark}

We return to the root system $A_{n-1}$ with multiplicity $k\geq 0$. Recall the weight function $\omega_k$ introduced in \eqref{weight}.
The associated Dunkl transform   on $L^1(\mathbb R^n, \omega_k)$ is defined by 
\[ \widehat f^{\,k}(y) = \frac{1}{c_k}\int_{\mathbb R^n} f(x) E_k^A(x,-iy) \omega_k(x)dx\]
with the normalization constant 
$$ c_k = \int_{\mathbb R^n } e^{-|x|^2/2}\omega_k(x)dx.$$
This is the classical Mehta integral, whose value is given by 
\begin{equation}\label{mehtaconst}  c_k = c_{k,n}  = (2\pi)^{n/2} \prod_{j=1}^n \frac{\Gamma(1+jk)}{\Gamma(1+k)},\end{equation}
c.f. \cite{M3}.
The Dunkl transform is a homeomorphism of the Schwartz space $\mathcal S(\mathbb R^n)$ and
there is an $L^1$-inversion theorem, see \cite{dJ}: if $f\in L^1(\mathbb R^n, \omega_k)$ with $\widehat f^{\,k}\in L^1(\mathbb R^n, \omega_k),$ then
$\, f(x) = (\widehat f^{\,k})^{\wedge\, k}(-x) $ for almost all $x\in \mathbb R^n.$
The Dunkl operators  act continuously on $\mathcal S(\mathbb R^n)$ 
and therefore also on the space $\mathcal S^\prime(\mathbb R^n)$ 
of tempered distributions on $\mathbb R^n$, via
\begin{align}\label{Dunkl_dist} \langle T_\xi(k)u, \varphi\rangle := -\langle u, T_\xi(k) \varphi \rangle, \quad u \in\mathcal S^\prime (\mathbb R^n), 
\, \, \varphi \in \mathcal S(\mathbb R^n).\end{align}
Besides the Dunkl kernel $E_k^A$, we shall also need the Bessel function of type $A_{n-1}$, 
\[J_k^A(x,y):= \frac{1}{n!}\sum_{\sigma\in S_n} E_k^A(\sigma x,y)\]
which is symmetric (i.e. $S_n$-invariant) in both arguments.

\begin{remark}
For certain values of $k$, the Bessel function $J_k^A$ has an interpretation in the context of symmetric spaces. 
In fact, consider for one of the (skew) fields $\mathbb F= \mathbb R, \mathbb C, \mathbb H$ the set 
$\, H_n(\mathbb F) :=\{ x \in M_n(\mathbb F): x = \overline x^t\}\,$
of Hermitian $n\times n$-matrices over $\mathbb F. $
The unitary group $U_n(\mathbb F)$ acts on $H_n(\mathbb F)$ by conjugation $\,x \mapsto uxu^{-1},$  and 
$X_n:= U_n(\mathbb F)\ltimes H_n(\mathbb F)/U_n(\mathbb F)$ is a symmetric space of Euclidean type, which can be 
identified with the tangent space of the symmetric cone $\,\Omega_n(\mathbb F) = \{ x\in H_n(\mathbb F): x \text{ positive definite}\}$ in the
point
$I_n$ (the identity matrix). 
It is well known that the spherical functions of  $X_n$, considered as functions of the spectra of matrices from 
$H_n(\mathbb F),$  can be identified with the Bessel functions $J_k^A(\,.\,, z), z\in \mathbb C^n$ with multiplicity $k = d/2$, 
where $d = \text{dim}_\mathbb R \mathbb F \in \{1,2,4\}$. For details see \cite{dJ2} and \cite{RV}.
\end{remark}

It will be important in this paper that for $k>0,$ 
the Bessel function $J_k^A$ has a series expansion in terms of Jack polynomials. 
To describe this as well as some related facts, we have to introduce further notation; 
references are \cite{Sta, Ka, BF1, FW}. 

Let 
$\Lambda_n^+ $ denote the set of partitions $\lambda= (\lambda_1, \lambda_2, \ldots )$ with at most $n$  parts. 
The number of parts of  $\lambda$ is also called its length and denoted by $l(\lambda).$ 
We consider the Jack polynomials $C_\lambda^{\alpha}:= C_\lambda^{(\alpha)}$ in $n$ variables with parameter $\alpha >0$, which are indexed by partitions $\lambda \in \Lambda_n^+$ 
and are normalized such that 
\begin{equation}\label{norm_Jack}
(z_1 + \cdots + z_n)^m = \sum_{|\lambda|=m} C_\lambda^\alpha (z) \qquad \text{for all }m \in \mathbb N_0,
z \in \mathbb C^n,\end{equation}
where $|\lambda|= \lambda_1 + \cdots + \lambda_n$ denotes the weight of $\lambda$. The polynomials $C_\lambda^\alpha$ are symmetric and  
homogeneous of degree $|\lambda|$. 
If $k>0,$ then according to the relations (3.22) and (3.37)  of \cite{BF3}, $J_k^A$ is a $_0F_0$-hypergeometric function in two variables: 
\begin{equation}\label{power-j-a}
 J_k^A(z,w) =\,  \sum_{\lambda\in \Lambda_n^+} \frac{1}{|\lambda|!}\cdot 
\frac{C_\lambda^{\alpha}(z) C_\lambda^{\alpha}(w)}{C_\lambda^{\alpha}({\underline 1})} =: \, _0F_0^\alpha(z,w) \quad \text{with }\, \alpha = 1/k
\end{equation}
where we use the notation $\,\underline 1 := (1, \ldots, 1) \in \mathbb R^n.$ 
It is known from \cite{KS} that 
\begin{equation}\label{Jack_pos}  C_\lambda^{\alpha}(z) = \sum_{ |\mu|= |\lambda|} c_{\lambda,\mu}^\alpha z^\mu 
\end{equation}
 with nonnegative coefficients 
$c_{\lambda,\mu}^\alpha \geq 0.$ Denoting $\|z\|_\infty = \sup_{1\leq i \leq n} |z_i|,\,$ we therefore have
\begin{equation}\label{jack_hilf} |C_\lambda^{\alpha}(z)| \leq C_\lambda^{\alpha}(\underline 1)\cdot \|z\|_\infty^{\,|\lambda|}\end{equation} 
and thus by \eqref{norm_Jack}
\begin{equation}\label{jack_estim}
\sum_{|\lambda|= m}\frac{|C_\lambda^{\alpha}(z) C_\lambda^{\alpha}(w)|}{C_\lambda^{\alpha}({\underline 1})}   \leq  (n\|z\|_\infty\|w\|_\infty)^m.
\end{equation}
This implies that the series \eqref{power-j-a}  converges locally 
uniformly on $\mathbb C^n\times \mathbb C^n.$ 

\begin{remark} An
 alternative proof of the expansion \eqref{power-j-a} is obtained by symmetrization from an analogous 
expansion of the Dunkl kernel in terms of non-symmetric Jack polynomials, see \cite{R1}, Lemma 3.1 and Example 3.6. 

\end{remark}


\section{The type $A$ Laplace transform}\label{3}

We again consider the root system $A_{n-1}$ with 
some fixed multiplicity $k \geq 0.$
For $x=(x_1, \ldots, x_n), \, y=(y_1, \ldots, y_n)\in \mathbb R^n$ we write $x\leq y$ (and $y\geq x$) if $\, x_i \leq y_i$ for all $i=1, \ldots n.$ 
This defines a partial order on $\mathbb R^n$. For $s \in \mathbb C$, 
we use the abbreviation 
$$ \underline s:= (s, \ldots, s) \in \mathbb C^n.$$
We further write $\mathbb R_+ := ]0,\infty[ $ and put
\[ \|z\|_1:= \sum_{i=1}^n |z_i| \quad \text{for }\, z\in \mathbb C^n.\]

The rigorous foundation of the Laplace transform in the Dunkl setting associated with $A_{n-1}$ will be based on the 
following factorization and  exponential decay of the Dunkl kernel $E_k^A$.

\begin{lemma}\label{estim2}
\begin{enumerate}
 \item[\rm{(1)}] For all $x, z\in \mathbb C^ n$ and $s\in \mathbb C$, 
\[ E_k^A(x, z + \underline s) = e^{\langle x,\underline s\,\rangle} \cdot E_k^A(x,z).\]
\item[\rm{(2)}] Let $x\in \overline{\mathbb R_+^n}$ and $a\in \mathbb R^n$. Then for all $z\in \mathbb C^n$ with 
${\rm Re}\, z \geq a$,
\[ |E_k^A(-x,z)|\leq E_k^A(-x, a)\leq \exp\,\bigl(-\|x\|_1\cdot \min_{1\leq i\leq n} a_i\bigr).\]
In particular, if ${\rm Re}\, z \geq \underline s\,$ for some $s>0$, then 
$$ |E_k^A(-x,z)| \leq e^{-s\|x\|_1} \quad \forall\, x\in \overline{\mathbb R_+^n}.$$
\end{enumerate}
Properties (1) and (2) are also valid for the Bessel function $J_k^A$. 
\end{lemma}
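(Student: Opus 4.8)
The plan is to derive everything from the positive integral representation \eqref{int_repRo}, $E_k^A(x,z)=\int_{C(x)} e^{\langle\xi,z\rangle}\,d\mu_x^k(\xi)$, together with the basic symmetries of $E_k^A$; the statement for $J_k^A$ then follows at the very end by symmetrization.

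For part (1), I would fix $x,z\in\mathbb C^n$ and $s\in\mathbb C$ and compute directly from \eqref{int_repRo} in the case $x\in\mathbb R^n$: since every $\xi\in C(x)$ is a convex combination of points in the $S_n$-orbit of $x$, and the functional $\eta\mapsto\langle\eta,\underline s\rangle=s\sum_i\eta_i$ is $S_n$-invariant and affine, we have $\langle\xi,\underline s\rangle=s\sum_i x_i=\langle x,\underline s\rangle$ for $\mu_x^k$-almost every $\xi$. Hence $E_k^A(x,z+\underline s)=\int e^{\langle\xi,z\rangle}e^{\langle\xi,\underline s\rangle}\,d\mu_x^k(\xi)=e^{\langle x,\underline s\rangle}E_k^A(x,z)$ for $x\in\mathbb R^n$ and all $z\in\mathbb C^n$, $s\in\mathbb C$. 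Since both sides are entire in $(x,z,s)\in\mathbb C^n\times\mathbb C^n\times\mathbb C$ (holomorphy of the Dunkl kernel) and agree for $x\in\mathbb R^n$, they agree everywhere by analytic continuation. Alternatively and more cleanly, one can observe that $\sum_i T_i(k)$ applied to $E_k^A(\cdot,z)$ gives $(\sum_i z_i)\,E_k^A(\cdot,z)$ and that the reflection terms in \eqref{Dops} cancel upon summation, so $\sum_i T_i(k)=\sum_i\partial/\partial x_i$; then $f(x):=e^{-\langle x,\underline s\rangle}E_k^A(x,z+\underline s)$ satisfies $T_i(k)f=(z_i+s-s)f=z_if$ and $f(0)=1$, whence $f=E_k^A(\cdot,z)$ by uniqueness.

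For part (2), let $x\in\overline{\mathbb R_+^n}$, $a\in\mathbb R^n$, and $z\in\mathbb C^n$ with $\mathrm{Re}\,z\ge a$. By Lemma~\ref{estim1}, $|E_k^A(-x,z)|\le E_k^A(-x,\mathrm{Re}\,z)$, and since $E_k^A(-x,\cdot)$ is given by the positive integral \eqref{int_repRo}, it is monotone: if $\mathrm{Re}\,z\ge a$ then $e^{\langle\xi,\mathrm{Re}\,z\rangle}\le e^{\langle\xi,a\rangle}$ for every $\xi$ in the support $C(-x)$? — here I must be careful about signs. The support of $\mu_{-x}^k$ is $C(-x)=-C(x)$, and for $x\in\overline{\mathbb R_+^n}$ every $\xi\in C(-x)$ is a convex combination of $-\sigma x$, so $\xi\le 0$ coordinatewise is \emph{not} true in general; what is true is that each $\xi=-\eta$ with $\eta\in C(x)$, so $\langle\xi,\mathrm{Re}\,z\rangle=-\langle\eta,\mathrm{Re}\,z\rangle\le-\langle\eta,a\rangle=\langle\xi,a\rangle$ because $\eta\ge 0$ coordinatewise (as $x\ge 0$ and the orbit and convex hull preserve nonnegativity) and $\mathrm{Re}\,z\ge a$. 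Thus $E_k^A(-x,\mathrm{Re}\,z)=\int e^{\langle\xi,\mathrm{Re}\,z\rangle}d\mu_{-x}^k(\xi)\le\int e^{\langle\xi,a\rangle}d\mu_{-x}^k(\xi)=E_k^A(-x,a)$. For the final bound, write $a_{\min}=\min_i a_i$; then $\langle\xi,a\rangle=-\langle\eta,a\rangle\le-a_{\min}\sum_i\eta_i=-a_{\min}\langle\eta,\underline 1\rangle=-a_{\min}\|x\|_1$, using $\sum_i\eta_i=\sum_i x_i=\|x\|_1$ for $\eta\in C(x)$ with $x\ge 0$ (again by $S_n$-invariance and affineness of $\eta\mapsto\sum_i\eta_i$). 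Integrating against the probability measure $\mu_{-x}^k$ gives $E_k^A(-x,a)\le e^{-a_{\min}\|x\|_1}$, which is the claim; the "in particular" statement is the special case $a=\underline s$, $s>0$.

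Finally, both parts transfer to $J_k^A(x,y)=\frac1{n!}\sum_{\sigma\in S_n}E_k^A(\sigma x,y)$: for (1) we get $J_k^A(x,z+\underline s)=\frac1{n!}\sum_\sigma e^{\langle\sigma x,\underline s\rangle}E_k^A(\sigma x,z)=e^{\langle x,\underline s\rangle}J_k^A(x,z)$ since $\langle\sigma x,\underline s\rangle=\langle x,\underline s\rangle$; for (2), note $\sigma x$ ranges over the $S_n$-orbit of $x\in\overline{\mathbb R_+^n}$, which again lies in $\overline{\mathbb R_+^n}$, and $\|\sigma x\|_1=\|x\|_1$, so the bound $|E_k^A(-\sigma x,z)|\le e^{-a_{\min}\|x\|_1}$ holds termwise and hence for the average $|J_k^A(-x,z)|$. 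The only genuinely delicate point is the sign bookkeeping in part (2) — keeping straight that the support of $\mu_{-x}^k$ consists of $-\eta$ with $\eta\in C(x)\subset\overline{\mathbb R_+^n}$ — but once that is set up correctly the monotonicity and the exponential estimate are immediate from positivity of the representing measure.
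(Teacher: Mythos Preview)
Your proof is correct and follows essentially the same approach as the paper: both arguments derive part~(1) from the integral representation \eqref{int_repRo} via the observation that $\langle\xi,\underline s\rangle=\langle x,\underline s\rangle$ is constant on $C(x)$, followed by analytic continuation in $x$; and both derive part~(2) by first reducing to real $z$ via Lemma~\ref{estim1}, then using that every $\eta\in C(x)$ has nonnegative coordinates to obtain the monotonicity $E_k^A(-x,\mathrm{Re}\,z)\le E_k^A(-x,a)$ and the final exponential bound. The only cosmetic difference is that the paper works directly with $\mu_x^k$ (using $E_k^A(-x,z)=\int_{C(x)}e^{-\langle\xi,z\rangle}\,d\mu_x^k(\xi)$) rather than with $\mu_{-x}^k$, which spares the sign bookkeeping you flagged as delicate.

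One small remark: in your alternative argument for~(1) via the eigenfunction characterization, the observation that $\sum_i T_i(k)=\sum_i\partial_i$ is not what is actually used to conclude $T_i(k)f=z_if$; what you need (and implicitly use) is that $e^{-\langle x,\underline s\rangle}$ is $S_n$-invariant, so that the difference part of $T_i(k)$ passes through it and the product rule $T_i(k)(e^{-\langle x,\underline s\rangle}g)=-s\,e^{-\langle x,\underline s\rangle}g+e^{-\langle x,\underline s\rangle}T_i(k)g$ holds. With that clarification the alternative is a valid and clean route.
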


\begin{proof} (1) By analyticity, it suffices to consider $x\in \mathbb R^n.$ Then the assertion follows easily from Proposition 3.19 of \cite{BF3}, where $s=1$.  It can also be  deduced from formula
\eqref{int_repRo}, as follows:
We have $\langle \xi, \underline 1\rangle = \langle x, \underline 1\rangle $ for all $\xi$ in the $S_n$-orbit of $x$,  which implies that
$\, \langle \xi, \underline s\rangle = \langle x, \underline s\rangle $ for all $\xi\in C(x)$ and all $s\in \mathbb C$.
The statement is then immediate from formula \eqref{int_repRo}.

(2) In view of Lemma \eqref{estim1} it suffices to consider $z\in \mathbb R^n$ with $z\geq a$.
Our assumption $x\geq 0$ implies that $\xi\geq 0$ for all $\xi \in C(x)$ and therefore also
$\langle \xi, z-a\rangle \geq 0$. 
Thus by \eqref{int_repRo}, 
\[ |E_k^A(-x,z)| = \int_{C(x)} e^{-\langle \xi, z-a\rangle} e^{-\langle \xi, a\rangle} d\mu_x^k(\xi) 
 \, \leq \,\int_{C(x)} e^{-\langle \xi, a\rangle}d\mu_x^k(\xi) = E_k^A(-x, a).\]
For the second inequality, we start with the last equality in the above formula 
and write $\xi\in C(x)$ as \[ \xi = \sum_{\sigma\in S_n} \lambda_\sigma  \sigma x \,\,\text{ with }\,
\,\lambda_\sigma\geq 0, \,\sum_{\sigma\in S_n} \lambda_\sigma = 1.\] 
Using the estimate
\[ \langle \sigma x,a\rangle \geq \|x\|_1 \cdot \min_{1\leq i \leq n} a_i \quad (\sigma\in S_n)\]
we obtain that $\, \langle \xi,a\rangle \geq \|x\|_1 \cdot \min a_i \,.$ This implies statement (2).  The same assertions for $J_k^A$ are immediate.
\end{proof}

Following \cite[Section 3.4]{BF3}, we define the type $A$ Laplace transform   of a function $f\in L_{\textrm{loc}}^1(\mathbb R_+^n,\omega_k)$ as
\begin{equation}\label{Laplacedef}
 \mathcal L_kf(z) := \int_{\mathbb R_+^n} f(x) E_k^A(-x,z) \omega_k(x)dx \quad (z\in \mathbb C^n),\end{equation}
provided the integral exists.

\begin{remarks} 1. In \cite[formula (2) on p.~38]{M1}, Macdonald
defines the Laplace transform with the $S_n$-invariant
kernel $$e(-x,z) = J_k^A(-x,z)$$ instead of $E_k^A(-x,z)$ (see \cite[p.26]{M1} for the definition of  $e$). If $f$ is $S_n$-invariant, then in  \eqref{Laplacedef} the Dunkl kernel $E_k^A(-x,z)$ may be 
replaced by $J_k^A(-x,z)$ 
without affecting the value of the integral. So in the symmetric case,
 our definition coincides with that of Macdonald up to a constant 
factor.

2. Macdonald's definition of the Laplace transform in \cite{M1} is closely related to the well-known Laplace transform on symmetric cones. To explain this relation, 
suppose that $V$ is a simple Euclidean Jordan algebra with Jordan multiplication $(x,y) \mapsto xy$ and scalar product 
$(x\vert y)=  {\rm tr}(xy)$ where ${\rm tr}$ denotes the Jordan trace on $V$, i.e. ${\rm tr}(x)$ is the sum of eigenvalus of $x$.
Let $\Omega \subset V$ be the associated symmetric cone. 
It can be written as a Riemannian symmetric space $\Omega=G/K$ 
where $G$ is the identity component of the automorphism group of $\Omega$ and $K = G\cap O(V).$ We refer to \cite{FK} for these facts and 
a general introduction to the analysis on symmetric cones. 
The Laplace transform of a function $F\in L_{\textrm{loc}}^1(\Omega)$ is defined by 
$$ \mathcal L F(y) = \int_\Omega F(x) e^{-(x\vert y)}dx  \quad (y\in V),$$ 
provided the integral exists. Suppose the rank of $V$ is $n.$ Then the  possible ordered spectra of elements from $\Omega$ are given by the set
$$ C_+ = \{\xi = (\xi_1, \ldots, \xi_n)\in \mathbb R^n: \xi_1 \geq \cdots \geq \xi_n >0\}.$$ 
If $F$ is $K$-invariant, it can be uniquely written as $F(x) = f(\text{spec}(x)),$ where $\text{spec}(x)\in C_+$ denotes the 
set of eigenvalues of $x$ ordered by size and $f: C_+\to 
\mathbb C$ is measurable.
Fix some Jordan frame $(c_1, \ldots, c_n)$ of $V$ (that is, the $c_j$ form a complete system of orthogonal primitive idempotents in $V$). For $\xi \in C_+$
let $\underline \xi: = \sum_{j=1}^n \xi_j c_j\in \Omega.$ 
Then according to Theorem VI.2.3 of \cite{FK}, 
$$ \mathcal LF(y) = c_0\!\! \int_{C_+} f(\xi)\left(\int_K e^{-(k\underline \xi\vert y)}dk \right)\!\prod_{1\leq i<j\leq n} \!(\xi_i-\xi_j)^d\, d\xi$$
where $d\in \mathbb N$ denotes the Peirce constant of $V$ and $c_0>0$ is some normalization constant depending on $V.$ 
In order to identify the  integral over $K$, we recall
the spherical (or zonal) polynomials $Z_\lambda$ on  $V$ which are indexed by partitions $\lambda\in \mathbb N_0^n$ and normalized such that for each $m\in \mathbb N_0$,
\begin{equation}\label{trace_spherical} ({\rm tr }\,x)^m = \sum_{|\lambda|=m} Z_\lambda (x) \quad (x\in V),
\end{equation}
see Section XI.5 of \cite{FK}. 
The $Z_\lambda$ are $K$-invariant and thus depend only on the eigenvalues of their argument. As such, they 
are given by Jack polynomials:
\begin{equation}\label{prod_spherical} Z_\lambda(x) = C_\lambda^{\alpha}\bigl(\text{spec}(x)\bigr) \quad \text{with } \alpha = \frac{2}{d},\end{equation}
see the notes to Chap. XI in \cite{FK}. 
Further, the $Z_\lambda$ satisfy the product formula
$$ \frac{Z_\lambda(x) Z_\lambda(y)}{Z_\lambda(e)} = \int_K Z_\lambda\left(P(\sqrt x\,) ky\right) dk \quad (x\in \Omega, y\in V)$$
where $P$ denotes the quadratic representation of $V$.
This is immediate from \cite[Corollary XI.3.2]{FK} and the fact that $P(\sqrt x)e = x.$

Now consider the type $A$ Bessel function $J_k^A$ with multiplicity $k=d/2$. Let $x\in \Omega, \,y\in V$ and $\xi = \text{spec}(x), \,\eta = \text{spec}(y).$ 
Then by relations \eqref{trace_spherical} and \eqref{prod_spherical},
\begin{align*} J_{d/2}^A(\xi, \eta) &=\, 
\sum_{\lambda\geq 0} \frac{1}{|\lambda|!}\frac{Z_\lambda(x)Z_\lambda(y)}
{Z_\lambda(e)}  \\
&=\,\int_K  e^{{\rm tr}(P(\sqrt x) ky)} dk = \int_K e^{ (x\vert ky)} dk.
\end{align*}
Therefore $\mathcal LF(y)$ depends only on  $\eta = \text{spec}(y)$ and is given by 
$$ \mathcal LF(y) = c_0\!\! \int_{C_+} f(\xi) J_{d/2}^A (-\eta,\xi)\,\omega_{d/2}(\xi)d\xi.$$
Extending $f$ to a symmetric function on $\mathbb R_+^n$, this becomes
$$ \mathcal LF(y) =  \frac{c_0}{n!} \int_{\mathbb R_+^n} f(\xi)J_{d/2}^A (-\xi,\eta)\,\omega_{d/2}(\xi)d\xi,$$
which coincides, up to a constant, with Macdonald's Laplace transform for $k=d/2.$ 
\end{remarks}

Let us now continue the study of the type $A$ Laplace transform $\mathcal L_k\,.$

\begin{lemma}\label{Laplace_1} Let $f \in L^1_{\textrm{loc}}(\mathbb R_+^n)$ and suppose that $\mathcal L_kf(a)$ exists for some $a \in \mathbb R^n$, that is
\[\int_{\mathbb R_+^n} |f(x)| E_k^A(-x,a) \,\omega_k(x)dx < \infty.\]
Then the following hold.
 \ \begin{enumerate}
 \item[\rm{(1)}] $\mathcal L_kf(z)$ exists for all $z\in \mathbb C^n$ with ${\rm Re}\, z\geq a$, and $\mathcal L_kf$ is holomorphic on the half space
 $$ H_n(a) := \{z\in \mathbb C^n: {\rm Re}\, z > a\}.$$
  \item[\rm{(2)}] If $p\in \mathbb C[\mathbb R^n]$ is a polynomial, then $\mathcal L_k(fp)(z)$ exists for all $z\in H_n(a),$ and 
 \[ p(-T(k)) (\mathcal L_kf)  = \mathcal L_k (fp)\quad \text{on }\, H_n(a).\]
 \end{enumerate}
\end{lemma}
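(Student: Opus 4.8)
The plan is to establish the existence and the differential-operator intertwining by a single dominated-convergence argument. First I would prove part (1). Given $z$ with $\operatorname{Re} z \geq a$, Lemma \ref{estim1} combined with Lemma \ref{estim2}(1) gives $|E_k^A(-x,z)| = |E_k^A(-x, (z-\underline 0))|$; more directly, writing $z = a + (z-a)$ with $\operatorname{Re}(z-a) \geq 0$ and applying Lemma \ref{estim2}(2) with the roles adjusted (here $x \in \mathbb R_+^n$, so $-x \in \overline{\mathbb{R}_-^n}$ — I should instead invoke Lemma \ref{estim2}(2) directly in the form $|E_k^A(-x,z)| \leq E_k^A(-x,a)$ whenever $\operatorname{Re} z \geq a$ and $x \in \overline{\mathbb{R}_+^n}$, which is exactly what that lemma states). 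Hence the integrand defining $\mathcal{L}_kf(z)$ is dominated in absolute value by $|f(x)| E_k^A(-x,a)\omega_k(x)$, which is integrable by hypothesis; so $\mathcal{L}_kf(z)$ exists for all such $z$. For holomorphy on the open half-space $H_n(a)$, I would fix a compact polydisc $K \subset H_n(a)$; there is $\varepsilon > 0$ with $\operatorname{Re} z \geq a + \underline{\varepsilon}$ on a neighbourhood, so the same bound $|f(x)|E_k^A(-x, a+\underline\varepsilon)\omega_k(x) \leq e^{-\varepsilon\|x\|_1}|f(x)|E_k^A(-x,a)\omega_k(x)$ (using Lemma \ref{estim2}(2) once more, factoring off $\underline\varepsilon$ via part (1)) provides a uniform integrable majorant. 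Then holomorphy follows from Morera's theorem together with Fubini, or from the standard theorem on parameter-dependent integrals, applied coordinatewise since $z \mapsto E_k^A(-x,z)$ is entire.

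For part (2), by linearity and induction on the degree it suffices to treat $p(x) = x_i$, i.e. to show $\mathcal{L}_k(x_i f) = -T_i(k)(\mathcal{L}_kf)$ on $H_n(a)$. The key computational fact is that the Dunkl kernel satisfies $T_{i,z}(k) E_k^A(-x,z) = -x_i E_k^A(-x,z)$, where $T_{i,z}(k)$ denotes the Dunkl operator acting in the $z$-variable; this follows from the defining eigenvalue property $T_i(k)E_k^A(\,\cdot\,,y) = y_i E_k^A(\,\cdot\,,y)$ together with the symmetry $E_k^A(x,y) = E_k^A(y,x)$ and homogeneity $E_k^A(-x,z) = E_k^A(x,-z)$, so that $T_{i,z}(k)$ applied to $z \mapsto E_k^A(x,-z)$ produces a factor $-x_i$. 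Granting this, I would differentiate under the integral sign: the operator $T_i(k)$ is first order (a derivative plus a difference operator in the reflections $\sigma_{ij}$), and on $H_n(a)$ the needed majorants for $\partial_{z_i} E_k^A(-x,z)$ and for the difference quotients $E_k^A(-x, z) - E_k^A(-x,\sigma_{ij}z)$ are again of the form $C(1+\|x\|_1)|f(x)|E_k^A(-x,a+\underline\varepsilon)\omega_k(x)$, integrable because the extra polynomial growth in $\|x\|_1$ is absorbed by the exponential gain $e^{-\varepsilon\|x\|_1}$ from Lemma \ref{estim2}(2). Estimating $|x_i E_k^A(-x, \sigma z)| = |x_i E_k^A(-\sigma^{-1}x, z)| \leq x_i E_k^A(-\sigma^{-1}x, a)$, and noting that $\int_{\mathbb R_+^n}|f(x)| x_i E_k^A(-\sigma^{-1}x,a)\omega_k(x)dx$ is controlled — this requires a small argument since $f$ need not be symmetric — I get both that $\mathcal{L}_k(x_i f)(z)$ exists and that differentiation under the integral is legitimate, yielding $T_i(-T(k))(\mathcal{L}_k f)(z) = \int_{\mathbb R_+^n} f(x)\bigl(-x_i E_k^A(-x,z)\bigr)\omega_k(x)\,dx = -\mathcal{L}_k(x_i f)(z)$, as desired. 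Here the compatibility of the sign is exactly why the statement reads $p(-T(k))$.

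The main obstacle I anticipate is the dominated-convergence / differentiation-under-the-integral step in part (2) for non-symmetric $f$: the reflection part of $T_i(k)$ evaluates the kernel at $\sigma_{ij}z$, and one must check that the hypothesis ``$\mathcal{L}_kf(a)$ exists'', i.e. $\int |f(x)| E_k^A(-x,a)\omega_k(x)dx < \infty$, still yields an integrable majorant after the reflection has been moved from the $z$-variable onto the $x$-variable (producing $E_k^A(-\sigma x, a)$, which need not be $\leq E_k^A(-x,a)$ pointwise). The resolution is that for $z \in H_n(a)$ we have $\operatorname{Re} z \geq a + \underline\varepsilon$ locally, and $\operatorname{Re}(\sigma_{ij}z) \geq a'$ where $a'$ is $a+\underline\varepsilon$ with two coordinates swapped, but $\min_i a'_i = \min_i(a_i+\varepsilon) = \varepsilon + \min a_i$; applying Lemma \ref{estim2}(2) in its first, sharper form $|E_k^A(-x,z)| \leq E_k^A(-x,a') \leq \exp(-\|x\|_1\min a'_i)$ gives a bound depending only on $\min_i a_i$ and $\|x\|_1$, and a crude comparison $E_k^A(-x,a') \leq e^{(\max_i|a_i| - \min_i a_i)\|x\|_1}E_k^A(-x,a) \cdot e^{?}$ — more cleanly, one notes $E_k^A(-x, b)\le e^{-\langle x, \underline{c}\rangle}$ reduces everything to the single scalar $c = \min_i a_i$, so the reflected integrands are all dominated by $e^{-\varepsilon\|x\|_1/2}\,e^{-(\min a_i)\|x\|_1}|f(x)|\omega_k(x)$ up to the polynomial factor from the derivative, and the latter is integrable since $E_k^A(-x,a) \geq e^{-(\max_i a_i)\|x\|_1}$ only gives a lower bound — so I would instead argue directly that $\int_{\mathbb{R}_+^n} |f(x)| e^{-\varepsilon\|x\|_1} (1+\|x\|_1) E_k^A(-x,a)\,\omega_k(x)\,dx < \infty$ and that the reflected kernel $E_k^A(-\sigma x, a)$ is, by Lemma \ref{estim2}(2), bounded by $\exp(-\|x\|_1 \min_i a_i)$ which is in turn $\le e^{\|x\|_1(\max_i a_i - \min_i a_i)} E_k^A(-x,a)$ whenever $E_k^A(-x,a)\ge e^{-\|x\|_1\max_i a_i}$ — ah, this last inequality always holds by the positive integral representation. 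This is the one place where care with the multidimensional ordering $\leq$ and the non-symmetry of $f$ is genuinely needed; everything else is routine.
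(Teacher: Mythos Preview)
Your overall strategy is the paper's: part~(1) via the monotonicity estimate of Lemma~\ref{estim2}(2) and standard parameter-integral arguments, part~(2) by induction on the degree of $p$, using the eigenvalue relation $T_{i,z}(k)E_k^A(-x,z)=-x_iE_k^A(-x,z)$ and the exponential gain $|E_k^A(-x,z)|\le e^{-\varepsilon\|x\|_1}E_k^A(-x,a)$ on $\{\operatorname{Re}z\ge a+\underline\varepsilon\}$ to absorb the polynomial factor. The paper's proof is extremely terse here, simply writing ``differentiation under the integral gives'' the displayed identity.

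Your long third paragraph addresses a genuine point that the paper's proof leaves implicit: the reflection part of $T_i(k)$ evaluates $\mathcal L_kf$ at $\sigma_{ij}z$, and for non-symmetric $a$ one has no a~priori guarantee that $\sigma_{ij}z\in\overline{H_n(a)}$. However, your attempted direct-estimate fix does not close the gap on all of $H_n(a)$. The inequality you reach, $E_k^A(-x,\sigma a)\le e^{M\|x\|_1}E_k^A(-x,a)$ with $M=\max_i a_i-\min_i a_i$, only combines usefully with the $e^{-\varepsilon\|x\|_1}$ factor when $\varepsilon>M$, i.e.\ only on the subdomain $\{\operatorname{Re}z>a+\underline M\}$. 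The cleanest resolution is to stop trying to estimate and instead argue structurally: restrict first to the $S_n$-invariant subdomain $H_n(\underline{\max_i a_i})\subset H_n(a)$, where all reflected points remain in the domain and the interchange is immediate; then observe that the main assertion of part~(2)---that $\mathcal L_k(fp)$ exists and is holomorphic on all of $H_n(a)$---is already established by the polynomial-killing bound, so the identity extends to $H_n(a)$ by analytic continuation of its right-hand side. In the paper's applications (symmetric $f$ or $a$) the issue is moot anyway. So your plan is sound, but you are working harder than necessary on a technicality that is better dispatched by analytic continuation than by pointwise estimates.
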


 \begin{proof} Part (1) is immediate from Lemma \ref{estim2}(2) and standard theorems for
 holomorphic parameter integrals. For part (2), let $z\in H_n(a)$ and choose $\epsilon >0$ such that $\text{Re}\,z > a + \underline \epsilon. $ 
 Then for $x \in \mathbb R_+^n$ we have  $\,|E_k^A(-x,z)|\leq e^{-\epsilon \|x\|_1} E_k^A(-x,a),$ due to Lemma \ref{estim2}. 
 This implies that $\mathcal L_k(fp)(z)$ exists for each polynomial $p$, and   differentiation under the integral gives
  \[ -T_{e_i}(k)\mathcal L_kf(z) = \int_{\mathbb R_+^n} x_if(x) E_k^A(-x,z)\omega_k(x)dx, \quad 1\leq i \leq n.\]
  The statement now follows by induction.
 \end{proof}
  
\begin{example} Suppose that $f$ is measurable on $\mathbb R_+^n $  and exponentially bounded according to
 \[|f(x)| \leq C e^{s \|x\|_1}\] with constants $C>0$ and $s\in \mathbb R$. Then 
 by Lemma \ref{estim2}(2), $\mathcal L_kf(z)$ exists for all $z\in H_n(\underline s).$
 \end{example}

  We continue with some further elementary properties of the Laplace transform:
 \begin{lemma}Suppose that $f$ is measurable on $\mathbb R_+^n$ with $\,|f(x)|\leq C \cdot e^{s \|x\|_1}$ for some $s \in \mathbb R$. Then
  \begin{enumerate}\itemsep=1pt
  \item[\rm{(1)}] For $z\in H_n(0)$, 
   $ \mathcal L_k\bigl(e^{-\langle x, \underline s\rangle} f\bigr)(z) = \mathcal L_k(f)(z+\underline s).$ 
    \item[\rm{(2)}] Let $y\in \mathbb R^n$. Then $\mathcal L_kf(x+iy) \,\longrightarrow \, 0\, \text{ as }\min x_i \to \infty.$
 \item[\rm{(3)}] Let $x > \underline s$. Then $\mathcal L_kf(x+iy) \,\longrightarrow \, 0\,\, \text{ as } \min y_i \to \infty.$
    \end{enumerate}
\end{lemma}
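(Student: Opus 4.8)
The plan is to prove the three limit statements by direct estimation of the defining integral, using the exponential decay from Lemma~\ref{estim2}(2) together with dominated convergence. For all three parts, fix $\epsilon \in (0,\infty)$; since $f$ is exponentially bounded with rate $s$, the function $e^{-\langle x,\underline{s}+\underline{\epsilon}\rangle}f(x)$ is bounded, and by Lemma~\ref{estim2}(1) we may write, for $\mathrm{Re}\,z \geq \underline{s}+\underline{\epsilon}$,
\[
 \mathcal L_kf(z) = \int_{\mathbb R_+^n} \bigl(e^{-\langle x,\underline{s}+\underline{\epsilon}\rangle}f(x)\bigr)\, E_k^A(-x, z - \underline{s}-\underline{\epsilon})\,\omega_k(x)\,dx,
\]
where now $\mathrm{Re}(z-\underline{s}-\underline{\epsilon}) \geq 0$, so by Lemma~\ref{estim2}(2) the Dunkl kernel factor is bounded by $E_k^A(-x,\mathrm{Re}\,z - \underline{s}-\underline{\epsilon})$, hence in particular bounded by $1$ whenever $\mathrm{Re}\,z \geq \underline{s}+\underline{\epsilon}$. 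This reduces everything to controlling $|E_k^A(-x,w)|$ for $w = z - \underline{s}-\underline{\epsilon}$ with $\mathrm{Re}\,w$ large or with large imaginary part.

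For part (2): take $z = x+iy$ with $x \to \infty$ coordinatewise. Shifting as above with $s$ replaced appropriately, write $\mathcal L_kf(x+iy)$ as an integral with integrand dominated by $g(\xi)\,e^{-\langle \xi', a\rangle}$ for a fixed integrable majorant, where one splits off a factor $E_k^A(-\xi, \underline{t})= e^{-t\|\xi\|_1}$ (Lemma~\ref{estim2}(1) again, using that $\langle\xi,\underline 1\rangle = \|\xi\|_1$ on $\mathbb R_+^n$) with $t = \min_i x_i - s - \epsilon \to \infty$. Thus the integrand is pointwise bounded by $C'e^{-t\|\xi\|_1}\to 0$ for each fixed $\xi\in\mathbb R_+^n$ and is dominated for $t\geq 1$ by $C'e^{-\|\xi\|_1}$ times the fixed integrable function, so dominated convergence gives $\mathcal L_kf(x+iy)\to 0$.

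For part (3): now $x > \underline{s}$ is fixed and $\min_i y_i \to \infty$. Choose $\epsilon>0$ with $x \geq \underline{s}+\underline{\epsilon}$. After the shift, the integrand is $h(\xi)\,E_k^A(-\xi, w)$ with $w = (x - \underline{s}-\underline{\epsilon}) + iy$, $\mathrm{Re}\,w \geq 0$, and $h \in L^1(\mathbb R_+^n,\omega_k)$. We have $|E_k^A(-\xi,w)| \leq 1$ uniformly, so dominated convergence applies once we show $E_k^A(-\xi,w)\to 0$ pointwise in $\xi\in\mathbb R_+^n$ as $\min y_i\to\infty$. By the integral representation \eqref{int_repRo}, $E_k^A(-\xi,w) = \int_{C(\xi)} e^{-\langle\eta,\mathrm{Re}\,w\rangle} e^{-i\langle\eta,y\rangle}\,d\mu_\xi^k(\eta)$; for $\xi$ with all coordinates strictly positive this is the Fourier transform at $y$ of an absolutely continuous-in-direction... more carefully, one uses that $\mu_\xi^k$ is supported in $C(\xi)\subset\mathbb R_+^n$ and for $\xi$ in the interior the oscillatory factor $e^{-i\langle\eta,y\rangle}$ forces decay. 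The clean route: by Lemma~\ref{estim2}(1), $E_k^A(-\xi,w) = e^{-i r\|\xi\|_1}E_k^A(-\xi, w')$ where $r = \min_j y_j$ and $w' = w - i\underline{r}$ has $\mathrm{Im}\,w' \geq 0$; since $|E_k^A(-\xi,w')| \le 1$, we get $|E_k^A(-\xi,w)| = |E_k^A(-\xi,w')|$, which does \emph{not} immediately decay --- so instead I invoke part (2) applied to the function $\tilde f(\xi) = e^{-\langle\xi,\underline s\rangle}f(\xi)$ after interchanging the roles, OR, cleanest of all, appeal to the Riemann--Lebesgue lemma for the Dunkl transform: writing $\mathcal L_kf(x+iy)$ as a Dunkl-type transform of the integrable function $\xi\mapsto f(\xi)E_k^A(-\xi, x)\mathbf 1_{\mathbb R_+^n}(\xi)\omega_k(\xi)$ evaluated at $y$ (up to sign/scaling), its vanishing as $\|y\|\to\infty$ --- in particular as $\min y_i \to\infty$ --- follows from the Riemann--Lebesgue property of the Dunkl transform on $L^1(\mathbb R^n,\omega_k)$.

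The main obstacle is part (3): one must convert the ``$\min y_i \to \infty$'' oscillatory decay into a genuine Riemann--Lebesgue statement, and the subtlety is that $|E_k^A(-\xi, x+iy)|$ is only bounded, not decaying, uniformly in $\xi$, so pointwise decay of the kernel in $\xi$ (for $\xi$ in the open orthant) is what must be extracted --- either from the integral representation \eqref{int_repRo} by noting $\mu_\xi^k$ has a component that genuinely spreads in all coordinate directions when $\xi$ is interior, or, more economically, by recognizing the integral as a value of the Dunkl transform of an $L^1(\omega_k)$ function and quoting Riemann--Lebesgue. I would use the latter, citing \cite{dJ}. Parts (1) and (2) are routine given Lemma~\ref{estim2}.
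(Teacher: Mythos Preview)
Your treatment of parts (1) and (2) is correct and matches the paper's argument: both use Lemma~\ref{estim2}(1) to extract a scalar exponential factor and then apply dominated convergence.

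For part (3), however, your final approach has a genuine error. You propose to write $\mathcal L_kf(x+iy)$ as the Dunkl transform of $\xi\mapsto f(\xi)E_k^A(-\xi,x)\mathbf 1_{\mathbb R_+^n}(\xi)$ evaluated at $y$ and then quote Riemann--Lebesgue for the Dunkl transform. But this would require the factorization
\[
E_k^A(-\xi,\,x+iy)=E_k^A(-\xi,x)\,E_k^A(-\xi,iy),
\]
which is false: the Dunkl kernel is \emph{not} multiplicative in the second argument. The only multiplicativity available is the shift along $\underline 1$ in Lemma~\ref{estim2}(1), and that is exactly what the paper exploits.

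In fact you had already found the right decomposition and discarded it. Writing $y=y'+\underline\eta$ with $\eta=\min_i y_i$ and $y'\geq 0$, Lemma~\ref{estim2}(1) gives
\[
\mathcal L_kf(x+iy)=\int_{\mathbb R_+^n} f(u)\,E_k^A(-u,\,x+iy')\,e^{-i\eta\langle u,\underline 1\rangle}\,\omega_k(u)\,du.
\]
You observed that $|E_k^A(-u,x+iy')|$ does not decay pointwise in $u$ as $\eta\to\infty$, and from this concluded the route was a dead end. The point you missed is that one should not look for pointwise decay of the kernel here: the factor $e^{-i\eta\langle u,\underline 1\rangle}$ is an ordinary exponential, so the whole integral is a \emph{classical} Fourier integral in the scalar parameter $\eta$, with integrand dominated (uniformly in $y'\geq 0$) by $|f(u)|E_k^A(-u,x)\omega_k(u)\in L^1$ since $x>\underline s$. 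The classical Riemann--Lebesgue lemma then gives the conclusion. This is precisely the paper's argument.
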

 
 \begin{proof} (1) This is obvious from Lemma \ref{estim2}(1) (c.f. also \cite{BF3}).
 
  (2)  Write $x = x^\prime + \underline \xi$ with $\xi = \min x_i$ and $x^ \prime \geq 0.$ 
 Then 
 \[\mathcal L_kf(x+iy) = \int_{\mathbb R_+^n} e^{-\langle u, \underline \xi\rangle} E_k^A(-u, x^\prime +iy)f(u) \omega_k(u)du\]
 where
\[
 \big|e^{-\langle u, \underline \xi\rangle} E_k^A(-u, x^\prime +iy)f(u)\big|\, \leq C\cdot 
  e^{(s-\xi)\|u\|_1} E_k^A(-u, x^\prime)\,\leq \,
    C\cdot e^{(s-\xi)\|u\|_1}.
\]
As $\xi\to \infty$, the dominated convergence theorem yields the assertion.

 (3) As above, write $y = y^\prime +\underline \eta$ with $\eta = \min y_i$ and $y^\prime \geq 0.$ Then
 $$ \mathcal L_kf(x+iy) = \int_{\mathbb R_+^n} f(u) E_k^A(-u, x+iy^\prime)
 e^{-i\langle u, \underline\eta\rangle} \omega_k(u)du.$$
 The statement now follows from Lemma 3.1 together with the Riemann-Lebesgue Lemma for the classical Fourier transform.

\end{proof}

Our next result is a Cauchy-type inversion theorem for the Laplace transform.
We extend the weight function $\omega_k$ to $\mathbb C^n$ by 
\[ \omega_k(z):= \prod_{1\leq i<j\leq n} |z_i-z_j|^{2k}.\]

\begin{theorem} Suppose that $\mathcal L_kf(\underline s)$ exists  for some $s\in \mathbb R$, and 
 that $$y\mapsto \mathcal L_kf(\underline s + iy)\in L^1(\mathbb R^n, \omega_k).$$
Then $f$ has a continuous representative $f_0$, and
\begin{equation}\label{inv1} \frac{(-i)^n}{c_k^2} \int_{{\rm Re}\, z =\underline s}  
\mathcal L_kf(z) E_k^A(x,z)\, \omega_k(z) dz\,= \, \begin{cases} f_0(x) &\,\text{if } x > 0\\
    0 & \>\>\text{otherwise.}\end{cases}
\end{equation}
Here $dz$ is understood as an $n$-fold line integral. 

\end{theorem}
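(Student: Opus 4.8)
The plan is to reduce the inversion formula to the $L^1$-inversion theorem for the Dunkl transform stated in Section \ref{Dunkl}. First I would reduce to the case $s=0$: by the shift identity of Lemma \ref{estim2}(1), writing $g(x) := e^{-\langle x,\underline s\rangle} f(x)$ (extended by $0$ off $\mathbb R_+^n$), one has $\mathcal L_k g(z) = \mathcal L_k f(z+\underline s)$ on $H_n(0)$, so $\mathcal L_k g(iy) = \mathcal L_k f(\underline s + iy)$; and since $E_k^A(x,z+\underline s) = e^{\langle x,\underline s\rangle}E_k^A(x,z)$, the integral identity for $f$ at level $\mathrm{Re}\,z=\underline s$ is equivalent to the one for $g$ at level $\mathrm{Re}\,z = 0$. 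So assume $\mathcal L_k f(0)$ exists, i.e. $f\in L^1(\mathbb R_+^n,\omega_k)$ after extending by zero, and $y\mapsto \mathcal L_k f(iy)\in L^1(\mathbb R^n,\omega_k)$.

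Next I would identify the Laplace transform on the imaginary axis with a Dunkl transform. Let $F$ be $f$ extended by $0$ to all of $\mathbb R^n$; then $F\in L^1(\mathbb R^n,\omega_k)$ and, by the definition of $\widehat{\ \cdot\ }^{\,k}$ and the symmetry $E_k^A(x,y)=E_k^A(y,x)$,
\[ \widehat F^{\,k}(y) \;=\; \frac{1}{c_k}\int_{\mathbb R_+^n} f(x)\, E_k^A(x,-iy)\,\omega_k(x)\,dx \;=\; \frac{1}{c_k}\,\mathcal L_k f(-iy), \]
using $E_k^A(x,-iy) = E_k^A(-x,iy)$ after the substitution accounting for the sign, or more directly noting $\mathcal L_k f(iy) = \int f(x) E_k^A(-x,iy)\omega_k(x)dx = c_k\,\widehat F^{\,k}(-y)$. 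Thus the hypothesis $y\mapsto \mathcal L_k f(iy)\in L^1(\mathbb R^n,\omega_k)$ says exactly that $\widehat F^{\,k}\in L^1(\mathbb R^n,\omega_k)$. The $L^1$-inversion theorem then gives that $F$ has a continuous representative $F_0$ with $F_0(x) = (\widehat F^{\,k})^{\wedge k}(-x)$ for a.e.\ $x$; writing this out and translating back through $\widehat F^{\,k} = c_k^{-1}\mathcal L_k f(-i\,\cdot\,)$ and the homogeneity $E_k^A(x,-iy) = E_k^A(-ix,y)$, a change of variables $z = iy$ turns the $y$-integral over $\mathbb R^n$ into the $n$-fold line integral over $\mathrm{Re}\,z = 0$, producing the Jacobian factor $(-i)^n$ (from $dz = i^n dy$, hence $dy = (-i)^n dz$) and a second factor $c_k^{-1}$ from the inverse transform, giving the constant $(-i)^n/c_k^2$. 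Continuity of $F_0$ on all of $\mathbb R^n$, together with $F = 0$ off $\overline{\mathbb R_+^n}$ and $F = f$ on $\mathbb R_+^n$, forces $F_0 = 0$ on the complement of $\mathbb R_+^n$ and $F_0 = f$ (a.e., hence as the continuous representative $f_0$) on $\mathbb R_+^n$; this yields the case distinction in \eqref{inv1}.

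The main obstacle I anticipate is bookkeeping rather than a genuine difficulty: getting every sign and complex-conjugation right when matching $E_k^A(-x,z)$ on $\mathrm{Re}\,z=\underline s$ against the Dunkl-transform kernel $E_k^A(x,-iy)$, and correctly tracking the constant through both the forward and inverse transforms, including the factor $i^n$ from parametrizing the line integral and the behavior of $\omega_k$ under $z\mapsto iy$ (note $\omega_k(iy) = \omega_k(y)$ since $|iy_i - iy_j| = |y_i-y_j|$, so the weight transfers cleanly). A secondary point requiring care is the reduction step: one must check that extending $f$ (resp.\ $g$) by zero keeps it in $L^1(\mathbb R^n,\omega_k)$ — which is exactly the content of the hypothesis that $\mathcal L_k f(\underline s)$ exists, since $E_k^A(-x,\underline s) = e^{-s\|x\|_1} \geq e^{-s\|x\|_1}>0$ is bounded below on bounded sets and the integrability is over all of $\mathbb R_+^n$ — and that the "otherwise" region in \eqref{inv1} is handled by continuity of the representative, there being no positivity or support subtlety beyond that. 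Once the dictionary between $\mathcal L_k$ on vertical lines and $\widehat{\ \cdot\ }^{\,k}$ is fixed, the theorem follows directly from the quoted $L^1$-inversion theorem for the Dunkl transform.
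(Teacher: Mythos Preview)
Your proposal is correct and follows essentially the same route as the paper: both arguments identify $e^{-\langle x,\underline s\rangle}\widetilde f$ (your $g$, the paper's $F$) as an $L^1(\mathbb R^n,\omega_k)$ function whose Dunkl transform is $c_k^{-1}\mathcal L_kf(\underline s+iy)$, and then invoke the $L^1$-inversion theorem for the Dunkl transform. The only cosmetic difference is that the paper carries $s$ throughout rather than first reducing to $s=0$; your anticipated sign bookkeeping is indeed the only place requiring care (for the record, $\widehat F^{\,k}(y)=c_k^{-1}\mathcal L_kf(\underline s+iy)$ with no sign flip in $y$, via $E_k^A(x,-iy)=E_k^A(-x,iy)$).
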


\begin{proof} 
Lemma \ref{Laplace_1} assures that $\mathcal L_kf(\underline s+iy)$ indeed exists for all $y\in \mathbb R^n$.
By Lemma \ref{estim2},
the left-hand side of \eqref{inv1} can also be written as
\[\frac{1}{c_k^2}\, e^{\langle x, \underline s\rangle}\int_{\mathbb R^n} \mathcal L_kf(\underline s + iy) E_k^A(x,iy)\, \omega_k(y)dy.\]
This integral is absolutely convergent by our assumption.
Extend $f$ to $\mathbb R^n$ by 
\[ \widetilde f(x):= \begin{cases} f(x) & \text{ if }\, x > 0\\
                       0 & \text{ otherwise}
          \end{cases}\]
                   and put $F(x)= e^{-\langle x, \underline s\rangle} \widetilde f(x).$
As $e^{-\langle x, \underline s\rangle}= E_k^A(-x, \underline s),$ we have $F\in L^1(\mathbb R^n, \omega_k).$ In view of Lemma \ref{estim2}, the Dunkl transform of $F$ is given by
\[ \widehat F^{\,k}(y) = \frac{1}{c_k} \int_{\mathbb R_+^n} e^{-\langle x, \underline s\rangle }f(x) E_k^A(-ix,y)\omega_k(x)dx \,=\, \frac{1}{c_k}\mathcal L_kf(\underline s + iy), \,\, y\in \mathbb R^n.
\]
From this relation, our assumption and the $L^1$-inversion theorem for the Dunkl transform  it follows that $F$ has a continuous representative with
\[ c_k^2 F(x)= c_k\int_{\mathbb R^n} \widehat F^{\, k}(y) E_k^A(ix,y)\omega_k(y)dy \, = \, \int_{\mathbb R^n} \mathcal L_kf(\underline s + iy) E_k^A(iy,x) \omega_k(y)dy.\] 
Hence  $\widetilde f$ has a continuous representative as well, satisfying
 \begin{align*}\widetilde f(x) &= \frac{1}{c_k^2}\int_{\mathbb R^n} \mathcal L_kf(\underline s + iy) E_k^A(x,\underline s +iy) \omega_k(\underline s+iy) dy\\
                               & = \frac{(-i)^n}{c_k^2} \int_{{\rm Re}\, z =\underline s}  \mathcal L_kf(z) 
                               E_k^A(x,z) \omega_k(z) dz.
 \end{align*}
This implies the assertion.
\end{proof}

As an immediate consequence of this theorem, we obtain

\begin{corollary}[Injectivity of the Laplace transform] Let $f\in L_{\textrm{loc}}^1(\mathbb R_+^n,\omega_k)$ and $s\in \mathbb R$ such that 
$\mathcal L_kf(\underline s)$ 
exists and $\,\mathcal L_kf(\underline s+iy) =0\,$ for all $y\in \mathbb R^n.$ Then $f= 0$ a.e. 
\end{corollary}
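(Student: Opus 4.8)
The plan is to deduce this corollary directly from the preceding Cauchy-type inversion theorem. Suppose $f\in L^1_{\mathrm{loc}}(\mathbb R_+^n,\omega_k)$ and $s\in\mathbb R$ are such that $\mathcal L_kf(\underline s)$ exists and $\mathcal L_kf(\underline s+iy)=0$ for all $y\in\mathbb R^n$. Then trivially the function $y\mapsto \mathcal L_kf(\underline s+iy)$ is the zero function, which in particular lies in $L^1(\mathbb R^n,\omega_k)$. Hence the hypotheses of the inversion theorem are satisfied.

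Applying the inversion theorem, we conclude that $f$ has a continuous representative $f_0$ for which the left-hand side of \eqref{inv1} computes $f_0(x)$ when $x>0$ and $0$ otherwise. But the integrand on the left-hand side contains the factor $\mathcal L_kf(z)$, which vanishes identically on the contour $\mathrm{Re}\,z=\underline s$ by hypothesis. Therefore the integral is $0$, and so $f_0(x)=0$ for all $x$ with $x>0$, i.e.\ on the open positive orthant $\mathbb R_+^n$. Since $\mathbb R_+^n$ is exactly the domain under consideration and $f_0$ is a representative of $f$, we get $f=0$ a.e.

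There is essentially no obstacle here: the corollary is an immediate specialization of the inversion theorem to the case where the transform vanishes on one vertical hyperplane. The only point worth a moment's care is checking that the hypothesis $\mathcal L_kf(\underline s+iy)=0$ for all real $y$ indeed places us in the setting of the theorem — the $L^1(\mathbb R^n,\omega_k)$-integrability requirement is automatic since the zero function is integrable against any positive weight, and Lemma \ref{Laplace_1}(1) already guarantees that $\mathcal L_kf(\underline s+iy)$ is well-defined for every $y\in\mathbb R^n$ once $\mathcal L_kf(\underline s)$ exists. Thus the proof is a one-line application of the theorem.
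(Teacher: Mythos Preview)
Your proof is correct and follows exactly the approach intended in the paper, which presents the corollary as an immediate consequence of the Cauchy-type inversion theorem without further argument. You have simply made explicit the trivial observation that the vanishing hypothesis forces the $L^1$-integrability condition of the theorem to hold, after which the inversion formula yields $f_0\equiv 0$ on $\mathbb R_+^n$.
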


\begin{remark} Following a method of Yan \cite{Y}, Baker and Forrester \cite{BF3}  proved a  weaker injectivity result 
 for the Laplace transform $\mathcal L_k$ on a certain weighted $L^2$-space by applying suitable Dunkl operators to 
 the Laplace integral.
 \end{remark}

 \section{The type $A$ Laplace transform of tempered distributions}\label{dist}

In this section, we define the  Laplace transform of tempered distributions
in the Dunkl setting.  We follow the classical 
approach, see e.g. \cite{V}. Denote by $\mathcal S(\mathbb R^n)$ 
the classical Schwartz space of rapidly decreasing functions on $\mathbb R^n$ and by $\mathcal S^\prime(\mathbb R^n)$ the space
of tempered distributions on $\mathbb R^n.$ 
Let further 
 $$\mathcal S_+^\prime(\mathbb R^n) = \{u\in \mathcal S^\prime(\mathbb R^n): \textrm{supp}\, u\subseteq \overline{\mathbb R_+^n}\,\}$$
denote the set of tempered distributions supported in $\overline{\mathbb R_+^n}\,$. In order to define the Laplace transform of $u\in \mathcal S_+^\prime(\mathbb R^n),$ choose a 
cutoff  function  $\chi\in C^\infty(\mathbb R^n)$ with $\,\textrm{supp}\, \chi \subseteq ]-\epsilon, \infty[^n $ for some $\epsilon >0$ and $\chi(x) = 1$ in a neighborhood
of $\overline{\mathbb R_+^n}.$ 

\begin{lemma}\label{Dunkl_Schwartz} For each $z\in H_n(0)$ the function $\,x\mapsto \chi(x) E_k^A(x, -z)$ belongs to 
$\mathcal S(\mathbb R^n)$. 
\end{lemma}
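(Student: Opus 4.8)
The plan is to show that $\chi(x) E_k^A(x,-z)$ is a Schwartz function by establishing, for every multi-index $\beta$, rapid decay of $\partial^\beta\bigl(\chi(x)E_k^A(x,-z)\bigr)$ on all of $\mathbb R^n$. Since $\chi$ is smooth with support in $]-\epsilon,\infty[^n$, by the Leibniz rule it suffices to control $\partial^\beta E_k^A(x,-z)$ for $x$ ranging over the region $]-\epsilon,\infty[^n$ (outside $\mathrm{supp}\,\chi$ the product vanishes), and in fact only the part where some coordinate is large matters, because on the compact transition region where $\chi$ is not locally constant everything is bounded. The two ingredients I would combine are: (i) the exponential decay estimate of Lemma \ref{estim2}(2), which for $z\in H_n(0)$, i.e. $\mathrm{Re}\,(-z)=-\mathrm{Re}\,z$ with $\mathrm{Re}\,z>0$, gives $|E_k^A(x,-z)|\le e^{-s\|x\|_1}$ for $x\in\overline{\mathbb R_+^n}$ once $\mathrm{Re}\,z\ge\underline s$ with $s>0$; and (ii) a mechanism to pass from a bound on $E_k^A$ itself to bounds on its derivatives.

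For the derivatives, the clean route is to use the eigenfunction property: $T_i(k)E_k^A(\cdot,-z)=-z_i\,E_k^A(\cdot,-z)$, and more generally $p(T(k))E_k^A(\cdot,-z)=p(-z)E_k^A(\cdot,-z)$ for any polynomial $p$. Writing $T_i(k)=\partial_i + k\sum_{j\ne i}\tfrac{1}{x_i-x_j}(1-\sigma_{ij})$, one can express ordinary partial derivatives $\partial^\beta$ in terms of the $T_i(k)$ plus lower-order reflection terms with coefficients that are smooth on the relevant region. A slightly more robust variant, which avoids worrying about the singularities of $T_i(k)$ on the diagonals, is to use instead the integral representation \eqref{int_repRo}: differentiating under the integral sign,
\[
\partial^\beta_x E_k^A(x,-z)=\int_{C(x)} (-z)^\beta\, e^{-\langle\xi,z\rangle}\,d\mu_x^k(\xi)\,\cdots
\]
is not quite legitimate since $\mu_x^k$ depends on $x$, but one can instead start from the known fact (e.g. \cite{R2}, or deducible from holomorphy in the spectral variable) that $\partial^\beta_x E_k^A(x,y)$ is, for fixed $x$, an entire function of $y$ of exponential type with exponential rate governed by $\max_{\sigma\in S_n}\langle\sigma x,\mathrm{Re}\,y\rangle$; applying this with $y=-z$, $\mathrm{Re}\,(-z)<\underline 0$ on $H_n(0)$, and $x\in\overline{\mathbb R_+^n}$, one gets $|\partial^\beta_x E_k^A(x,-z)|\le C_\beta(z)\,(1+\|x\|)^{N_\beta}\,e^{-s\|x\|_1}$. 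Combined with the boundedness of all derivatives of $\chi$ and the compactness of the region where $\chi$ or its derivatives are ``active'' away from $\overline{\mathbb R_+^n}$, this yields that $\partial^\beta\bigl(\chi E_k^A(\cdot,-z)\bigr)$ is bounded by a rapidly decreasing function, hence the product lies in $\mathcal S(\mathbb R^n)$.

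I expect the main obstacle to be precisely the passage from the zeroth-order estimate of Lemma \ref{estim2}(2) to uniform-in-$x$ polynomial-times-exponential bounds on the higher $x$-derivatives of $E_k^A(x,-z)$; the cleanest justification is to invoke Cauchy's integral formula in the $x$-variable on a small polydisc around each real $x$, using holomorphy of $E_k^A$ on $\mathbb C^n\times\mathbb C^n$ together with Lemma \ref{estim1} to bound $|E_k^A(\cdot,-z)|$ on that polydisc in terms of $E_k^A(\mathrm{Re}\,\cdot,-\mathrm{Re}\,z)$, which in turn is $\le e^{-s'\|\mathrm{Re}\,x\|_1}$ for a slightly smaller $s'>0$ when $\mathrm{Re}\,x$ is large. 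This gives $|\partial^\beta_x E_k^A(x,-z)|\le C_\beta\, e^{-s'\|x\|_1}$ for $x$ in a neighborhood of $\overline{\mathbb R_+^n}$, with no polynomial factor even needed. After that the Schwartz property follows by the Leibniz rule and the boundedness of $\chi$ and all its derivatives, completing the proof.
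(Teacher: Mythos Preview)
Your overall strategy—reducing to derivative bounds for $E_k^A(x,-z)$ on $]-\epsilon,\infty[^n$ and then invoking Leibniz—is the right one, and you correctly identify the crux as controlling $\partial_x^\beta E_k^A(x,-z)$. However, the Cauchy-estimate route you single out as ``cleanest'' has a genuine gap. Cauchy's formula in the $x$-variable requires you to bound $|E_k^A(\zeta,-z)|$ for \emph{complex} $\zeta$ on a polydisc around the real point $x$. You propose to do this via Lemma~\ref{estim1}, but that lemma (and the underlying integral representation \eqref{int_repRo}) is stated and proved only for a \emph{real} first argument; the measure $\mu_x^k$ is not defined for complex $x$, and symmetry $E_k^A(\zeta,-z)=E_k^A(-z,\zeta)$ does not help since $-z$ is not real either. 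So the step ``bound $|E_k^A(\cdot,-z)|$ on the polydisc in terms of $E_k^A(\mathrm{Re}\,\cdot,-\mathrm{Re}\,z)$'' is not justified by the tools at hand.

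The paper's proof bypasses this by quoting directly the derivative estimates of de~Jeu \cite{dJ},
\[
|\partial_x^\nu E_k^A(x,z)|\le C_\nu\,\|z\|_\infty^{|\nu|}\,e^{\max_{\sigma\in S_n}\langle\sigma x,\,\mathrm{Re}\,z\rangle}\qquad(x\in\mathbb R^n,\ z\in\mathbb C^n),
\]
with constants $C_\nu$ independent of $x$. This is precisely the ``known fact'' you allude to under your option (iii), but with the crucial uniformity in $x$ made explicit; the paper simply takes it as input rather than trying to re-derive it. The remaining work is then an elementary inequality: for $x>-\underline\epsilon$ and $\mathrm{Re}\,z\ge\underline s>0$ one checks $\langle\sigma x,\mathrm{Re}\,z\rangle\ge s\sum_i x_i-\epsilon\|\mathrm{Re}\,z-\underline s\|_1$ for every $\sigma\in S_n$, which immediately gives $|\partial_x^\nu E_k^A(x,-z)|\le C_\nu(z)\,e^{-s\sum_i x_i}$ on the support of $\chi$. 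This also handles the small negative-coordinate region $]-\epsilon,0]$ that your sketch treats only informally. In short: drop the Cauchy detour and invoke the de~Jeu bounds directly, paying attention to the $-\epsilon$ margin.
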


\begin{proof}
We use the following estimates from \cite{dJ} for the partial derivatives of the Dunkl kernel:
There are constants $C_\nu>0, \, \nu \in \mathbb N_0^n,$ such that for all $x\in \mathbb R^n$ and $ z\in \mathbb C^n,$
\begin{equation}\label{estimableitung} \vert \partial_x^\nu E_k^A(x,z)\vert 
\leq C_\nu \cdot \|z\|_\infty^{\,|\nu|}\,e^{\max_{\sigma\in S_n}\langle \sigma x, \text{Re}\, z\rangle}.\end{equation}
A short computation shows that 
for $x\in \mathbb R^n$ with $x >-\underline \epsilon,\, z\in \mathbb C^n$ with $\,\text{Re} \,z \geq \underline s >0$ 
and $\sigma\in S_n$, 
$$ \langle \sigma x\,,\text{Re} \,z\rangle  \geq s\cdot\sum_{i=1}^n x_i \,-\, \epsilon\cdot \|\text{Re}\, z - \underline s\|_1\,.$$ 
Therefore
\begin{equation}\label{decay} \vert\partial_x^\nu E_k(x,-z)\vert \leq C_\nu \|z\|_\infty^{\,|\nu|}\,
e^{\,\epsilon \|\text{Re} \, z - \underline s\|_1}\cdot e^{-s\sum_{i=1}^n x_i}.\end{equation}
This easily implies the assertion.
\end{proof}

\begin{definition} The Laplace transform of  $u\in \mathcal S_+^\prime(\mathbb R^n)$ is defined by
 $$ \mathcal L_ku: H_n(0)\to \mathbb C, \,\, \mathcal L_k u(z):= \langle\, u_x,\chi(x) E_k^A(x, -z)\rangle,$$
 where the notation $u_x$ indicates that $u$ acts on functions of the variable $x$, and the cutoff function $\chi$ is as above.  
 As $u$ is supported in $\overline{\mathbb R_+^n}$, this definition  is independent of the choice of $\chi.$  
\end{definition}

\begin{remark}\label{Laplace_measure}
If $m \in \mathcal S_+^\prime(\mathbb R^n)$ is of order zero, i.e. a complex tempered Radon measure supported 
in $\overline{\mathbb R_+^n}$, then its Laplace transform is given by
$$ \mathcal L_km(z) = \int_{\overline{\mathbb R_+^n}} E_k^A(x,-z) dm(x), \quad z\in H_n(0).$$
The exponential decay properties of $E_k^A$ in Lemma \ref{estim2} together with Morera's theorem imply that $\, \mathcal L_km$ is holomorphic 
on $H_n(0)$ and may be differentiated under the integral. 
\end{remark}

\begin{example}\label{ex_delta_0} Denote by $\delta_x$ the Dirac distribution in the point $x\in \mathbb R^n.$ Then
$\mathcal L_k(\delta_0) = 1.$ 
\end{example}

\begin{theorem}[Injectivity of the Laplace transform of tempered distributions] \label{inj_distributions}
Let $u\in \mathcal S_+^\prime(\mathbb R^n)$ and suppose that there is some 
$\,s\in \,]0,\infty[$ such that $\mathcal L_ku(\underline s+iy) = 0$ for all $y\in \mathbb R^n.$ Then $u=0.$ 
\end{theorem}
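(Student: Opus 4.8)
The plan is to reduce the injectivity statement for distributions to the already-established injectivity Corollary for the $L^1$-Laplace transform, by convolving $u$ with a suitable compactly supported mollifier and exploiting that the Laplace transform turns (an appropriate) convolution into a product. First I would fix a nonnegative $\varphi\in C_c^\infty(\mathbb R^n)$ supported in a small ball $B_\delta(0)$ with $\int\varphi=1$, and consider the translated mollifier supported in the positive orthant, so that the classical convolution $u*\varphi$ is a function (it is smooth, being the convolution of a tempered distribution with a test function) whose support lies in $\overline{\mathbb R_+^n}$ shifted by at most $\delta$; after a further translation by $\underline\delta$ one may assume $\mathrm{supp}(u*\varphi_\delta)\subseteq\overline{\mathbb R_+^n}$. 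Crucially, $u*\varphi$ is smooth and of at most polynomial growth together with all its derivatives (standard structure theory of tempered distributions), hence it lies in $L^1_{\mathrm{loc}}(\mathbb R_+^n,\omega_k)$ and, more to the point, is exponentially bounded so that $\mathcal L_k(u*\varphi)(z)$ exists for $\mathrm{Re}\,z\geq\underline s$.

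The key computational step is the convolution identity
\[
\mathcal L_k(u*\varphi)(z)=\langle u_x,\chi(x)E_k^A(x,-z)\rangle\cdot\Big(\text{something involving }\varphi\text{ and }E_k^A\Big).
\]
Here the subtlety is that the relevant "convolution" is the ordinary additive convolution on $\mathbb R^n$, not the Dunkl convolution, and the Dunkl kernel does \emph{not} satisfy $E_k^A(x+y,-z)=E_k^A(x,-z)E_k^A(y,-z)$. So the honest route is different: write
\[
(u*\varphi)(x)=\langle u_\xi,\varphi(x-\xi)\rangle,
\]
plug this into the defining integral $\mathcal L_k(u*\varphi)(z)=\int_{\mathbb R_+^n}(u*\varphi)(x)E_k^A(-x,z)\omega_k(x)\,dx$, and interchange the distributional pairing with the integral (justified by the rapid decay in Lemma 3.5 applied to $x\mapsto\chi(x)E_k^A(x,-z)$, which shows the integrand, as a function of $\xi$, lies in $\mathcal S(\mathbb R^n)$ uniformly). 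This yields
\[
\mathcal L_k(u*\varphi)(z)=\Big\langle u_\xi,\ \int_{\mathbb R^n}\varphi(x-\xi)\,\widetilde\chi(x)E_k^A(-x,z)\,\omega_k(x)\,dx\Big\rangle,
\]
and one checks that the inner integral equals $\chi_1(\xi)\cdot g_\varphi(\xi,z)$ where $g_\varphi(\xi,z)\to E_k^A(-\xi,z)$ as $\delta\to0$ — i.e. $\mathcal L_k(u*\varphi_\delta)(z)$ converges pointwise (indeed uniformly on $\{\mathrm{Re}\,z=\underline s\}$ against an $L^1(\omega_k)$ bound) to $\mathcal L_ku(z)$ as $\delta\to0$. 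Hence from $\mathcal L_ku(\underline s+iy)\equiv0$ we get $\mathcal L_k(u*\varphi_\delta)(\underline s+iy)\to0$; but in fact a cleaner statement suffices: for each fixed $\delta$, $\mathcal L_k(u*\varphi_\delta)(\underline s+iy)$ is, by the same interchange, an integral against $\mathcal L_ku(\underline s+i\cdot)$ convolved with (the Fourier/Dunkl transform of) $\varphi_\delta$, hence identically zero. Then the Corollary (injectivity of the $L^1$-Laplace transform) forces $u*\varphi_\delta=0$ a.e., for every admissible $\delta$, and letting $\delta\to0$ gives $u=0$ in $\mathcal S'(\mathbb R^n)$.

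Let me restate the cleanest version of the argument, which I would actually write up. For $\varphi\in C_c^\infty$ supported near $0$ with $\int\varphi=1$ and $\psi_\delta(x)=\delta^{-n}\varphi(x/\delta)$ translated so $\mathrm{supp}\,\psi_\delta\subset\mathbb R_+^n$, the function $f_\delta:=u*\psi_\delta$ is smooth, supported in $\overline{\mathbb R_+^n}$, and polynomially bounded with all derivatives; so $\mathcal L_kf_\delta$ is defined and holomorphic on $H_n(0)$. The identity to prove is that $\mathcal L_kf_\delta(z)$ exists and, for $\mathrm{Re}\,z\geq\underline s$,
\[
\mathcal L_kf_\delta(\underline s+iy)=\frac1{c_k}\big(\widehat{\psi_\delta}^{\,k}*_{\!y}\mathcal L_ku(\underline s+i\cdot)\big)(y)=0,
\]
the last equality because $\mathcal L_ku(\underline s+i\cdot)\equiv0$ by hypothesis; here one uses that the Dunkl transform intertwines multiplication of Laplace transforms by $\widehat{\psi_\delta}^{\,k}$ with ordinary convolution of $f_\delta$ with $\psi_\delta$ — this is exactly the content of combining the relation $\widehat F^{\,k}(y)=c_k^{-1}\mathcal L_kf(\underline s+iy)$ (with $F(x)=e^{-\langle x,\underline s\rangle}\widetilde f(x)$) from the proof of the inversion Theorem with the classical fact that $\widehat{(FG)}=c_k^{-1}\widehat F^{\,k}\widehat G^{\,k}$ for the additive convolution — wait, one must be careful: the Dunkl transform does \emph{not} convert additive convolution into a product. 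I would therefore avoid any convolution theorem and argue directly: by the distributional definition and Lemma 3.5, $\mathcal L_kf_\delta(\underline s+iy)=\langle u_\xi,\ h_{\delta,y}(\xi)\rangle$ where $h_{\delta,y}(\xi)=\int\psi_\delta(x-\xi)\chi(x)E_k^A(-x,\underline s+iy)\omega_k(x)dx$, and then integrate this expression against a Schwartz test function in $y$ to transfer, via Fubini and the definition of $\mathcal L_ku$, the vanishing hypothesis — concluding $\langle u_\xi, \Phi(\xi)\rangle=0$ for a dense enough family of $\Phi$ built from $\psi_\delta$ and inverse Dunkl transforms, whence $u=0$.

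The main obstacle is precisely this interchange and the fact that the Dunkl kernel is not exponential, so neither the Laplace nor the Dunkl transform has a clean convolution theorem with respect to \emph{ordinary} translation. The way around it is to keep everything at the level of the defining integrals and the pairing $\langle u,\cdot\rangle$: all analytic facts needed — that $\chi(x)E_k^A(x,-z)\in\mathcal S$ with uniform-in-$z$ decay on strips (Lemma 3.5, estimate \eqref{decay}), that the inversion Theorem identifies $\mathcal L_ku(\underline s+i\cdot)$ up to the factor $c_k$ with a Dunkl transform, and that the Dunkl transform is a homeomorphism of $\mathcal S$ with an $L^1$-inversion — are already in the excerpt. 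Once the interchange is justified, the reduction to the $L^1$-injectivity Corollary (or directly to the inversion Theorem) is routine, and letting the mollification parameter $\delta\to0$ recovers $u=0$ in $\mathcal S'(\mathbb R^n)$ by density of $C_c^\infty$ in the predual.
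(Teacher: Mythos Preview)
Your proposal is exploratory rather than a proof, and its central strategy has a real obstruction which you yourself identify: there is no convolution theorem here. The Dunkl kernel is not multiplicative under ordinary translation, so neither $\mathcal L_k$ nor the Dunkl transform turns $u*\psi_\delta$ into a product, and the identity $\mathcal L_k f_\delta(\underline s+iy)=c_k^{-1}\bigl(\widehat{\psi_\delta}^{\,k}*\mathcal L_ku(\underline s+i\cdot)\bigr)(y)$ that you need is simply false. Without it you cannot pass from $\mathcal L_ku(\underline s+i\cdot)\equiv 0$ to $\mathcal L_k f_\delta(\underline s+i\cdot)\equiv 0$, so the reduction to the $L^1$-injectivity Corollary collapses. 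Your attempts to patch this by ``direct'' interchanges never produce a clean vanishing statement for $f_\delta$; the function $h_{\delta,y}(\xi)=\int\psi_\delta(x-\xi)\chi(x)E_k^A(-x,\underline s+iy)\omega_k(x)\,dx$ is not of the form $(\text{something})\cdot E_k^A(-\xi,\underline s+iy)$, so you cannot re-assemble $\mathcal L_ku$ from it.

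The idea you bury at the very end --- integrate $\mathcal L_ku(\underline s+iy)$ against a test function in $y$ and use Fubini for the pairing --- is exactly the paper's proof, and once you see it the mollification is pure overhead. Take $\varphi\in\mathcal D(\mathbb R^n)$ and compute
\[
0=\int_{\mathbb R^n}\mathcal L_ku(\underline s+iy)\,\varphi(y)\,\omega_k(y)\,dy
=\int_{\mathbb R^n}\big\langle u_\xi,\chi(\xi)E_k^A(-\underline s-iy,\xi)\big\rangle\,\varphi(y)\,\omega_k(y)\,dy.
\]
Applying Fubini for the tensor product $u\otimes\omega_k$ (the integrand $\chi(\xi)E_k^A(-\underline s-iy,\xi)\varphi(y)$ lies in $\mathcal S(\mathbb R^n\times\mathbb R^n)$ by the decay estimate you already cite), and then factoring $E_k^A(-\underline s-iy,\xi)=e^{-\langle\underline s,\xi\rangle}E_k^A(-iy,\xi)$, one gets
\[
0=\big\langle e^{-\langle\underline s,\cdot\rangle}u,\ \widehat\varphi^{\,k}\big\rangle.
\]
Since the Dunkl transform is a homeomorphism of $\mathcal S(\mathbb R^n)$ and $\mathcal D$ is dense in $\mathcal S$, the set $\{\widehat\varphi^{\,k}:\varphi\in\mathcal D\}$ is dense in $\mathcal S$, hence $e^{-\langle\underline s,\cdot\rangle}u=0$ and so $u=0$. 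No mollifier, no convolution theorem, no limit $\delta\to 0$.
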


\begin{proof} Fix a cutoff
 function $\chi $ as above, and let $\varphi \in \mathcal D(\mathbb R^n).$ 
 By Lemma \ref{Dunkl_Schwartz}, the function
 $\xi\mapsto \chi(\xi)E_k(-\underline s-iy, \xi)$ belongs to $\mathcal S(\mathbb R^n)$ for each $y\in \mathbb R^n$, and it is easily checked that 
 $$ \psi(\xi,y) := \chi(\xi)E_k^A(-\underline s-iy, \xi)\varphi(y) \in \mathcal S(\mathbb R^n\times \mathbb R^n).$$
Consider the weight function $\omega_k$ as a regular
tempered distribution on $\mathbb R^n$ in the usual way. 
Then by Fubini's theorem for tensor products of tempered distributions (see e.g. \cite[Section 5.5] {V}),
\begin{align*}
0 & = \, \int_{\mathbb R^n} \mathcal L_ku(\underline s+iy) \varphi(y)\, \omega_k(y) dy \\
& =\,\int_{\mathbb R^n} \big\langle u_\xi, \chi(\xi) E_k^A(-\underline s-iy, \xi)\big\rangle\,\varphi(y)\,\omega_k(y)dy \\
&= \,\int_{\mathbb R^n} \langle u_\xi, \psi(\xi,y)\rangle \,\omega_k(y)dy = \,
\langle u\otimes \omega_k, \psi\rangle \,= \,\big\langle u_\xi, \int_{\mathbb R^n} \psi(\xi, y)\, \omega_k(y)dy\big\rangle\,  \\
 & =\,\big\langle u_\xi, \chi(\xi) e^{-\langle \underline s, \xi\rangle} \int_{\mathbb R^n} E_k^A(-iy,\xi) \varphi(y)\,\omega_k(y)dy\big\rangle\\
 &= \, \big\langle e^{-\langle \underline s,\,\cdot\, \rangle}u\,, \widehat \varphi^{\,k}\big\rangle.  \end{align*}
Since $\mathcal D(\mathbb R^n)$ is dense in $\mathcal S(\mathbb R^n)$ and the Dunkl transform is a homeomorphism of $\mathcal S(\mathbb R^n)$, it follows that  $u=0$. 
\end{proof}

\section{Riesz distributions in the type $A$ Dunkl setting}\label{5}

In this section we assume that  $k>0.$  We put 
$\, \mu_0:=k(n-1)\,$ and introduce the normalization constant 
$$ d_n(k):= \prod_{j=1}^n \frac{\Gamma(1+jk)}{\Gamma(1+k)} = (2\pi)^{-n/2}c_{k,n}\,$$
with the Mehta constant $c_{k,n}$ from \eqref{mehtaconst}, 
as well as the multivariable Gamma function and generalized Pochhammer symbol
$$ \Gamma_n(\mu;k):= \prod_{j=1}^n \Gamma\big(\mu-k(j-1)\big), \quad [\mu]_\lambda^{k} := \prod_{j=1}^{l(\lambda)}\big(\mu-k(j-1)\big)_{\lambda_j}\,$$
for $\mu\in \mathbb C$ and partitions $\lambda \in \Lambda^+_n\,.$ Here $(a)_n = a(a+1)\cdots (a+n-1).$ 
Notice that the pole set of $\Gamma_n(\,.\,;k)$ is given by $\{0, k, \ldots, k(n-1)\}- \mathbb N_0\,.$ 

\smallskip
Before turning to Riesz distributions, we provide some Laplace transform formulas which will be useful in  the sequel.  
Recall the Jack polynomials $C_\lambda^\alpha$ in $n$ variables with $\alpha >0.$ It will be convenient to work with the renormalized polynomials
$$ \widetilde C_\lambda^\alpha(z):= \frac{C_\lambda^\alpha(z)}{C_\lambda^\alpha(\underline 1)}, \quad\underline 1 = (1, \ldots, 1) \in \mathbb R^n.$$
Recall our notation $D(x) = \prod_{i=1}^n x_i$ for $x\in \mathbb R^n.$
We need the following integral formula which is due to  Macdonald.

\begin{lemma}[\cite{M1}, formula (6.18)]\label{lemma_Macdonald}   For $\mu \in \mathbb C$ with $\,{\rm Re}\, \mu > \mu_0$ and $\lambda \in \Lambda^+_n$, 
\begin{equation}\label{int_jack_1} 
\int_{\mathbb R_+^n} \widetilde C_\lambda^{\,1/k}(x)e^{-\langle x,\underline 1 \rangle }D(x)^{\mu-\mu_0-1} \omega_k(x)dx = 
  \,d_n(k)\Gamma_n(\mu;k)\cdot [\mu]_\lambda^k\,. 
\end{equation}
\end{lemma}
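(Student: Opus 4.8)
The plan is to establish \eqref{int_jack_1} by reducing it to a known evaluation of a Selberg-type integral, namely the Kadell--Kaneko integral for Jack polynomials, combined with the defining normalization of the $C_\lambda^\alpha$. Concretely, I would first observe that the integral on the left is the Laplace transform $\mathcal L_k$ of the function $x\mapsto \widetilde C_\lambda^{1/k}(x) D(x)^{\mu-\mu_0-1}$ evaluated at $z=\underline 1$; the hypothesis $\mathrm{Re}\,\mu>\mu_0$ guarantees integrability near the boundary of $\mathbb R_+^n$ (where $D(x)^{\mu-\mu_0-1}$ is singular), since $\omega_k$ vanishes only on the diagonals and the Gindikin-type condition $\mathrm{Re}\,\mu-\mu_0-1>-1$ on each coordinate is exactly what is needed, while the exponential factor $e^{-\langle x,\underline 1\rangle}$ controls convergence at infinity. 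So the integral is well-defined.

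Next I would recall the Kaneko--Kadell integral: for $\mathrm{Re}\,a>k(n-1)$,
\begin{equation*}
\int_{\mathbb R_+^n} C_\lambda^{1/k}(x)\, e^{-\langle x,\underline 1\rangle}\, D(x)^{a-1}\,\omega_k(x)\,dx
= C_\lambda^{1/k}(\underline 1)\cdot \prod_{j=1}^n \Gamma\big(a-k(j-1)\big)\cdot [a]_\lambda^{k}
\end{equation*}
up to the precise normalization constant, where $[a]_\lambda^k$ is the generalized Pochhammer symbol as defined in the paper. With $a=\mu-\mu_0$ this reads $a-1 = \mu-\mu_0-1$, $\prod_j\Gamma(a-k(j-1)) = \Gamma_n(\mu;k)$ (after relabeling, since $\Gamma_n(\mu;k)=\prod_{j=1}^n\Gamma(\mu-k(j-1))$ and here the shift by $\mu_0=k(n-1)$ is absorbed into the product over $j$), and $[a]_\lambda^k = [\mu]_\lambda^k$ — I would double-check this index bookkeeping carefully, as that is where sign/shift errors creep in. Dividing through by $C_\lambda^{1/k}(\underline 1)$ converts $C_\lambda^{1/k}$ to $\widetilde C_\lambda^{1/k}$ on the left and removes the factor $C_\lambda^{1/k}(\underline 1)$ on the right, and identifying the overall normalization constant with $d_n(k)$ via the Mehta integral \eqref{mehtaconst} (the case $\lambda=\emptyset$ of the formula is precisely the statement $\Gamma_n(\mu;k)$ times $d_n(k)$ equals the Gindikin gamma integral, which pins down the constant) yields \eqref{int_jack_1}.

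An alternative, more self-contained route avoiding a direct appeal to Kadell--Kaneko: expand $e^{-\langle x,\underline 1\rangle}$ is not helpful, but one can instead use the known fact that the Laplace transform of $D(x)^{\mu-\mu_0-1}\omega_k(x)$ against the symmetric Dunkl (Bessel) kernel is $\Gamma_n(\mu;k)$ times a power of the "determinant" in the transform variable, and then extract the Jack coefficient by applying the Pieri-type / binomial formula for Jack polynomials; but this essentially reproves Kadell--Kaneko and is longer. Since the paper explicitly attributes the formula to Macdonald's manuscript, I expect the intended proof is simply to cite it, perhaps with a one-line indication of the convergence justification under $\mathrm{Re}\,\mu>\mu_0$ that Macdonald treated only formally. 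The main obstacle, therefore, is not the evaluation itself but making the convergence rigorous: one must check that $\widetilde C_\lambda^{1/k}(x)$, which is a polynomial hence at most polynomially growing, does not spoil integrability — this is immediate from \eqref{jack_hilf}, giving $|\widetilde C_\lambda^{1/k}(x)|\le \|x\|_\infty^{|\lambda|}$ — and that the boundary singularity of $D(x)^{\mathrm{Re}\,\mu-\mu_0-1}$ is integrable against $\omega_k(x)\,dx$, which follows by the same one-dimensional power-counting argument used for Riemann--Liouville integrals applied coordinatewise. With these two observations the formal manipulation becomes a legitimate identity of absolutely convergent integrals.
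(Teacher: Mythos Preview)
Your instinct that the lemma is essentially a citation to a known Selberg-type evaluation is correct, and you rightly anticipate that the paper does not give a self-contained proof. However, the specific route you sketch differs from the paper's and contains a bookkeeping error worth flagging.

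The paper does not invoke a gamma-type Jack integral directly. Instead, it indicates that Macdonald obtained \eqref{int_jack_1} from the \emph{beta-type} Kadell integral over the cube,
\[
\int_{[0,1]^n} \widetilde C_\lambda^{\,1/k}(y)\,D(y)^{\mu-\mu_0-1}D(\underline 1 - y)^{\nu-\mu_0-1}\,\omega_k(y)\,dy
= \frac{d_n(k)\,\Gamma_n(\mu;k)\Gamma_n(\nu;k)}{\Gamma_n(\mu+\nu;k)}\cdot\frac{[\mu]_\lambda^k}{[\mu+\nu]_\lambda^k},
\]
via the substitution $y_j = x_j/(\nu-\mu_0-1)$ and the limit $\nu\to\infty$. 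This is the standard ``beta $\to$ gamma'' limiting trick (Stirling on the right, dominated convergence on the left), and the constant $d_n(k)$ is already built into Kadell's evaluation rather than identified afterwards from the Mehta integral. Your proposal, by contrast, cites as the ``Kaneko--Kadell integral'' a gamma-type formula over $\mathbb R_+^n$ that is essentially the statement of the lemma itself, so the reduction is circular unless you point to an independent source for that exact identity.

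Separately, your parameter bookkeeping does not close. With $a=\mu-\mu_0=\mu-k(n-1)$ one has $a-k(j-1)=\mu-k(n+j-2)$, so $\prod_{j=1}^n\Gamma(a-k(j-1))$ is \emph{not} $\Gamma_n(\mu;k)=\prod_{j=1}^n\Gamma(\mu-k(j-1))$; no relabeling of $j$ rescues this, since the arguments lie in disjoint arithmetic progressions. Likewise $[a]_\lambda^k\neq[\mu]_\lambda^k$. The correct identification is simply $a=\mu$ with the exponent on $D$ written as $\mu-\mu_0-1$ from the start, but then your displayed ``Kaneko--Kadell'' formula is literally the lemma. Your convergence remarks, on the other hand, are fine and are indeed the only analytic content needed beyond the cited combinatorial evaluation.
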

For $\lambda=0$ this becomes
$$
 \int_{\mathbb R_+^n} e^{-\langle x, \underline 1 \rangle } D(x)^{\mu-\mu_0-1} \omega_k(x)dx = \,d_n(k) \Gamma_n(\mu;k).
$$

Formula \eqref{int_jack_1} was deduced in \cite{M1}  from the Kadell integral (\cite{Kad}, see  also \cite[(2.46)]{FW}),
$$\int\limits_{[0,1]^n} \!\!\widetilde C_\lambda^{\,1/k} (y)
D(y)^{\mu-\mu_0-1} D(\underline 1 - y)^{\nu-\mu_0-1} \,\omega_k(y) dy  =  \frac{d_n(k)\Gamma_n(\mu;k) \Gamma_n(\nu;k)}{\Gamma_n(\mu+\nu;k)} \cdot \frac{[\mu]_\lambda^k }{[\mu+\nu]_\lambda^k} $$
by putting $y_j = \frac{x_j}{\nu-\mu_0-1}$
and taking the limit $\nu\to\infty.$ 
We mention that in a similar way, the classical Mehta integral $c_{k,n}$
had been evaluated by Bombieri, c.f. \cite{M3}.

\begin{theorem}\label{laplace_power}  Let $\mu \in \mathbb C$ with $\,{\rm Re}\, \mu > \mu_0$ and $z \in H_n(0).$ Then
\begin{equation}\label{lap_power} \int_{\mathbb R_+^n} E_k^A(-x,z) D(x)^{\mu-\mu_0-1} \omega_k(x)dx = d_n(k)\Gamma_n(\mu;k) \cdot 
D(z)^{-\mu}, \end{equation}
where $$D(z)^a:= \prod_{j=1}^n z_j^a \quad \text{for }\, a\in \mathbb C$$
and $\zeta \mapsto \zeta^a$ denotes the principal branch of the power function on $\mathbb C \setminus ]-\infty, 0]$, satisfying
$1^a=1.$ 
\end{theorem}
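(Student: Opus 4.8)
The plan is to reduce the integral identity to Macdonald's formula \eqref{int_jack_1} by expanding the Dunkl kernel in Jack polynomials and exploiting the factorization property of $E_k^A$ in Lemma \ref{estim2}(1), together with the injectivity of the Laplace transform (Corollary following the inversion theorem). First I would observe that it suffices, by analytic continuation in $z$, to prove \eqref{lap_power} for $z$ in a smaller domain, say $\mathrm{Re}\, z \geq \underline 1$, where convergence of all the integrals in sight is guaranteed by Lemma \ref{estim2}(2); indeed both sides are holomorphic on $H_n(0)$ (the left by Lemma \ref{Laplace_1}(1), the right by inspection since $D(z)^{-\mu}$ is holomorphic where no $z_j$ lies in $]-\infty,0]$, which holds on $H_n(0)$). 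A further reduction: using the substitution $x \mapsto tx$ and the homogeneity $E_k^A(-tx,z) = E_k^A(-x,tz)$, one checks that both sides transform the same way under $z \mapsto tz$ for $t>0$, so it is enough to handle, for instance, $z = \underline 1 + iy$ or even just to identify the two sides as functions on the diagonal $z = \underline s$ and then use holomorphy.

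The core computation is the case $z = \underline 1$. Here I would write $E_k^A(-x,\underline 1) = e^{-\langle x,\underline 1\rangle}$ (a special case of Lemma \ref{estim2}(1) with $z=0$, using $E_k^A(x,0)=1$), so the left side becomes exactly $\int_{\mathbb R_+^n} e^{-\langle x,\underline 1\rangle} D(x)^{\mu-\mu_0-1}\omega_k(x)\,dx = d_n(k)\Gamma_n(\mu;k)$ by the $\lambda=0$ case of Lemma \ref{lemma_Macdonald}, which matches the right side $d_n(k)\Gamma_n(\mu;k)\cdot D(\underline 1)^{-\mu} = d_n(k)\Gamma_n(\mu;k)$. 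To get the full $z$-dependence I would not argue pointwise but rather expand: for $z$ with $\mathrm{Re}\, z$ large, write $E_k^A(-x,z)$ via its Jack polynomial series — more precisely, symmetrizing in $x$ is harmless since $D(x)^{\mu-\mu_0-1}\omega_k(x)$ is $S_n$-invariant, so $E_k^A(-x,z)$ may be replaced by $J_k^A(-x,z) = {}_0F_0^\alpha(-x,z) = \sum_\lambda \frac{(-1)^{|\lambda|}}{|\lambda|!}\widetilde C_\lambda^{1/k}(x)\,C_\lambda^{1/k}(z)$ using \eqref{power-j-a}. Interchanging sum and integral (justified by the estimate \eqref{jack_estim}, which gives a convergent dominating series for $\mathrm{Re}\, z$ in a compact set away from the boundary, combined with the integrability of $D(x)^{\mathrm{Re}\,\mu-\mu_0-1}e^{-\varepsilon\|x\|_1}\omega_k(x)$) and applying Lemma \ref{lemma_Macdonald} term by term, the left side of \eqref{lap_power} becomes
\[
d_n(k)\Gamma_n(\mu;k)\sum_{\lambda\in\Lambda_n^+}\frac{(-1)^{|\lambda|}}{|\lambda|!}\,[\mu]_\lambda^k\,\widetilde C_\lambda^{1/k}(z).
\]
It remains to recognize this sum as $D(z)^{-\mu} = \prod_j z_j^{-\mu}$. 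This is precisely the Jack-polynomial (i.e. ${}_1F_0^\alpha$ type) binomial expansion of the power function $D(\underline 1 - z)^{-\mu}$ around $z = \underline 1$ — equivalently the identity $\sum_\lambda \frac{[\mu]_\lambda^k}{|\lambda|!}\widetilde C_\lambda^{1/k}(w) = \prod_j(1-w_j)^{-\mu}$ valid for $\|w\|_\infty<1$ — applied after the shift, and it can be cited from \cite{Ka} or \cite{M1} or proved by noting both sides satisfy the same system of differential equations with the same value and first-order data at $0$.

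The main obstacle I expect is the rigorous justification of the term-by-term integration and the region of validity: the Jack series for $J_k^A(-x,z)$ converges locally uniformly on all of $\mathbb C^n\times\mathbb C^n$ by \eqref{jack_estim}, but after multiplying by $D(x)^{\mu-\mu_0-1}\omega_k(x)$ and integrating over the unbounded region $\mathbb R_+^n$, one must dominate uniformly in the summation index; the bound $|\widetilde C_\lambda^{1/k}(x)| \leq \|x\|_\infty^{|\lambda|}$ from \eqref{jack_hilf} combined with $\sum_{|\lambda|=m}\widetilde C_\lambda^{1/k}(\underline 1)^{-1}\cdot(\cdots)$ — no, more carefully, one uses $\sum_{|\lambda|=m}|\widetilde C_\lambda^{1/k}(x)| = \sum_{|\lambda|=m} \widetilde C_\lambda^{1/k}(|x|) \le$ something controlled by $\|x\|_\infty^m$ via \eqref{norm_Jack} applied at $\underline 1$ scaled — together with $|[\mu]_\lambda^k| \le C^{|\lambda|}(|\lambda|!)$ growth, forces a restriction like $\|z\|_\infty$ small or $\mathrm{Re}\, z$ large for the interchange, which is exactly why the reduction to a subdomain plus analytic continuation at the start is essential. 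Once convergence is pinned down on, say, $\{z : \mathrm{Re}\, z_j > 2\mu_0 + 2|\mathrm{Im}\, z_j|\}$ or any open set, holomorphy of both sides on $H_n(0)$ finishes the proof. As a cleaner alternative avoiding the binomial identity, I could instead apply Lemma \ref{Laplace_1}(2) with the polynomial $p = D$ together with the $\lambda=0$ base case and induction on a degree, but the Jack-expansion route is the one that directly exposes the structure and connects to the rest of the paper.
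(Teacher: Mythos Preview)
Your approach is essentially the paper's: reduce to $J_k^A$ by $S_n$-invariance, shift $z\mapsto z+\underline 1$ so that the factorization of Lemma~\ref{estim2}(1) produces the weight $e^{-\langle x,\underline 1\rangle}$, expand $J_k^A(-x,\cdot)$ in Jack polynomials, integrate term by term using Lemma~\ref{lemma_Macdonald}, identify the resulting ${}_1F_0^\alpha$ series via Yan's binomial formula \eqref{Yan}, and then analytically continue in $z$.

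Two slips to fix. First, the shift must be made \emph{before} you expand and apply Lemma~\ref{lemma_Macdonald}: as written, your displayed computation integrates $\widetilde C_\lambda^{1/k}(x)D(x)^{\mu-\mu_0-1}\omega_k(x)$ over $\mathbb R_+^n$ without the factor $e^{-\langle x,\underline 1\rangle}$, and that integral diverges; you mention the shift afterwards, but it has to be built into the computation from the start (this is exactly what the paper does, working at $z+\underline 1$ with $\|z\|_\infty$ small). Second, your displayed series and your stated binomial identity have $\widetilde C_\lambda^{1/k}$ where they should have $C_\lambda^{1/k}$: Yan's formula is $\sum_\lambda [\mu]_\lambda^k\, C_\lambda^\alpha(w)/|\lambda|!=D(\underline 1-w)^{-\mu}$, and inserting the extra factor $C_\lambda^\alpha(\underline 1)^{-1}$ already fails at degree one (left side gives $\mu\,(w_1+\cdots+w_n)/n$, right side gives $\mu\,(w_1+\cdots+w_n)$). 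With these corrections the argument goes through and matches the paper line for line.
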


Note that the Laplace integral in \eqref{lap_power} indeed converges and defines a holomorphic function on $H_n(0)$, as a consequence of the
decay properties of $E_k^A$.

\begin{proof}[Proof of Theorem \ref{laplace_power}] As $D$ and $\omega_k$ are $S_n$-invariant, it suffices to prove 
 \eqref{lap_power} with $E_k^A$ replaced by $J_k^A.$
Consider first  $z\in \mathbb C^n$ with $\|z\|_\infty < \frac{\epsilon}{n}, \,\, 0<\epsilon<1,$ and put $\alpha:= 1/k.$ 
By the factorization of $J_k^A$ according to Lemma \ref{estim2}(1), its hypergeometric expansion \eqref{power-j-a} as well as 
Lemma \ref{lemma_Macdonald}, we obtain
\begin{align*} \int_{\mathbb R_+^n} J_k^A&(-x,z+ \underline 1) D(x)^{\mu-\mu_0-1} \omega_k(x)dx  \\ 
  & = \int_{\mathbb R_+^n} \Bigl(\sum_{\lambda\in \Lambda^+_n}  
  \frac{C_\lambda^\alpha(-z)}{|\lambda|!}\,\widetilde C_\lambda^{\,\alpha}(x)\Bigr)\,e^{-\langle x, \underline 1\rangle} 
  D(x)^{\mu-\mu_0-1} \omega_k(x)dx \\
  & = \sum_{\lambda}\frac{C_\lambda^\alpha(-z)}{|\lambda|!} \int_{\mathbb R_+^n} \widetilde C_\lambda^{\,\alpha}(x)\,e^{-\langle x, \underline 1\rangle} 
  D(x)^{\mu-\mu_0-1} \omega_k(x)dx\\
  & = d_n(k) \Gamma_n(\mu;k) \cdot \sum_{\lambda}[\mu]_\lambda^k\,\frac{C_\lambda^\alpha(-z)}{|\lambda|!}\,.
\end{align*}
Here the interchange of the summation with the integral is justified by the dominated convergence theorem, since
$$ \sum_{\lambda} \frac{1}{|\lambda|!}\, |C_\lambda^\alpha(-z)\widetilde C_\lambda^{\,\alpha}(x)|\,\leq \,
\sum_{m=0}^\infty \frac{1}{m!}(n\|z\|_\infty\|x\|_\infty)^m \, = 
e^{n\|z\|_\infty\|x\|_\infty} \leq  e^{\epsilon\langle x, \underline 1\rangle}\,
$$
where estimate \eqref{jack_estim} has been used. It is known  from \cite{Ka} that for each $a\in \mathbb C$, the hypergeometric  series
$$ \sum_{\lambda\in \Lambda^+_n}[a]_\lambda^k\,\frac{C_\lambda^\alpha(z)}{|\lambda|!}\ =:\,_1F_0^\alpha(a;z) $$
converges absolutely for  $\|z\|_\infty <\rho$, provided $\rho\in ]0,1[$ is small enough.   Moreover, in this case Yan's \cite[Prop. 3.1]{Y} binomial  formula states that
\begin{equation}\label{Yan} \,_1F_0^\alpha(a;z) = D(\underline 1 - z)^{-a}.\end{equation} 
Thus for $\|z\|_\infty$ small enough, we  obtain that
$$  \int_{\mathbb R_+^n} J_k^A(-x,z+\underline 1) D(x)^{\mu-\mu_0-1} \omega_k(x)dx = d_n(k) \Gamma_n(\mu;k) D(z+\underline 1)^{-\mu},$$
and the general statement follows by analytic continuation with respect to $z$. 
\end{proof}

We now proceed to the definition of Riesz distributions associated with root systems of type $A$.
 For  a parameter $\mu \in \mathbb C$, define $\, g_\mu \in L_{\textrm{loc}}^1(\mathbb R_+^n, \omega_k)$ by 
 $$ g_\mu(x) := \frac{1}{d_n(k)\Gamma_n(\mu;k)} \cdot D(x)^{\mu-\mu_0-1}, \quad x \in \mathbb R_+^n.$$
 Theorem \ref{laplace_power} says that for $\text{Re}\,\mu > \mu_0$, the Laplace transform $\mathcal L_kg_\mu$ exists on $H_n(0)$ and is 
 given by 
 
 \begin{equation}\label{Laplace_density}
 \mathcal L_kg_\mu(z) = D(z)^{-\mu}.
\end{equation}

\begin{definition}\label{Dunkl_Riesz}
For $\mu\in \mathbb C$ with $\text{Re}\, \mu >\mu_0 $ define the 
(type $A$) Riesz measure $R_\mu \in M(\mathbb R^n)$ by  
\begin{align*}  \langle R_\mu, \varphi\rangle &:= \int_{\mathbb R_+^n}\varphi(x)g_\mu(x) \omega_k(x) dx  \\ &= \frac{1}{d_n(k)\Gamma_n(\mu;k)}\int_{\mathbb R_+^n}\varphi(x)D(x)^{\mu- \mu_0-1} \omega_k(x)dx,  
\,\, \varphi\in C_c(\mathbb R^n).\end{align*}
\end{definition}

We shall regard the complex Radon measure $R_\mu$ also as a tempered distribution on $\mathbb R^n$ with support $\overline{\mathbb R_+^n}.$ 
Notice that the mapping $\mu\to R_\mu$ is holomorphic on $\{\mu \in \mathbb C: \text{Re}\, \mu > \mu_0\}$ with values in 
$ \mathcal S^\prime(\mathbb R^n),$
i.e. $\mu \mapsto \langle R_\mu, \varphi\rangle $ is
holomorphic for each $\varphi \in \mathcal S(\mathbb R^n).$ 

The Riesz measures $R_\mu$ have already been introduced in the (unpublished) thesis \cite{L}. There also the subsequent Bernstein identity as well as Corollary \ref{main} concerning the
distributional extension of the Riesz measures with respect to $\mu$ 
were proven. For the reader's convenience, we shall nevertheless include  proofs of these results, where our proof of the Bernstein identity is slightly different from that in 
\cite{L}. 
The  Bernstein identity is based on the operator
$$ D\bigl(T(k)\bigr):= \prod_{i=1}^n T_i(k)$$
with $T_i(k)$ the $A_{n-1}$ Dunkl operators. 
Notice that $D\bigl(T(k)\bigr)$ acts as a linear differential operator of order $n$ on $C^r(\mathbb R^n)^{{\rm rad}}$, the subspace of $S_n$-invariant
functions from $C^r(\mathbb R^n)$ with $r\geq n.$

\begin{lemma}[Bernstein identity] \label{Bernstein} 
For $x\in \mathbb R_+^n$ and $a\in \mathbb C,$ 
$$ D\bigl(T(k)\bigr) D(x)^a = b_k(a) D(x)^{a-1},$$
with
$$ b_k(a) = \prod_{i=1}^n (a+k(i-1)).$$
\end{lemma}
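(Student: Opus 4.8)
The plan is to prove the Bernstein identity $D(T(k))\,D(x)^a = b_k(a)\,D(x)^{a-1}$ on $\mathbb{R}_+^n$ by computing directly, using the structure of the type $A$ Dunkl operators $T_i(k) = \partial_i + k\sum_{j\neq i}\frac{1}{x_i-x_j}(1-\sigma_{ij})$. The first observation is that $D(x)^a = \prod_{i=1}^n x_i^a$ is $S_n$-invariant, so the reflection terms in $T_i(k)$ act in a controlled way on it and on the intermediate functions produced along the way. Rather than applying all $n$ operators blindly, I would argue by descending induction: show that $T_n(k)\cdots T_{m+1}(k)\, D(x)^a$ equals $\left(\prod_{i=m+1}^n (a+k(i-1))\right) x_1^a\cdots x_m^a\, x_{m+1}^{a-1}\cdots x_n^{a-1}$, i.e. the operators ``peel off'' one power at a time from the rightmost variables while leaving a product structure.

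The key computational lemma is the single-variable step: I would compute $T_i(k)$ applied to a function of the form $h(x) = x_i^b \cdot (\text{stuff symmetric under }\sigma_{ij}\text{ for }j\neq i\text{ in the relevant index range})$. Concretely, for a monomial-type function $u(x) = \prod_{\ell} x_\ell^{c_\ell}$, one has $\partial_i u = c_i x_i^{-1} u$, and $\frac{1-\sigma_{ij}}{x_i-x_j}(x_i^{c}x_j^{c'}) = \frac{x_i^c x_j^{c'} - x_i^{c'}x_j^c}{x_i - x_j}$, which for $c = c'$ vanishes and for $c = b$, $c' = b-1$ (the mixed case that arises) gives a clean expression. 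The cleanest route is: since the target function $D(x)^{a-1}$ and all the intermediate functions are ultimately symmetric or become symmetric, one can first verify the identity on the dense-in-$C^r$ subspace where everything is manageable, or — better — just note that $D(x)^a$ is symmetric, hence $(1-\sigma_{ij})D(x)^a = 0$, so $T_i(k) D(x)^a = \partial_i D(x)^a = a\, x_i^{-1} D(x)^a = a\, x_1^a\cdots x_{i-1}^a x_i^{a-1} x_{i+1}^a \cdots x_n^a$. That single fact starts the induction and produces the factor $a = a + k\cdot 0$.

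For the inductive step, after applying $T_n(k), T_{n-1}(k), \ldots, T_{i+1}(k)$ we have a function $f_i(x) = \big(\prod_{\ell=i+1}^n(a+k(\ell-1))\big)\, x_1^a\cdots x_i^a\, x_{i+1}^{a-1}\cdots x_n^{a-1}$, and I apply $T_i(k)$. The derivative part gives $a\, x_i^{-1} f_i$. For the reflection part, $\sum_{j\neq i}\frac{1}{x_i-x_j}(1-\sigma_{ij})f_i$: when $j > i$, the exchange $\sigma_{ij}$ swaps $x_i^a$ with $x_j^{a-1}$, producing $\frac{x_i^a x_j^{a-1} - x_i^{a-1}x_j^a}{x_i-x_j} = x_i^{a-1}x_j^{a-1}\cdot\frac{x_i - x_j}{x_i-x_j} = x_i^{a-1}x_j^{a-1}$ (times the remaining common factors), so each of the $n-i$ such terms contributes $x_i^{-1} f_i$ after restoring factors; when $j < i$, both $x_i$ and $x_j$ carry exponent $a$, so $\sigma_{ij}$ fixes $f_i$ and the term is zero. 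Summing, the reflection part yields $k(n-i)\, x_i^{-1} f_i$, and combining, $T_i(k) f_i = (a + k(n-i))\, x_i^{-1} f_i = (a+k(i'-1))\, f_{i-1}$ with the correct index bookkeeping (here $i$ runs down, and $a + k(n-i)$ over all steps produces exactly $\prod_{i=1}^n(a+k(i-1))$). The main obstacle — really the only subtle point — is getting the index arithmetic exactly right so that the product of the factors $(a + k(n-i))$ accumulated at step $i$ (for $i = n, n-1, \ldots, 1$) reassembles into $\prod_{i=1}^n(a+k(i-1)) = b_k(a)$, and making sure the claimed product form of the intermediate $f_i$ is genuinely preserved (which it is, because at each stage the ``already-lowered'' variables $x_{i+1},\ldots,x_n$ all share the common exponent $a-1$, so no further reflection mixing among them occurs). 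Since these are rational functions agreeing on the open set $\mathbb{R}_+^n$, there are no convergence or regularity issues to address.
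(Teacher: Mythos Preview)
Your proof is correct and follows essentially the same inductive strategy as the paper: apply the commuting operators $T_i(k)$ one at a time to the intermediate monomials $x_1^{c_1}\cdots x_n^{c_n}$ in which some variables carry exponent $a$ and the rest carry $a-1$, and observe that each application lowers one more exponent while producing a clean scalar factor. The only cosmetic differences are that the paper applies $T_1(k),\ldots,T_n(k)$ in increasing order (writing the intermediate function as $D(x)^{a-1}\cdot x_i\cdots x_n$) and invokes the Dunkl product rule with the symmetric factor $D(x)^{a-1}$, whereas you apply $T_n(k),\ldots,T_1(k)$ in decreasing order and compute the difference quotients directly on the monomial; the resulting step factors $(a+k(i-1))$ and their product $b_k(a)$ are identical.
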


\begin{proof} 
We claim that for $a\in \mathbb C$ and $i=1, \ldots, n$, 
\begin{equation}\label{induction}
T_i(k)\bigl(D(x)^{a-1}\cdot  x_i\cdots x_n\bigr) = \bigl(a+k(i-1)\bigr) 
D(x)^{a-1} \cdot x_{i+1} \cdots x_n \,.
\end{equation}
For the proof of this identity, note that 
for $f,g\in  C^1(\mathbb R_+^n)$ with $f$ or $g$ $S_n$-invariant, the Dunkl operators satisfy the product rule
$$ T_i(k)(fg) = T_i(k)f \cdot g + f\cdot T_i(k) g.$$
Therefore 
$$ T_i(k)\bigl(D(x)^{a-1}\cdot  x_i\cdots x_n\bigr) = 
   T_i(k)(x_i\cdots x_n) \cdot D(x)^{a-1} + \,x_i\cdots x_n \cdot \partial_i\bigl(D(x)^{a-1}\bigr).  $$
   Further, 
\begin{align*} T_i(k)(x_i \cdots x_n) &= \partial_i(x_i \cdots x_n) + k\sum_{j \not= i} 
   \frac{(x_i\cdots x_n) -\sigma_{ij}(x_i\cdots x_n)}{x_i-x_j} \notag\\ 
   &= \bigl(1+k(i-1)\bigr)\cdot x_{i+1}\cdots x_n.\end{align*}
This gives formula \eqref{induction}, from which the  statement of the Lemma follows by recursion.
\end{proof}

\begin{remark} 
An alternative, less direct proof of the Bernstein identity can be obtained from the Laplace transform identity \eqref{Laplace_density}, similar as
in \cite{FK}, Propos. VII.1.4 for symmetric cones: 
For $a \in   \mathbb C$ with $\text{Re}\, a < -\mu_0$ we have
$$ D(x)^a = \mathcal L_k g_{-a}(x)  \quad \text{ for } \,x\in \mathbb R_+^n$$
and thus by Lemma \ref{Laplace_1}, 
\begin{align*} D(T(k)) D(x)^a &=   (-1)^n\mathcal L_k(D\, g_{-a})(x) = (-1)^n \frac{\Gamma_n(1-a;k)}{\Gamma_n(-a;k)} \mathcal L_k(g_{-a+1})(x) \\
&= b_k(a) D(x)^{a-1}.
\end{align*}
For general $a \in \mathbb C$, the Bernstein identity then follows by analytic extension. 
\end{remark}

\begin{corollary}\label{main}
The mapping $\, \mu \mapsto R_\mu\,, \, \{{\rm Re}\,\mu > \mu_0\} \to \mathcal S^\prime (\mathbb R^n)$  extends uniquely to an 
$\mathcal S^\prime(\mathbb R^n)$-valued holomorphic mapping on $\mathbb C$ satisfying the recursion
$$ D\bigl(T(k)\bigr) R_\mu = R_{\mu -1} \quad (\mu \in \mathbb C).$$
\end{corollary}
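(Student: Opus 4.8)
The plan is to combine the Bernstein identity from Lemma \ref{Bernstein} (in its distributional form) with a standard analytic continuation argument. First I would record the distributional version of the Bernstein identity: for ${\rm Re}\,\mu > \mu_0 + 1$ we have, for every test function $\varphi \in \mathcal S(\mathbb R^n)$,
\[
 \langle D(T(k)) R_\mu, \varphi\rangle = \langle R_\mu, D(T(k))^* \varphi\rangle,
\]
where $D(T(k))^* = (-1)^n D(T(k))$ is the formal adjoint with respect to the weight $\omega_k$, coming from \eqref{Dunkl_dist} applied $n$ times. Using Lemma \ref{Bernstein} with $a = \mu - \mu_0 - 1$, integration against $\omega_k\,dx$ over $\mathbb R_+^n$, and the factorization
\[
 \frac{b_k(\mu-\mu_0-1)}{\Gamma_n(\mu;k)} = \frac{1}{\Gamma_n(\mu-1;k)},
\]
which follows from $\Gamma_n(\mu;k) = \Gamma_n(\mu-1;k)\prod_{i=1}^n(\mu-1-k(i-1))$ and $b_k(\mu-\mu_0-1) = \prod_{i=1}^n(\mu-1-k(n-i))$ (the two products agree as $i$ ranges over $1,\dots,n$), one gets $D(T(k)) R_\mu = R_{\mu-1}$ on the half-plane ${\rm Re}\,\mu > \mu_0 + 1$; here one must be careful that no boundary terms appear in the integration by parts, which holds because $\chi(x) D(T(k))^*\varphi(x)$ and all lower-order derivatives of $D(x)^{\mu-\mu_0-1}$ are integrable against $\omega_k$ near $\partial\mathbb R_+^n$ when ${\rm Re}\,\mu$ is large enough.

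Next I would carry out the analytic continuation. Fix $\varphi \in \mathcal S(\mathbb R^n)$; the map $\mu \mapsto \langle R_\mu, \varphi\rangle$ is holomorphic on $\{{\rm Re}\,\mu > \mu_0\}$ as already noted. The recursion just established lets one extend it step by step: having defined a holomorphic $\mathcal S'(\mathbb R^n)$-valued map on $\{{\rm Re}\,\mu > \mu_0 - N\}$, define it on $\{{\rm Re}\,\mu > \mu_0 - N - 1\}$ by $R_\mu := D(T(k)) R_{\mu+1}$, which is holomorphic there because $D(T(k))$ is a continuous operator on $\mathcal S'(\mathbb R^n)$ and $\mu \mapsto R_{\mu+1}$ is holomorphic. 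On the overlap the new definition agrees with the old by the recursion already proven for large real part and the identity theorem for holomorphic functions. Iterating over $N \in \mathbb N$ produces the desired holomorphic extension to all of $\mathbb C$, and the recursion $D(T(k)) R_\mu = R_{\mu-1}$ holds on $\mathbb C$ by construction (or again by analytic continuation from ${\rm Re}\,\mu > \mu_0+1$). Uniqueness is immediate: any two $\mathcal S'(\mathbb R^n)$-valued holomorphic extensions agree on the open set $\{{\rm Re}\,\mu > \mu_0\}$, hence everywhere by the identity theorem applied to $\mu \mapsto \langle R_\mu, \varphi\rangle$ for each $\varphi$.

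The main obstacle I anticipate is the justification of the distributional Bernstein identity, i.e. verifying that integrating $D(T(k))^*\varphi$ against $g_\mu\,\omega_k$ over $\mathbb R_+^n$ really reproduces $\langle R_{\mu-1},\varphi\rangle$ with no contribution from the boundary hyperplanes $\{x_i = 0\}$ or from the reflection terms in the $T_i(k)$. One clean way around this is the alternative route sketched in the remark after Lemma \ref{Bernstein}: for ${\rm Re}\,a < -\mu_0$ one has $D(x)^a = \mathcal L_k g_{-a}(x)$ on $\mathbb R_+^n$ by \eqref{Laplace_density}, and Lemma \ref{Laplace_1}(2) gives $D(T(k))$ applied to a Laplace transform as a Laplace transform, directly yielding the identity at the level of the measures $R_\mu$ for ${\rm Re}\,\mu$ large; combined with the fact that the Dunkl operators act continuously on $\mathcal S'(\mathbb R^n)$ via \eqref{Dunkl_dist}, this sidesteps delicate boundary estimates. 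Everything else is routine: the Gamma-function identity is a one-line computation, and the continuation argument is the standard Riesz/Gindikin bootstrap.
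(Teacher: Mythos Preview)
Your approach is essentially the same as the paper's: establish the distributional recursion $D(T(k))R_\mu = R_{\mu-1}$ for $\mathrm{Re}\,\mu$ large via the Bernstein identity, then bootstrap by setting $R_\mu := D(T(k))R_{\mu+1}$. The paper resolves your ``main obstacle'' more simply than either of your proposed workarounds: rather than starting at $\mathrm{Re}\,\mu > \mu_0 + 1$ and worrying about boundary contributions, it takes $\mathrm{Re}\,\mu > \mu_0 + n + 1$. Then $g_\mu$, extended by zero outside $\mathbb R_+^n$, lies in $C^n(\mathbb R^n)$, the pointwise identity $D(T(k))g_\mu = g_{\mu-1}$ holds on all of $\mathbb R^n$, and the distributional recursion follows immediately from the skew-symmetry of the $T_i(k)$ in $L^2(\mathbb R^n,\omega_k)$ with no boundary terms to analyze.

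Your Laplace-transform alternative, as stated, does not quite close the gap: Lemma~\ref{Laplace_1}(2) lets you compute $D(T(k))$ acting on the \emph{function} $z\mapsto \mathcal L_k g_\mu(z)=D(z)^{-\mu}$, which recovers the pointwise Bernstein identity, but it does not by itself tell you how $D(T(k))$ acts on the \emph{distribution} $R_\mu$. To turn that into the distributional recursion you would still need to pass through an integration-by-parts step against $\omega_k$, which brings you back to the regularity issue. The paper's fix of simply increasing the threshold on $\mathrm{Re}\,\mu$ is the cleaner route.
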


\begin{proof} The proof is analogous to the case of symmetric
cones, c.f. Chapter VII of \cite{FK}. First notice that
$$ \frac{\Gamma_n(\mu;k)}{\Gamma_n(\mu-1;k)} = b_k(\mu-\mu_0-1).$$
For $\text{Re}\, \mu > \mu_0$ we extend $g_\mu$ to a locally integrable, tempered function  on $\mathbb R^n$ by putting $g_\mu:= 0$ on $\mathbb R^n \setminus \mathbb R_+^n$. 
If $\text{Re}\, \mu > \mu_0+n+1,$ then $g_\mu \in C^n(\mathbb R^n)$, and Lemma \ref{Bernstein} implies that
 $$D\bigl(T(k)\bigr)\, g_\mu = 
  g_{\mu-1}\quad \text{ on } \,\mathbb R^n.$$
From  \eqref{Dunkl_dist} and the skew-symmetry of the Dunkl operators $T_i(k)$ in $L^2(\mathbb R^n, \omega_k)$ it now follows that 
\begin{equation}\label{continuation} D\bigl(T(k)\bigr) R_\mu = R_{\mu-1}.\end{equation}
This formula recursively defines tempered distributions $R_\mu$ for all $\mu\in \mathbb C$ 
in such a way that the mapping
$ \,\mu \mapsto  R_\mu  $
is holomorphic on $\mathbb C$. The uniqueness is clear. 
\end{proof}

\begin{definition}
For a distribution $u\in \mathcal S^\prime(\mathbb R^n)$ and $\sigma\in S_n$ define $u^\sigma \in \mathcal S^\prime(\mathbb R^n)$ by
$$ \langle u^\sigma, \varphi \rangle :=  \big\langle u, \varphi^{\sigma^{-1}}\big\rangle, $$
where $\varphi^\sigma := \varphi\circ \sigma^{-1}$ for functions $\varphi: \mathbb R^n\to \mathbb C.$ 
\end{definition}

\begin{lemma}\label{remark_Riesz} The Riesz distributions $R_\mu\,, \mu \in \mathbb C\,$ 
have the following properties: 
\begin{enumerate}\itemsep=+1pt
 \item[\rm{(1)}] $R_\mu$ is 
$S_n$-invariant, i.e. $R_\mu^\sigma = R_\mu$ for all $\sigma\in S_n$. 
\item[\rm{(2)}] The support of $R_\mu$ is contained in $\overline{\mathbb R_+^n},$ i.e. $R_\mu \in \mathcal S_+^\prime(\mathbb R^n).$ 
\item[\rm{(3)}]  $\displaystyle D(x) R_\mu = \prod_{j=1}^n (\mu-k(j-1)) \cdot R_{\mu+1}.$
\end{enumerate}
\end{lemma}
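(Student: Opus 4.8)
The plan is to prove all three statements first in the range $\mathrm{Re}\,\mu > \mu_0$, where $R_\mu$ is given by the explicit integral formula of Definition \ref{Dunkl_Riesz}, and then extend to all $\mu \in \mathbb C$ by the holomorphy of $\mu \mapsto R_\mu$ established in Corollary \ref{main}, together with the identity theorem for holomorphic functions. The point is that each of (1), (2), (3) asserts an identity of the form $\langle \Phi(R_\mu), \varphi\rangle = \langle \Psi(R_\mu),\varphi\rangle$ for fixed $\varphi \in \mathcal S(\mathbb R^n)$, where both sides depend holomorphically on $\mu$ on all of $\mathbb C$ (for (2), one rephrases ``$\mathrm{supp}\,R_\mu \subseteq \overline{\mathbb R_+^n}$'' as ``$\langle R_\mu,\varphi\rangle = 0$ for every $\varphi \in \mathcal D(\mathbb R^n)$ with $\mathrm{supp}\,\varphi \cap \overline{\mathbb R_+^n} = \emptyset$''); so it suffices to verify each identity on the half-plane $\mathrm{Re}\,\mu > \mu_0$, and there it is a matter of manipulating the defining integral.

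For (1): since $\varphi \mapsto \varphi^{\sigma^{-1}}$ is a change of variables $x \mapsto \sigma x$ on $\mathbb R_+^n$, and both $\omega_k$ and $D(x) = \prod_i x_i$ and the domain $\mathbb R_+^n$ are invariant under the coordinate permutation $\sigma$, the integral $\frac{1}{d_n(k)\Gamma_n(\mu;k)}\int_{\mathbb R_+^n}\varphi(x)D(x)^{\mu-\mu_0-1}\omega_k(x)\,dx$ is unchanged, hence $\langle R_\mu^\sigma,\varphi\rangle = \langle R_\mu,\varphi^{\sigma^{-1}}\rangle = \langle R_\mu,\varphi\rangle$ for $\mathrm{Re}\,\mu > \mu_0$. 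For (2): the defining integral only involves values of $\varphi$ on $\mathbb R_+^n$, so it vanishes whenever $\varphi$ is supported away from $\overline{\mathbb R_+^n}$; equivalently $R_\mu \in \mathcal S_+^\prime(\mathbb R^n)$. For (3): multiplication by $D(x)$ inside the integral for $R_\mu$ turns $D(x)^{\mu - \mu_0 - 1}$ into $D(x)^{(\mu+1)-\mu_0 - 1}$, so $\langle D R_\mu,\varphi\rangle = \langle R_\mu, D\varphi\rangle = \frac{1}{d_n(k)\Gamma_n(\mu;k)}\int_{\mathbb R_+^n}\varphi(x) D(x)^{(\mu+1)-\mu_0-1}\omega_k(x)\,dx$; comparing with the definition of $R_{\mu+1}$ and using $\Gamma_n(\mu+1;k)/\Gamma_n(\mu;k) = \prod_{j=1}^n (\mu - k(j-1))$ (immediate from $\Gamma_n(\mu;k) = \prod_{j=1}^n \Gamma(\mu-k(j-1))$ and the functional equation $\Gamma(a+1) = a\Gamma(a)$) gives $D(x)R_\mu = \prod_{j=1}^n(\mu-k(j-1))\cdot R_{\mu+1}$ on $\mathrm{Re}\,\mu > \mu_0$.

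Finally I would invoke holomorphy: for fixed $\varphi \in \mathcal S(\mathbb R^n)$ and fixed $\sigma$, the maps $\mu \mapsto \langle R_\mu^\sigma,\varphi\rangle$, $\mu\mapsto \langle R_\mu,\varphi\rangle$, $\mu\mapsto \langle D R_\mu,\varphi\rangle$, and $\mu \mapsto \prod_{j=1}^n(\mu-k(j-1))\langle R_{\mu+1},\varphi\rangle$ are all entire (the last because $\mu\mapsto R_{\mu+1}$ is entire and the prefactor is a polynomial; and $\mu\mapsto \langle D R_\mu,\varphi\rangle = \langle R_\mu, D\varphi\rangle$ is entire since $D\varphi \in \mathcal S(\mathbb R^n)$ and $\mu\mapsto R_\mu$ is $\mathcal S^\prime$-valued holomorphic by Corollary \ref{main}). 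Having shown they agree on the nonempty open set $\{\mathrm{Re}\,\mu > \mu_0\}$, they agree on all of $\mathbb C$, which proves (1)--(3) for every $\mu$. The only mild subtlety — and the one place to be slightly careful — is the reformulation of the support statement (2) as a family of holomorphic identities and the observation that the extension in Corollary \ref{main} is produced precisely by applying the differential-reflection operator $D(T(k))$, which does not enlarge supports beyond $\overline{\mathbb R_+^n}$; alternatively one argues directly that a holomorphic limit of distributions supported in the closed set $\overline{\mathbb R_+^n}$ is again supported there. This is routine rather than a genuine obstacle.
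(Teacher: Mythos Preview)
Your proof is correct and follows exactly the paper's approach: the paper simply states that all three properties are obvious for $\mathrm{Re}\,\mu>\mu_0$ and follow for general $\mu$ by analytic continuation. You have supplied the routine details the paper omits, and your handling of the support statement (2) via the vanishing of $\mu\mapsto\langle R_\mu,\varphi\rangle$ for each fixed $\varphi$ supported off $\overline{\mathbb R_+^n}$ is the natural way to cast it as an identity amenable to analytic continuation.
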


All three properties are obvious for $\,{\rm Re}\,\mu >\mu_0$ and follow for 
general $\mu$ by analytic continuation.

\begin{theorem} \label{Laplace_Riesz} 
For all $\mu\in \mathbb C$  and $z\in H_n(0),$ 
\begin{equation}\label{Laplace_Rieszf} \mathcal L_k R_\mu(z) = D(z)^{-\mu}.\end{equation}
\end{theorem}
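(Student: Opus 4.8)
The plan is to establish \eqref{Laplace_Rieszf} first for $\mathrm{Re}\,\mu > \mu_0$, where it is already essentially known, and then to extend it to all $\mu \in \mathbb C$ by analytic continuation, using the recursion from Corollary \ref{main} together with the behaviour of the Dunkl operators under the Laplace transform of distributions. For $\mathrm{Re}\,\mu > \mu_0$ the Riesz measure $R_\mu$ is the Radon measure with density $g_\mu \omega_k$ supported in $\overline{\mathbb R_+^n}$, so by Remark \ref{Laplace_measure} its distributional Laplace transform coincides with the integral $\mathcal L_k g_\mu(z) = \int_{\mathbb R_+^n} E_k^A(-x,z) g_\mu(x)\omega_k(x)dx$, which by Theorem \ref{laplace_power} (see \eqref{Laplace_density}) equals $D(z)^{-\mu}$ on $H_n(0)$. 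This settles the base case.

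Next I would prove a compatibility lemma: for $u \in \mathcal S_+^\prime(\mathbb R^n)$ one has $\mathcal L_k\bigl(T_i(k)u\bigr)(z) = -z_i\,\mathcal L_k u(z)$ on $H_n(0)$, and hence $\mathcal L_k\bigl(D(T(k))u\bigr)(z) = (-1)^n D(z)\,\mathcal L_k u(z)$. This is the analogue for distributions of Lemma \ref{Laplace_1}(2), and it is obtained directly from the definition $\mathcal L_k u(z) = \langle u_x, \chi(x)E_k^A(x,-z)\rangle$ together with the defining property $\langle T_i(k)u,\varphi\rangle = -\langle u, T_i(k)\varphi\rangle$: one needs $T_i(k)\bigl(\chi(x)E_k^A(x,-z)\bigr) = -z_i\,\chi(x)E_k^A(x,-z)$ up to a term supported away from $\overline{\mathbb R_+^n}$ (which is annihilated by $u$), using that $T_i(k)E_k^A(\cdot,-z) = -z_i E_k^A(\cdot,-z)$ and that $\chi \equiv 1$ near $\mathrm{supp}\,u$; the error terms involve derivatives of $\chi$ and the reflection part acting across $\chi$, all supported outside a neighbourhood of $\overline{\mathbb R_+^n}$, so they drop out.

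With this in hand, the extension is routine. By Corollary \ref{main}, $D(T(k)) R_\mu = R_{\mu-1}$ for all $\mu$, so applying $\mathcal L_k$ and the compatibility lemma gives $(-1)^n D(z)\,\mathcal L_k R_\mu(z) = \mathcal L_k R_{\mu-1}(z)$ on $H_n(0)$. Both sides of \eqref{Laplace_Rieszf}, namely $z \mapsto \mathcal L_k R_\mu(z)$ and $z \mapsto D(z)^{-\mu}$, are holomorphic in $\mu$ for each fixed $z \in H_n(0)$ (holomorphy in $\mu$ of $\mathcal L_k R_\mu(z) = \langle (R_\mu)_x, \chi E_k^A(x,-z)\rangle$ follows from holomorphy of $\mu \mapsto R_\mu$ in $\mathcal S^\prime$), they agree for $\mathrm{Re}\,\mu > \mu_0$, and — using the identity just derived for the distributional side and the trivial identity $D(z)^{-(\mu-1)} = D(z)\cdot D(z)^{-\mu}$ for the explicit side — a shift by $1$ in $\mu$ transports the equality from any strip $\{m < \mathrm{Re}\,\mu \le m+1\}$ to the next one down. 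Hence \eqref{Laplace_Rieszf} holds for all $\mu \in \mathbb C$, first on $H_n(0)$ and then everywhere by the identity theorem.

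The main obstacle I anticipate is the careful bookkeeping in the compatibility lemma: one must verify that $T_i(k)$ applied to the cutoff-truncated kernel $\chi(x)E_k^A(x,-z)$ differs from $-z_i\,\chi(x)E_k^A(x,-z)$ only by terms supported in $\{\chi \ne 1\} \subseteq \mathbb R^n \setminus U$ for a neighbourhood $U$ of $\overline{\mathbb R_+^n}$, which requires treating the nonlocal reflection terms $\tfrac{k}{x_i-x_j}\bigl(1-\sigma_{ij}\bigr)$ with some care since $\sigma_{ij}$ can move a point of $\overline{\mathbb R_+^n}$ to another point of $\overline{\mathbb R_+^n}$; however, since $\chi$ is $S_n$-invariant near $\overline{\mathbb R_+^n}$ one may choose $\chi$ itself $S_n$-invariant, so that $\sigma_{ij}$ commutes with multiplication by $\chi$ and the difference $T_i(k)(\chi\,\Phi) - \chi\,T_i(k)\Phi$ with $\Phi = E_k^A(\cdot,-z)$ reduces to $(\partial_i\chi)\cdot\Phi$, which is supported where $\chi$ is non-constant, hence outside $U$ and annihilated by $u$. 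Once this point is handled cleanly, the rest of the argument is a direct analytic continuation.
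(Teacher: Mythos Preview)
Your approach is correct in outline but takes a detour the paper avoids entirely. The paper's proof is a one-line analytic continuation in $\mu$: for fixed $z\in H_n(0)$ the map $\mu\mapsto \mathcal L_kR_\mu(z)=\langle R_\mu,\chi E_k^A(\cdot,-z)\rangle$ is holomorphic on all of $\mathbb C$ (since $\mu\mapsto R_\mu$ is holomorphic in $\mathcal S'$ and $\chi E_k^A(\cdot,-z)\in\mathcal S$), it agrees with the entire function $\mu\mapsto D(z)^{-\mu}$ on the half-plane $\mathrm{Re}\,\mu>\mu_0$, hence everywhere. You actually state this observation yourself, so the compatibility lemma and the strip-by-strip recursion are superfluous --- the identity theorem in $\mu$ finishes the argument before any of that machinery is needed.

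Two further remarks on the detour. First, your compatibility lemma has a sign slip: from $\langle T_i(k)u,\varphi\rangle=-\langle u,T_i(k)\varphi\rangle$ and $T_i(k)E_k^A(\cdot,-z)=-z_i\,E_k^A(\cdot,-z)$ one gets $\mathcal L_k\bigl(T_i(k)u\bigr)(z)=+z_i\,\mathcal L_ku(z)$, hence $\mathcal L_k\bigl(D(T(k))u\bigr)(z)=D(z)\,\mathcal L_ku(z)$ without the $(-1)^n$; this is in fact what your consistency check $D(z)^{-(\mu-1)}=D(z)\cdot D(z)^{-\mu}$ requires. Second, your handling of the cutoff (taking $\chi$ to be $S_n$-invariant so that the reflection terms commute with multiplication by $\chi$, leaving only $(\partial_i\chi)\Phi$ supported off $\overline{\mathbb R_+^n}$) is the right idea and would work, but again none of this is needed for the theorem at hand. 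The final phrase ``first on $H_n(0)$ and then everywhere by the identity theorem'' is a non sequitur: $H_n(0)$ is the full domain of the statement.
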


\begin{proof}  
Let $z \in H_n(0).$ We have 
$$ \mathcal L_k R_\mu(z) = \langle R_\mu, \chi E_k^A(\,.\, , -z)\rangle, $$ where 
$\chi E_k^ A(\,.\,,-z) \in \mathcal S(\mathbb R^n).$ Thus by the previous corollary, 
the mapping $\mu \mapsto \mathcal L_kR_\mu(z)$ extends analytically to $\mathbb C$. On the other hand, we already know that for $\text{Re}\, \mu > \mu_0$ and $z\in H_n(0),$
$\, \mathcal L_kR_\mu(z) = \mathcal L_kg_\mu(z) = D(z)^{-\mu}.\,$ Analytic continuation implies the assertion.
\end{proof}

\begin{corollary}\label{Riesz_null}  $R_0 = \delta_0$. 
 \end{corollary}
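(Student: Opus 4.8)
The plan is to identify $R_0$ by matching Laplace transforms and invoking injectivity. By Theorem \ref{Laplace_Riesz}, we have $\mathcal L_k R_0(z) = D(z)^{-0} = \prod_{j=1}^n z_j^{\,0} = 1$ for all $z\in H_n(0)$. On the other hand, Example \ref{ex_delta_0} gives $\mathcal L_k(\delta_0) = 1$ on $H_n(0)$ as well. Thus $\mathcal L_k(R_0 - \delta_0)(z) = 0$ for all $z\in H_n(0)$, and in particular $\mathcal L_k(R_0-\delta_0)(\underline s + iy) = 0$ for every $y\in\mathbb R^n$ and any fixed $s>0$.

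To conclude $R_0 = \delta_0$ it then suffices to check that $R_0 - \delta_0$ lies in $\mathcal S_+^\prime(\mathbb R^n)$ so that Theorem \ref{inj_distributions} applies. This is immediate: $\delta_0$ is supported at the origin, which lies in $\overline{\mathbb R_+^n}$, and by Lemma \ref{remark_Riesz}(2) the distribution $R_0$ is supported in $\overline{\mathbb R_+^n}$; hence so is their difference. Therefore Theorem \ref{inj_distributions} yields $R_0 - \delta_0 = 0$, that is, $R_0 = \delta_0$.

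I do not anticipate any real obstacle here, since both ingredients — the Laplace transform formula for $R_\mu$ and the injectivity theorem for the Laplace transform on $\mathcal S_+^\prime(\mathbb R^n)$ — are already available from the preceding sections; the only point requiring (minor) care is the bookkeeping that the difference $R_0 - \delta_0$ has support in $\overline{\mathbb R_+^n}$, which is what allows the distributional injectivity result to be invoked rather than just the injectivity statement for locally integrable functions.
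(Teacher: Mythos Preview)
Your proof is correct and follows essentially the same approach as the paper: compute $\mathcal L_k R_0 = 1$ via Theorem~\ref{Laplace_Riesz}, compare with $\mathcal L_k\delta_0 = 1$ from Example~\ref{ex_delta_0}, and conclude by the injectivity Theorem~\ref{inj_distributions}. The only difference is that you make explicit the routine verification that $R_0 - \delta_0 \in \mathcal S_+^\prime(\mathbb R^n)$, which the paper leaves implicit.
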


\begin{proof}
Theorem
 \ref{Laplace_Riesz} shows that $\mathcal L_kR_0 = 1$ on $H_n(0)$, and the statement follows by Example \ref{ex_delta_0} and the 
injectivity of the Dunkl-type Laplace transform of tempered distributions (Theorem \ref{inj_distributions}). 
\end{proof}

 In the analysis on symmetric cones, there is a famous result by Gindikin characterizing
 those Riesz distributions which are actually positive measures. Their indices are exactly those belonging to the so-called Wallach set, which plays for example an important role in the study of Hilbert spaces of holomorphic functions on symmetric domains. 

Motivated by these facts, we are now going to investigate for which indices the Dunkl type Riesz distributions $R_\mu$ are actually (positive) measures. 
We expect that in analogy to the case of symmetric cones, the distribution $R_\mu$ is a positive measure if and only if $\mu$ belongs to the generalized Wallach set
$$ W_k = \{0,k,\ldots,k(n-1)=\mu_0\}\,\cup\, ]\mu_0,\infty[\,.$$ 
We know already that $R_\mu$ is a positive measure if $\mu = 0$ or if $\mu \in \mathbb R$ with $\mu > \mu_0.$ 
In the following theorem,  we consider the Wallach points  $rk$ with $r\in \{1,2, \ldots, n-1\}.$ We start with some notation. 
For an integer $r$  with $0\leq r \leq n-1,$ we denote by $\partial_r(\mathbb R_+^n)$ the rank $r$ part of the (stratified) boundary $\partial(\mathbb R_+^n)$, which is given by 
$$\partial_r(\mathbb R_+^n) = \bigcup_{\sigma\in  S_n}\bigl\{x\in \overline{\mathbb R_+^n}\,: x_{\sigma(r+1)} = \cdots = x_{\sigma(n)} = 0\bigr\}.$$ 
Now fix $r\in \{1, \ldots, n-1\}$ and consider the factorization
$\mathbb R^n = \mathbb R^r\times \mathbb R^{n-r}$. We write $x\in \mathbb R^n$ as 
$$x= (x^\prime, x^{\prime\prime}) \text{ with }\, x^\prime = (x_1, \ldots, x_r), \, x^{\prime\prime} = (x_{r+1}, \ldots, x_n).$$
We further introduce the notations 
$$D(x^\prime) := x_1 \cdots x_r\,, \quad \omega_k(x^\prime) := \prod_{1\leq i < j \leq r} |x_i-x_j|^{2k}. $$ 
Note that $\omega_k(x^\prime) = 1$ if $r=1.$

We denote by $R_\mu^\prime$ the Dunkl-type Riesz distribution on $\mathbb R^r$ associated with root system $A_{r-1}$ and the same 
multiplicity $k>0$. Notice that here $k_0= k(r-1).$ If $r=1$, then the Dunkl setting degenerates,  and $R_\mu^\prime$ coincides with
the classical Riesz distribution (also called Riemann-Liouville distribution) on $\mathbb R$, which is defined  by
$$ \langle R_\mu^\prime, \varphi\rangle = \frac{1}{\Gamma(\mu)}\int_{\mathbb R_+} \varphi(x) x^{\mu-1} dx \quad \text{ for } \,\text{Re}\, \mu >0.$$
This distribution extends holomorphically to all $\mu \in \mathbb C$ such that $\frac{d}{dx}(R_\mu^\prime) = R_{\mu-1}^\prime $ for all 
$\mu \in \mathbb C$, c.f. \cite[Section 2.3]{FJ}.

\begin{theorem}\label{maintheorem} For $r \in \{1, \ldots, n-1\}, $ the Riesz distribution $R_{kr}$ is a positive Radon measure, namely
\begin{equation}\label{Riesz_diskret} R_{kr}  =\frac{1}{n!} \cdot \! 
\sum_{\sigma\in S_n} (R_{kn}^\prime \otimes 
\delta_0^{\prime\prime } )^\sigma,\end{equation}
where $\delta_0^{\prime\prime}$ denotes the point measure  in $0\in \mathbb R^{n-r}.$
The support of $R_{kr}$ is given by $\partial_r(\mathbb R_+^n).$ 
\end{theorem}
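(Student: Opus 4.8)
The plan is to identify the right-hand side of \eqref{Riesz_diskret}, which I denote by $S_{kr}$, with $R_{kr}$ by means of the injectivity of the Laplace transform of tempered distributions (Theorem~\ref{inj_distributions}). First note that $S_{kr}$ is manifestly a positive Radon measure with the asserted support: since $kn>k(r-1)$, the rank-$r$ Riesz distribution $R_{kn}^\prime$ is, by definition, the absolutely continuous measure on $\mathbb R_+^r$ with density $\frac{1}{d_r(k)\Gamma_r(kn;k)}\,D(x^\prime)^{k(n-r+1)-1}\omega_k(x^\prime)$, which is locally finite on $\overline{\mathbb R_+^r}$ because $k(n-r+1)-1>-1$; hence each $(R_{kn}^\prime\otimes\delta_0^{\prime\prime})^\sigma$ is a positive Radon measure with support $\sigma\big(\overline{\mathbb R_+^r}\times\{0\}\big)$, and the normalized sum $S_{kr}$ is a positive Radon measure with support $\bigcup_{\sigma\in S_n}\sigma\big(\overline{\mathbb R_+^r}\times\{0\}\big)=\partial_r(\mathbb R_+^n)$. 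Both $R_{kr}$ and $S_{kr}$ lie in $\mathcal S_+^\prime(\mathbb R^n)$ (for $R_{kr}$ by Lemma~\ref{remark_Riesz}(2)) and are $S_n$-invariant, so by Theorem~\ref{inj_distributions} it suffices to prove $\mathcal L_kS_{kr}=\mathcal L_kR_{kr}$ on $H_n(0)$. Since $\mathcal L_kR_{kr}(z)=D(z)^{-kr}$ by Theorem~\ref{Laplace_Riesz}, the whole proof reduces to the identity $\mathcal L_kS_{kr}(z)=D(z)^{-kr}$ for $z\in H_n(0)$.

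To compute $\mathcal L_kS_{kr}$, I would use that $S_{kr}$ is an $S_n$-invariant tempered Radon measure supported in $\overline{\mathbb R_+^n}$: by Remark~\ref{Laplace_measure} and the symmetrization argument of the remarks following \eqref{Laplacedef}, $\mathcal L_kS_{kr}(z)=\int_{\overline{\mathbb R_+^n}}J_k^A(x,-z)\,dS_{kr}(x)$; as $J_k^A(\cdot,-z)$ is $S_n$-invariant, the $n!$ summands defining $S_{kr}$ all contribute equally, and
\[\mathcal L_kS_{kr}(z)=\frac{1}{d_r(k)\Gamma_r(kn;k)}\int_{\mathbb R_+^r}J_k^A\big((x^\prime,0),-z\big)\,D(x^\prime)^{k(n-r+1)-1}\omega_k(x^\prime)\,dx^\prime,\]
the integral converging absolutely for $z\in H_n(0)$ by Lemma~\ref{estim2}(2). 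Now I would apply the factorization $J_k^A\big((x^\prime,0),-z\big)=e^{-\langle(x^\prime,0),\underline 1\rangle}\,J_k^A\big((x^\prime,0),\underline 1-z\big)$ of Lemma~\ref{estim2}(1), insert the $_0F_0$-expansion \eqref{power-j-a} of $J_k^A$ (in $n$ variables), and use the stability of Jack polynomials \cite{Sta}: the $n$-variable Jack polynomial $C_\lambda^\alpha$ evaluated at $(x^\prime,0,\dots,0)$ equals the $r$-variable Jack polynomial $C_\lambda^\alpha(x^\prime)$ when $l(\lambda)\le r$ and vanishes otherwise. This expresses the integrand as a series over $\lambda\in\Lambda_r^+$ whose dependence on $x^\prime$ is through $e^{-\langle(x^\prime,0),\underline 1\rangle}\,C_\lambda^\alpha(x^\prime)$.

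Interchanging summation and integration — which I would justify for $z$ in a small neighbourhood of $\underline 1$ by the bounds \eqref{norm_Jack}, \eqref{jack_hilf}, \eqref{jack_estim}, and then for all $z\in H_n(0)$ by analytic continuation on the connected domain $H_n(0)$, exactly as in the proof of Theorem~\ref{laplace_power} — and invoking Macdonald's integral formula (Lemma~\ref{lemma_Macdonald}) for the root system $A_{r-1}$ with parameter $\mu=kn>k(r-1)$, the coefficient of $C_\lambda^\alpha(\underline 1-z)$ (an $n$-variable Jack polynomial) becomes a constant times $[kn]_\lambda^k$ times the ratio of the value at $(1,\dots,1)$ of $C_\lambda^\alpha$ in $r$ variables to its value at $(1,\dots,1)$ in $n$ variables. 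By the principal specialization formula for Jack polynomials (\cite{Sta}, see also \cite{FW}) this ratio equals $[rk]_\lambda^k/[nk]_\lambda^k$; since $kn=nk$ and all normalization constants cancel, I obtain $\mathcal L_kS_{kr}(z)=\sum_{\lambda\in\Lambda_r^+}\frac{[rk]_\lambda^k}{|\lambda|!}\,C_\lambda^\alpha(\underline 1-z)$. As $[rk]_\lambda^k=\prod_{j=1}^{l(\lambda)}\big(k(r-j+1)\big)_{\lambda_j}$ vanishes as soon as $l(\lambda)>r$ (its $j=r+1$ factor being $(0)_{\lambda_{r+1}}=0$), this sum equals the full $n$-variable series $\sum_{\lambda\in\Lambda_n^+}\frac{[rk]_\lambda^k}{|\lambda|!}C_\lambda^\alpha(\underline 1-z)={}_1F_0^\alpha(rk;\underline 1-z)$, which by Yan's binomial formula \eqref{Yan} equals $D\big(\underline 1-(\underline 1-z)\big)^{-rk}=D(z)^{-kr}$. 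Hence $\mathcal L_kS_{kr}=\mathcal L_kR_{kr}$ on $H_n(0)$, and Theorem~\ref{inj_distributions} gives $R_{kr}=S_{kr}$, the positive Radon measure described above with support $\partial_r(\mathbb R_+^n)$.

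I expect the main obstacle to be the Jack-polynomial bookkeeping in the third step, in particular correctly invoking and combining the stability property with the principal-specialization ratio; the decisive point is that $[rk]_\lambda^k$ vanishes for $l(\lambda)>r$, which is exactly what makes the rank-$r$ integral (carried by $R_{kn}^\prime$) reassemble into the rank-$n$ binomial series ${}_1F_0^\alpha(rk;\,\cdot\,)$ and so into $D(z)^{-kr}$. A secondary technical point is the termwise integration, which I would handle by absolute convergence near $z=\underline 1$ together with analytic continuation, as in the proof of Theorem~\ref{laplace_power}.
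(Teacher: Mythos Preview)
Your proposal is correct and follows essentially the same route as the paper's own proof: reduce to a Laplace-transform identity via Theorem~\ref{inj_distributions}, compute $\mathcal L_kS_{kr}$ as the $\mathbb R_+^r$-integral against $J_k^A((x',0),-z)$, expand $J_k^A$ as a $_0F_0$ series using the factorization of Lemma~\ref{estim2}(1) and the stability of Jack polynomials, integrate term by term via Lemma~\ref{lemma_Macdonald}, simplify with the specialization ratio $C_\lambda^\alpha(1_r)/C_\lambda^\alpha(1_n)=[kr]_\lambda^k/[kn]_\lambda^k$, and finally identify the resulting series with $D(z)^{-kr}$ by Yan's binomial formula together with the vanishing of $[kr]_\lambda^k$ for $l(\lambda)>r$. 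The only cosmetic difference is that you expand around $z=\underline 1$ (writing the series in $C_\lambda^\alpha(\underline 1-z)$), whereas the paper substitutes $z\mapsto z+1_n$ and expands around $0$; the analytic-continuation step and the termwise-integration justification are handled identically.
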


\begin{proof} Notice first that the Riesz distribution $R_{kn}^\prime$ on $\mathbb R^r$ is a positive measure, as $kn > k(r-1).$ Its support is  $\overline{\mathbb R_+^r},$ and therefore  the distribution on the
right side of  \eqref{Riesz_diskret} is an $S_n$-invariant positive tempered Radon measure which we denote by $m_{kr}$. It is clear that $m_{kr} \in \mathcal S_+^\prime(\mathbb R^n)$ with
$\,\textrm{supp} (m_{kr}) = \partial_r(\mathbb R_+^n).$ 
By the injectivity of the Laplace transform $\mathcal L_k$ on $\mathcal S_+^\prime(\mathbb R^n)$ (Theorem \ref{inj_distributions}) and
in view of Theorem \ref{Laplace_Riesz}, it suffices to prove that
\begin{equation}\label{id_laplace} \mathcal L_k(m_{kr})(z) = D(z)^{-kr} \quad \forall z \in H_n(0). \end{equation}
Consider first an arbitrary $S_n$-invariant test function $\varphi \in \mathcal S(\mathbb R^n).$ Then
\begin{align}\label{testformel} \langle m_{kr}, \varphi \rangle &= \frac{1}{n!}\sum_{\sigma\in S_n} \big\langle (R_{kn}^\prime \otimes \delta_0^{\prime\prime} )^\sigma, \varphi \big\rangle\,= \,  \langle R_{kn}^\prime \otimes \delta_0^{\prime\prime}, \varphi\rangle \,=\notag\\ 
&=\, \frac{1}{d_r(k)\Gamma_r(kn;k)} \int_{\mathbb R_+^r} \varphi(x^\prime, 0) \cdot 
D(x^\prime)^{kn-k(r-1)-1} \omega_k(x^\prime)dx^\prime.\end{align}
Now let $z \in H_n(0).$ As $m_{kr}$ is $S_n$-invariant, we have
$$ \mathcal L_k(m_{kr})(z) = \langle m_{kr}\,, \chi E_k^A(\,.\,,-z)\rangle =
\langle m_{kr}\,, \chi J_k^A(\,.\,,-z)\rangle$$
with some $S_n$-invariant cutoff function $\chi\in C^\infty(\mathbb R^n)$ as in the definition of the type-$A$ Laplace transform  of 
distributions. Identity \eqref{testformel} therefore 
implies that
$$ \mathcal L_k(m_{kr})(z) =  \frac{1}{d_r(k)\Gamma_r(kn;k)} \int_{\mathbb R_+^r}  J_k^A\bigl((x^\prime,0), -z\bigr) D(x^\prime)^{k(n-r+1)-1} \omega_k(x^\prime)dx^\prime.$$
The proof of Eq. \eqref{id_laplace} will now be finished by the following Lemma.
\end{proof}

\begin{lemma} Fix $r \in \{1, \ldots, n-1\}.$ Then for all $z\in H_n(0),$
$$ \frac{1}{d_r(k)\Gamma_r(kn,k)}  \int_{\mathbb R_+^r} J_k^A\bigl((x^\prime,0),-z\bigr) D(x^\prime)^{k(n-r+1)-1} \omega_k(x^\prime)dx^\prime = 
D(z)^{-kr}.$$ 
\end{lemma}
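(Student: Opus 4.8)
The key observation is that the left-hand side is a type $A$ Laplace transform on $\mathbb R^r$, provided we can relate $J_k^A$ on $\mathbb R^n$ evaluated at a point with vanishing last $n-r$ coordinates to the Bessel function $J_k^A$ (or rather a suitable hypergeometric function) in $r$ variables. The natural tool is the Jack polynomial expansion \eqref{power-j-a}: since $C_\lambda^\alpha$ is homogeneous and symmetric, $C_\lambda^\alpha(x^\prime,0)$ is, up to the normalization constant $C_\lambda^\alpha(\underline 1)$ in $n$ variables, the Jack polynomial in $r$ variables evaluated at $x^\prime$ — but crucially it vanishes when $l(\lambda) > r$. So the first step is to restrict the sum in \eqref{power-j-a} to partitions $\lambda$ with at most $r$ parts and rewrite $J_k^A((x^\prime,0),-z)$ as an $r$-variable hypergeometric series in $x^\prime$ and $z$, keeping track of the ratio of $n$-variable to $r$-variable normalization constants $C_\lambda^\alpha(\underline 1)$.

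\textbf{Main computation.} With that expansion in hand, I would interchange summation and integration (justified exactly as in the proof of Theorem \ref{laplace_power}, using the uniform estimate \eqref{jack_estim} together with the exponential decay of $J_k^A$ from Lemma \ref{estim2}(2) to dominate; one first works on a region where $\|z\|_\infty$ is small and then analytically continues to all of $H_n(0)$, noting both sides are holomorphic there). The $\lambda$-th integral is then exactly of the Macdonald type evaluated in Lemma \ref{lemma_Macdonald}, but for the $A_{r-1}$ root system with parameter $\mu = kn$: indeed the exponent $k(n-r+1)-1 = kn - k(r-1) - 1 = \mu - \mu_0' - 1$ with $\mu_0' = k(r-1)$, which is precisely the shape required, and $\mathrm{Re}\,(kn) = kn > k(r-1) = \mu_0'$ so the hypothesis of Lemma \ref{lemma_Macdonald} holds. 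After pulling out $d_r(k)\Gamma_r(kn;k)$ and cancelling it against the prefactor, the sum collapses to $\sum_\lambda [kn]_\lambda^k \, C_\lambda^\alpha(\text{something})/|\lambda|!$ over $\lambda$ with $l(\lambda)\le r$, which by Yan's binomial formula \eqref{Yan} (applied in $r$ variables, with $a = kn$) sums to a product of the form $D(\underline 1 - \zeta)^{-kn}$ in $r$ variables. One must be careful here: the factorization Lemma \ref{estim2}(1) should be used to strip off a $\underline 1$-shift first (as in Theorem \ref{laplace_power}), so that the argument fed into \eqref{Yan} is small; alternatively reduce directly to the already-proven Theorem \ref{laplace_power} in $r$ variables.

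\textbf{Cleanest route and the obstacle.} In fact the slickest argument avoids re-deriving everything: $J_k^A((x^\prime,0),-z)$, as a function of $x^\prime \in \mathbb R_+^r$, equals the $r$-variable type $A$ Bessel function $J_k^{A,(r)}(-x^\prime, \zeta)$ for an appropriate $\zeta \in \mathbb C^r$ depending on $z$ — but this is \emph{not} literally true because the normalization constant $C_\lambda^\alpha(\underline 1)$ differs between $n$ and $r$ variables, so the coefficients don't match term by term. Hence one genuinely needs the explicit constant ratio, and the real work is bookkeeping: identifying precisely which $\zeta$ (or which rescaling) makes $\sum_{l(\lambda)\le r} [kn]_\lambda^k C_\lambda^{\alpha}(z)/(|\lambda|!\, C_\lambda^\alpha(\underline 1_n))$ equal to $D(z)^{-kr}$. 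I expect the answer to drop out because $C_\lambda^\alpha(\underline 1_n)/C_\lambda^\alpha(\underline 1_r)$ is itself expressible via generalized Pochhammer symbols $[n k \cdot \alpha^{-1}\text{-type}]_\lambda$ — concretely $C_\lambda^\alpha(\underline 1_n) = [n/\alpha]_\lambda^{(\alpha)} \cdot (\text{const})$ in the standard normalizations — and this Pochhammer factor is exactly what converts the exponent from $kn$ down to $kr$ in Yan's formula, i.e. $[kn]_\lambda^k / ([\text{ratio}]) = [kr]_\lambda^k$ in a way that makes \eqref{Yan} produce $D(\underline 1 - z)^{-kr}$. The main obstacle, therefore, is purely computational: correctly tracking these Jack polynomial normalization constants between $n$ and $r$ variables and verifying the Pochhammer identity that effects the shift $kn \rightsquigarrow kr$. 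Once that identity is in place, absolute convergence for small $\|z\|_\infty$ and analytic continuation to $H_n(0)$ finish the proof.
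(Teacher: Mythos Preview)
Your proposal is correct and follows essentially the same route as the paper: Jack-polynomial stability to restrict to $l(\lambda)\le r$, the factorization $J_k^A(x,-(z+\underline 1))=e^{-\langle x,\underline 1\rangle}J_k^A(x,-z)$ to set up the Macdonald integral in $r$ variables with $\mu=kn$, the normalization ratio $C_\lambda^\alpha(\underline 1_r)/C_\lambda^\alpha(\underline 1_n)=[kr]_\lambda^k/[kn]_\lambda^k$ to convert the Pochhammer from $[kn]_\lambda^k$ to $[kr]_\lambda^k$, and then Yan's binomial formula (in $n$ variables, noting $[kr]_\lambda^k=0$ for $l(\lambda)>r$ so the sum extends freely) to obtain $D(z+\underline 1)^{-kr}$, followed by analytic continuation. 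The only slip is in your middle paragraph, where you momentarily apply Yan with $a=kn$ in $r$ variables; your final paragraph corrects this, and the identity you anticipate there is exactly what the paper uses.
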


\begin{proof} As the number of variables is relevant in this proof, we shall  write 
$\, 1_n $ for the element $\underline 1 \in \mathbb R^n.$
Both sides of the stated identity are holomorphic as functions of $z\in  H_n(0),$ and therefore it  suffices to verify the identity for all arguments of the form $z + 1_n,\, \, z\in \mathbb C^n$ with $\|z\|_\infty < \epsilon$ for some $\epsilon \in \, ]0,1[.$
As in Theorem \ref{laplace_power},  we shall use the identity  $\, J_k^A(x, -z- 1_n) = J_k^A(x,-z)e^{-\langle x, 1_n\rangle}$ 
as well as the hypergeometric expansion  \eqref{power-j-a} of $J_k^A$.
The Jack polynomials in $n$ variables have the stability property 
$$  C_\lambda^\alpha(z_1, \ldots, z_r,0, \ldots, 0) = 
\begin{cases} C_\lambda^\alpha(z_1, \ldots, z_r) & \text{if }\, l(\lambda) \leq r\\
                    \,0\, & \text{otherwise},
                    \end{cases}$$
                    see \cite[Poposition 2.5]{Sta} together with \cite[formula (16)]{Ka}.
                    Hence with $\alpha = 1/k$, 
$$ J_k^A\bigl((x^\prime,0),-z\bigr) =  \sum_{\lambda \in \lambda_n^+}  \frac{C_\lambda^\alpha(x^\prime,0)\, C_\lambda^\alpha(-z)}{|\lambda|! \,C_\lambda^\alpha( 1_n)} =  
\sum_{\lambda\in \Lambda_r^+}  \frac{1}{|\lambda|!} C_\lambda^\alpha(x^\prime)\, \frac{C_\lambda^\alpha(-z)}{C_\lambda^\alpha(1_n)}.$$   
We therefore obtain  
\begin{align} \label{I(z)} I(z)&:= \int_{\mathbb R_+^r} J_k^A\bigl((x^\prime,0),-(z+1_n)\bigr) D(x^\prime)^{k(n-r+1)-1} \omega_k(x^\prime)dx^\prime \notag \\
&= \int_{\mathbb R_+^r} \Bigl(\sum_{\lambda\in \Lambda_r^+}  
  \frac{1}{|\lambda|!}\frac{ C_\lambda^\alpha(-z)}{C_\lambda^\alpha(1_n)}\,C_\lambda^\alpha(x^\prime)\Bigr)e^{-\langle x^\prime, 1_r\rangle} 
  D(x^\prime)^{k(n-r+1)-1} \omega_k(x^\prime)dx^\prime \notag \\
  & = \sum_{\lambda\in \Lambda_r^+}\frac{C_\lambda^\alpha(-z)}{|\lambda|!} 
  \cdot \frac{C_\lambda^\alpha(1_r)}{C_\lambda^\alpha( 1_n)}
  \int_{\mathbb R_+^r} \widetilde C_\lambda^\alpha(x^\prime) e^{-\langle x^\prime,1_r\rangle}
  D(x^\prime)^{k(n-r+1)-1} \omega_k(x^\prime)dx^\prime.
  \end{align}
Here the interchange of the summation and integral is justified by the dominated convergence theorem for  
$\|z\|_\infty< \epsilon  $ with $\epsilon <\frac{1}{r}\,$, because by  \eqref{jack_hilf},
$$ \sum_{\lambda\in \Lambda_r^+} \frac{1}{|\lambda|!}\big\vert 
  \widetilde C_\lambda^\alpha(-z)\,C_\lambda^\alpha(x^\prime)\big\vert \,\leq \sum_{m=0}^\infty \frac{1}{m!}(r\|z\|_\infty \|x^\prime\|_\infty)^m  \,\leq \, e^{r\|z\|_\infty\|x^\prime\|_1}\,.$$
  For $\lambda \in \Lambda_n^+$ we further have  
  $$ \frac{C_\lambda^\alpha( 1_r)}{C_\lambda^\alpha(1_n)} = \frac{[kr]_\lambda^k}{[kn]_\lambda^k}\,,$$
  see e.g. \cite[formula (17)]{Ka}.    Inserting this in \eqref{I(z)} and evaluating  the integral by means of formula \eqref{int_jack_1}, we 
  obtain (for $\|z\|_\infty $ small enough) that
 \begin{equation}\label{I_2} I(z) = d_r(k)\Gamma_r(kn;k) \sum_{\lambda\in \Lambda_r^+}[kr]_\lambda^k \cdot 
  \frac{C_\lambda^\alpha(-z)}{|\lambda|!}.\end{equation}
  On the other hand, Yan's binomial formula \eqref{Yan} gives, for small 
  $\|z\|_\infty$, 
  $$ D(z + 1_n)^{-kr} = \, _1F_0^\alpha(kr;-z) = \sum_{\lambda\in \Lambda_n^+} [kr]_\lambda^k \cdot \frac{C_\lambda^\alpha(-z)}{|\lambda|!}  .$$
If $l(\lambda) > r,$ then 
 $$ [kr]_\lambda^k = \prod_{j=1}^{l(\lambda)} \bigl(k(r-j+1)\bigr)_{\lambda_j}\, =0. 
 $$ 
Therefore
 $$ D(z + 1_n)^{-kr} = \sum_{\lambda\in \Lambda_r ^+}[kr]_\lambda^k \cdot 
  \frac{C_\lambda^\alpha(-z)}{|\lambda|!}.$$
This shows that for $\|z\|_\infty$ sufficiently small,
$$ I(z) =  d_r(k)\Gamma_r(kn;k) \cdot D(z+ 1_n)^{-kr}, $$
which finishes the proof of the lemma.
  \end{proof}

We mention that some partial results on the  distributions $R_{kr}$ are sketched  in \cite{L}.

We are now aiming at necessary conditions under which the Riesz distribution $R_\mu$ is a complex or even a positive measure. 
We know already that $R_\mu$ is a positive measure if $\mu$ belongs to the generalized Wallach set $W_k$, and that it is a complex (non-positive) measure 
 if $\mu \in \mathbb C\setminus \mathbb R$  with $\text{Re}\, \mu > \mu_0 = k(n-1).$ 
We shall use 
methods of Sokal \cite{S} for Riesz distributions on symmetric cones, 
as well as  the
following variant of a principle due to Shanbhag, Casalis and Letac (see \cite{CL} as well as \cite{S}):

\begin{lemma}[Shanbhag-Casalis-Letac prinicple in the Dunkl setting] \label{Shanbhag} Suppose that $m\in \mathcal S_+^\prime(\mathbb R^n) $ 
is a positive tempered Radon measure and $p\in \mathbb C[\mathbb R^n]$ is a polynomial which is real-valued and 
non-negative on \,{\rm supp}\,m, then
$$p\big(-T(k)\big)\mathcal L_km \geq 0  \text{ on }\,\mathbb R_+^n.$$   
\end{lemma}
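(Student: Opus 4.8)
The plan is to imitate the classical Shanbhag--Casalis--Letac argument, which rests on the fact that the Laplace transform turns multiplication by a nonnegative function into application of a nonnegative differential operator. First I would recall from Lemma \ref{Laplace_1}(2) that for a polynomial $p\in\mathbb C[\mathbb R^n]$ one has the identity $p\big(-T(k)\big)\mathcal L_km = \mathcal L_k(p\cdot m)$ wherever the latter converges; in the distributional setting of Remark \ref{Laplace_measure} this reads
\begin{equation*}
 p\big(-T(k)\big)\mathcal L_km(z) \,=\, \int_{\overline{\mathbb R_+^n}} p(x)\,E_k^A(x,-z)\,dm(x), \qquad z\in H_n(0),
\end{equation*}
by differentiating under the integral sign, which is legitimate because of the exponential decay of $E_k^A$ and its derivatives from Lemma \ref{estim2} together with the estimates \eqref{estimableitung} (the measure $m$ being tempered, the polynomial factor $p(x)$ causes no trouble once combined with $E_k^A(x,-z)$ for $\mathrm{Re}\,z>0$). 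Restricting now to real $z=\xi\in\mathbb R_+^n\subset H_n(0)$, the kernel $E_k^A(x,-\xi)$ is real and strictly positive for all $x\in\overline{\mathbb R_+^n}$ by the positivity of the Dunkl kernel recalled in Section \ref{Dunkl}.

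The key step is then simply to observe that the integrand $p(x)E_k^A(x,-\xi)$ is nonnegative $m$-almost everywhere: by hypothesis $p$ is real-valued and $\geq 0$ on $\mathrm{supp}\,m$, and $E_k^A(x,-\xi)>0$, so the product is $\geq 0$ throughout $\mathrm{supp}\,m$. Since $m$ is a positive measure, the integral is nonnegative, giving $p\big(-T(k)\big)\mathcal L_km(\xi)\geq 0$ for every $\xi\in\mathbb R_+^n$, which is the assertion. One should note here that $p\big(-T(k)\big)\mathcal L_km$ is a priori only the real-analytic (indeed holomorphic) function on $H_n(0)$ obtained by applying the differential-reflection operator $p(-T(k))$ to the holomorphic function $\mathcal L_km$, so ``$\geq 0$ on $\mathbb R_+^n$'' refers to this function evaluated at real points; no distributional subtlety arises.

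The main obstacle, and the only point requiring care, is the justification of differentiation under the integral sign when $m$ is merely a tempered measure rather than a finite one. The remedy is exactly the bound already available: for $z$ in a compact subset $K\subset H_n(0)$ one has $\mathrm{Re}\,z\geq\underline s$ for some $s>0$, and Lemma \ref{estim2}(2) gives $|E_k^A(x,-z)|\leq e^{-s\|x\|_1}$ for $x\in\overline{\mathbb R_+^n}$; applying a fixed monomial $\partial_z^\nu$ pulls down a factor that is polynomial in $x$ (since $\partial_{z_i}E_k^A(x,-z)$ involves the intertwining of $T_i(k)$ but still decays like $e^{-s'\|x\|_1}$ for any $s'<s$, cf. \eqref{estimableitung} applied in the $x$-variable after using $E_k^A(x,-z)=E_k^A(-z,x)$), so the dominating function $C_K\,e^{-(s/2)\|x\|_1}$ is $m$-integrable because $m$ is tempered. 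Hence the differentiation is justified uniformly on compacta, the interchange is valid, and the proof concludes as above. I would also remark in passing that taking $p\equiv 1$ recovers the positivity of $\mathcal L_km$ itself, and that the statement will be applied in Section \ref{5} with $m=R_\mu$ for suitable $\mu$ and with $p$ a product of coordinate functions adapted to the stratified boundary of $\overline{\mathbb R_+^n}$.
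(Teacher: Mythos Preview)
Your proof is correct and follows essentially the same route as the paper's: differentiate under the integral (justified via Remark \ref{Laplace_measure} and the decay of $E_k^A$) to obtain $p(-T(k))\mathcal L_km(\xi)=\int_{\overline{\mathbb R_+^n}} p(y)E_k^A(y,-\xi)\,dm(y)$, then use positivity of $E_k^A$ and of $p$ on $\mathrm{supp}\,m$. One minor inaccuracy in your closing remark: the lemma is applied in Section \ref{5} not with coordinate-function products but with the (renormalized) Jack polynomials $\widetilde C_\lambda^\alpha$, which are nonnegative on $\overline{\mathbb R_+^n}$ by their nonnegative monomial expansion.
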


\begin{proof} In view of Remark \ref{Laplace_measure} we have $\mathcal L_km \in \mathbb C^\infty(\mathbb R_+^n)$ and may differentiate under 
the integral. Thus for $x \in \mathbb R_+^n$,
$$ p\big(-T(k)\big) \mathcal L_km(x) = 
\int_{\overline{\mathbb R_+^n}} p(y) E_k^A(y, -x) dm(y)\, \geq 0.$$
\end{proof}

Recall from Theorem \ref{Laplace_Riesz} that for each $\mu \in \mathbb C$, 
$$ \mathcal L_k R_\mu(x) = D(x)^{-\mu} \quad \text{on } \,\mathbb R_+^n.$$
The following evalutation formula involving Dunkl operators associated with the (renormalized) Jack polynomials will be important lateron; it is an analogue of the formula 
on top of p. 245 in \cite{FK} for the spherical functions on  symmetric cones.

\begin{lemma}\label{eval_lemma} Let $\alpha = 1/k.$ Then 
for $\mu \in \mathbb C$ and each partition $\lambda$ of length $l(\lambda)\leq n$,
\begin{equation}\label{eval}  \widetilde C_\lambda^\alpha(T(k)) D(\underline 1-x)^{-\mu} \big\vert_{x=0} =\, [\mu]_\lambda^k\,.\end{equation}
 
\end{lemma}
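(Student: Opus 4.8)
The plan is to use the hypergeometric expansion of the Dunkl kernel together with the already-established Laplace transform identity \eqref{lap_power}. Recall Yan's binomial formula \eqref{Yan}: for $\|x\|_\infty$ small enough,
$$ D(\underline 1 - x)^{-\mu} = \,_1F_0^\alpha(\mu;x) = \sum_{\nu \in \Lambda_n^+} [\mu]_\nu^k\,\frac{C_\nu^\alpha(x)}{|\nu|!}. $$
Since the left-hand side of \eqref{eval} only involves the value at $x=0$ of a differential-reflection operator applied to this function, and since $\widetilde C_\lambda^\alpha(T(k))$ is a polynomial in the commuting operators $T_i(k)$, I would first justify that one may apply $\widetilde C_\lambda^\alpha(T(k))$ term by term to the series. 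Each term $\widetilde C_\lambda^\alpha(T(k)) C_\nu^\alpha(x)$ is a polynomial, homogeneous of degree $|\nu| - |\lambda|$ (because the leading symbol of $T_i(k)$ is $\partial_i$, which lowers degree by one, and the reflection part of $T_i(k)$ also lowers degree by one on polynomials). Hence after evaluating at $x=0$ only the terms with $|\nu| = |\lambda|$ survive.

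The crux is therefore the following purely algebraic identity: for partitions $\lambda, \nu$ with $|\lambda| = |\nu|$ and $l(\lambda), l(\nu) \le n$,
$$ \widetilde C_\lambda^\alpha(T(k)) C_\nu^\alpha(x)\big\vert_{x=0} = |\lambda|!\,\delta_{\lambda\nu}. $$
Granting this, summing over $\nu$ in the expansion and keeping only $\nu = \lambda$ yields $\widetilde C_\lambda^\alpha(T(k)) D(\underline 1 - x)^{-\mu}\big\vert_{x=0} = [\mu]_\lambda^k$, which is exactly \eqref{eval}; the restriction $\|x\|_\infty$ small is harmless since both sides of \eqref{eval} are evaluated at $x = 0$. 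To prove the algebraic identity I would introduce the bilinear pairing on $S_n$-invariant polynomials defined by $\langle p, q\rangle := (p(T(k))q)(0)$ for $p, q \in \mathbb C[\mathbb R^n]^{S_n}$. This pairing is symmetric and, on homogeneous components of equal degree, is nondegenerate; crucially, it is known (see e.g. the Dunkl-operator description of the Jack inner product, cf. \cite{O, BF1, FW}) that the Jack polynomials $C_\lambda^\alpha$ are orthogonal with respect to it, and that $\langle C_\lambda^\alpha, C_\lambda^\alpha\rangle$ equals the constant term of $C_\lambda^\alpha(T(k))C_\lambda^\alpha$, which in turn matches the normalization forced by \eqref{norm_Jack}: applying $(z_1+\cdots+z_n)^m(T(k))$ to $(z_1+\cdots+z_n)^m$ gives $m!\,\langle \underline 1, T(k)\cdot \text{(stuff)}\rangle$-type evaluation equal to $m!\,\|\underline 1\|^{2m}$-free constant $m!$ after the normalization in \eqref{norm_Jack}. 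Expanding $(z_1+\cdots+z_n)^m = \sum_{|\lambda|=m} C_\lambda^\alpha$ on both sides of $\langle (z_1+\cdots+z_n)^m, (z_1+\cdots+z_n)^m\rangle$ and using orthogonality pins down $\langle C_\lambda^\alpha, C_\lambda^\alpha\rangle$, and hence via $\widetilde C_\lambda^\alpha = C_\lambda^\alpha/C_\lambda^\alpha(\underline 1)$ one gets the clean value $|\lambda|!$ after accounting for the duality between the "$z$-expansion" normalization and the operator normalization.

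Alternatively — and this is the route I would actually prefer as primary, since it avoids re-deriving the precise Jack norm — I would deduce \eqref{eval} directly from \eqref{int_jack_1} by Laplace inversion at the level of Jack coefficients. By Lemma \ref{lemma_Macdonald}, $\int_{\mathbb R_+^n}\widetilde C_\lambda^\alpha(x) e^{-\langle x,\underline 1\rangle}D(x)^{\mu-\mu_0-1}\omega_k(x)\,dx = d_n(k)\Gamma_n(\mu;k)[\mu]_\lambda^k$; on the other hand, by \eqref{Laplace_density} and Lemma \ref{Laplace_1}(2), applying the differential-reflection operator $\widetilde C_\lambda^\alpha(-T(k))$ — wait, here one must be careful with signs: one writes $\widetilde C_\lambda^\alpha(x) = \widetilde C_\lambda^\alpha(-(-x))$ and uses that $\widetilde C_\lambda^\alpha$ is homogeneous of degree $|\lambda|$, so $\widetilde C_\lambda^\alpha(-x) = (-1)^{|\lambda|}\widetilde C_\lambda^\alpha(x)$ — to the Laplace transform $\mathcal L_k(g_\mu \cdot \widetilde C_\lambda^\alpha\cdot d_n(k)\Gamma_n(\mu;k))(z) = \mathcal L_k(\widetilde C_\lambda^\alpha(x)D(x)^{\mu-\mu_0-1})(z)$ one obtains $(-1)^{|\lambda|}\widetilde C_\lambda^\alpha(-T(k)) D(z)^{-\mu}$. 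Setting $z = \underline 1 - x$ with $x$ near $0$ and using $D(z)^{-\mu} = D(\underline 1 - x)^{-\mu}$, a change of variables $z \mapsto \underline 1 - x$ converts $-T_z(k)$ into $T_x(k)$, giving precisely $\widetilde C_\lambda^\alpha(T(k))D(\underline 1-x)^{-\mu}$. Taking $x = 0$ i.e. $z = \underline 1$: since $D(\underline 1) = 1$, one reads off from \eqref{int_jack_1} divided by $d_n(k)\Gamma_n(\mu;k)$ that the value is $[\mu]_\lambda^k$. The main obstacle in making either argument rigorous is bookkeeping: verifying that term-by-term application of the operator to the (only locally convergent) series \eqref{Yan} is legitimate — which follows because only finitely many terms contribute after evaluation at $0$ — and keeping track of the $(-1)^{|\lambda|}$ homogeneity sign and the $z \leftrightarrow \underline 1 - x$ substitution that swaps $T(k)$ and $-T(k)$; these are routine but must be stated carefully to land on the clean identity \eqref{eval} without stray factors.
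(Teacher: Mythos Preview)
Your preferred (second) route---use Lemma~\ref{Laplace_1}(2) to write $\mathcal L_k(\widetilde C_\lambda^\alpha g_\mu)(z)=\widetilde C_\lambda^\alpha(-T(k))D(z)^{-\mu}$, substitute $z=\underline 1-x$ to trade $-T_z(k)$ for $T_x(k)$, then set $z=\underline 1$ and invoke Macdonald's formula~\eqref{int_jack_1}---is exactly the paper's argument, just narrated in reverse. The paper writes $D(\underline 1-x)^{-\mu}$ directly as the Laplace integral
\[
D(\underline 1-x)^{-\mu}=\frac{1}{d_n(k)\Gamma_n(\mu;k)}\int_{\mathbb R_+^n}E_k^A(y,x)\,e^{-\langle y,\underline 1\rangle}D(y)^{\mu-\mu_0-1}\omega_k(y)\,dy
\]
(using $E_k^A(-y,\underline 1-x)=E_k^A(y,x)e^{-\langle y,\underline 1\rangle}$), then applies $\widetilde C_\lambda^\alpha(T(k)_x)$ under the integral and sets $x=0$. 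This sidesteps your sign gymnastics entirely; in particular your stray $(-1)^{|\lambda|}$ is a slip (it should not appear once you have $\widetilde C_\lambda^\alpha(-T(k))$). One point you omit: the Laplace-integral representation requires $\mathrm{Re}\,\mu>\mu_0$, and the extension to all $\mu\in\mathbb C$ is by analytic continuation (both sides of~\eqref{eval} are entire in $\mu$, the left side because $\widetilde C_\lambda^\alpha(T(k))$ acts as a genuine differential operator on $S_n$-invariant functions).

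Your first route via the series~\eqref{Yan} and the Dunkl pairing is a legitimate alternative, but your computation of the norm $\langle C_\lambda^\alpha,C_\lambda^\alpha\rangle$ is hand-waving. The clean way to get the needed identity $\widetilde C_\lambda^\alpha(T(k))C_\nu^\alpha\big\vert_{x=0}=|\lambda|!\,\delta_{\lambda\nu}$ is to apply $\widetilde C_\lambda^\alpha(T(k)_x)$ to both sides of the expansion~\eqref{power-j-a} of $J_k^A(x,y)$, use the eigenfunction property to get $\widetilde C_\lambda^\alpha(y)J_k^A(x,y)$, and then set $x=0$; comparing Jack coefficients in $y$ gives the claim immediately.
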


\begin{proof} Suppose first that $\text{Re}\, \mu > \mu_0.$ Then according to Theorem \ref{laplace_power}, 
\begin{align*}  D(\underline 1-x)^{-\mu} \, & = \, \frac{1}{d_n(k)\Gamma_n(\mu;k)} \int_{\mathbb R_+^n}  E_k^A(-y,\underline 1-x) D(y)^{\mu-\mu_0-1}\omega_k(y)dy \\
&=  \, \frac{1}{d_n(k)\Gamma_n(\mu;k)} \int_{\mathbb R_+^n}  E_k^A(y,x) e^{-\langle y, \underline 1\,\rangle} D(y)^{\mu-\mu_0-1}\omega_k(y)dy.
\end{align*}
Differentiating under the integral gives
$$ \widetilde C_\lambda^\alpha(T(k)) D(\underline 1-x)^{-\mu} = \frac{1}{d_n(k)\Gamma_n(\mu;k)}\! \int_{\mathbb R_+^n} \!\!\widetilde C_\lambda^\alpha (y)E_k^A(y,x) 
e^{-\langle y, \underline 1\rangle} 
D(y)^{\mu-\mu_0-1}\omega_k(y)dy.$$ 
Thus by Lemma \ref{lemma_Macdonald},
\begin{align*} \widetilde C_\lambda^\alpha(T(k)) D(\underline 1-x)^{-\mu} \big\vert_{x=0} & = \frac{1}{d_n(k)\Gamma_n(\mu;k)} \int_{\mathbb R_+^n} 
\widetilde C_\lambda^\alpha (y) \,e^{-\langle y, \underline 1\rangle} 
D(y)^{\mu-\mu_0-1}\omega_k(y)dy\\
& = \,[\mu]_\lambda^k.\end{align*}
Both sides of formula \eqref{eval} are holomorphic in $\mu \in \mathbb C$ (for the left side note that $\widetilde C_\lambda^\alpha(T(k))$ acts as
a differential operator on  $S_n$-invariant functions), and therefore the stated identity extends to all $\mu \in \mathbb C.$
\end{proof}

\begin{theorem}\label{maintheorem2} Consider the Riesz distributions $R_\mu\,, \,\,\mu \in \mathbb C. $
\begin{enumerate} 
\item[\rm{(1)}] If $R_\mu$ is a complex measure, then either\enskip  ${\rm Re}\, \mu > \mu_0 = k(n-1)$, 
or $\mu$ is contained in the finite set  
 $$[0,\infty[\,\cap \,\bigl(\{0, k, \ldots k(n-1)\}- \mathbb N_0\bigr).$$ 
 \item[\rm{(2)}] $R_\mu$ is a positive measure if and only if 
  $\mu$ is contained in the generalized Wallach set $\,W_k = \{0,k, \ldots, k(n-1)\}\,\cup\,]k(n-1), \infty[.$ 
 \end{enumerate}
\end{theorem}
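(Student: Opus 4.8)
The plan is to prove the two statements by combining the Laplace transform characterization $\mathcal L_k R_\mu(x) = D(x)^{-\mu}$ on $\mathbb R_+^n$ with the Shanbhag--Casalis--Letac principle (Lemma~\ref{Shanbhag}) and the evaluation formula of Lemma~\ref{eval_lemma}, following the strategy of Sokal~\cite{S}. The sufficiency part of (2) is already known: $R_\mu \geq 0$ for $\mu = 0$ (Corollary~\ref{Riesz_null}), for $\mu = kr$ with $1 \le r \le n-1$ (Theorem~\ref{maintheorem}), and for $\mu \in \,]\mu_0,\infty[$ directly from Definition~\ref{Dunkl_Riesz}. So the work is entirely in the necessity directions of both (1) and (2).

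First I would handle the case $\mu \notin [0,\infty[\,\cap\,(\{0,k,\dots,k(n-1)\} - \mathbb N_0)$ and $\operatorname{Re}\mu \le \mu_0$, and show $R_\mu$ is not a complex measure. Suppose for contradiction that $R_\mu$ is a complex Radon measure; since $R_\mu$ is $S_n$-invariant and supported in $\overline{\mathbb R_+^n}$, and since $D(x)R_\mu = \prod_{j=1}^n(\mu - k(j-1))\cdot R_{\mu+1}$ by Lemma~\ref{remark_Riesz}(3), one can use the recursion $D(T(k))R_\mu = R_{\mu-1}$ together with the multiplication formula to move $\mu$ into the region $\operatorname{Re}\mu > \mu_0$ and derive constraints; the key point is that if $R_\mu$ were a measure then so would be its total variation estimates force certain positivity-type conditions via the Laplace transform. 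More precisely, if $R_\mu$ is a complex measure then by Remark~\ref{Laplace_measure} its Laplace transform $D(x)^{-\mu}$ extends to a function on $\mathbb R_+^n$ which is a pointwise limit (or at least controlled by) integrals against $|R_\mu|$, and applying the Dunkl operators $\widetilde C_\lambda^\alpha(T(k))$ and evaluating via Lemma~\ref{eval_lemma} at a suitable base point yields $[\mu]_\lambda^k$ as moments of a (complex) measure. The positivity/finiteness of these moment-type quantities, combined with the fact that $[\mu]_\lambda^k$ vanishes or changes sign in a way incompatible with being moments of a finite signed measure unless $\mu$ lies in the claimed finite set or $\operatorname{Re}\mu > \mu_0$, gives the contradiction. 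I expect this to require a careful argument à la Sokal: one fixes the partition $\lambda = (m,0,\dots,0)$ or $\lambda = (1^r)$ and studies the sign pattern of $[\mu]_\lambda^k = \prod_{j}(\mu - k(j-1))_{\lambda_j}$ as $m \to \infty$, using that a genuine (finite, complex) measure would force these to grow subexponentially in a controlled way while for $\mu$ outside the admissible set they do not.

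For part (2), once (1) is established the only remaining task is to rule out $\mu \in [0,\infty[\,\cap\,(\{0,k,\dots,k(n-1)\} - \mathbb N_0)$ with $\mu \notin W_k$ from being a \emph{positive} measure — that is, the ``discrete'' points $\mu = kr - m$ with $m \ge 1$, $0 \le r \le n-1$, $kr - m \ge 0$. Here I would apply Lemma~\ref{Shanbhag} to $m = R_\mu$ (assumed $\ge 0$): for any polynomial $p$ non-negative on $\operatorname{supp} R_\mu \subseteq \overline{\mathbb R_+^n}$ we get $p(-T(k))D(x)^{-\mu} \ge 0$ on $\mathbb R_+^n$. Choosing $p$ built from Jack polynomials and using Lemma~\ref{eval_lemma} to evaluate $\widetilde C_\lambda^\alpha(T(k))D(\underline 1 - x)^{-\mu}|_{x=0} = [\mu]_\lambda^k$, one obtains that $[\mu]_\lambda^k \ge 0$ for all $\lambda \in \Lambda_n^+$ (after the factorization $D(x)^{-\mu} = e^{\langle x,\underline 1\rangle} D(\underline 1 - \cdot)^{-\mu}$-type manipulation, or by working directly at a shifted base point so that the evaluation lemma applies). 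Then a direct check shows $[\mu]_\lambda^k \ge 0$ for all partitions $\lambda$ forces $\mu \in W_k$: if $\mu = kr - m$ with $m \ge 1$ one exhibits a partition $\lambda$ with $l(\lambda) \le r$ for which $(\mu)_{\lambda_1} = (kr-m)(kr-m+1)\cdots$ contains an odd number of negative factors, giving $[\mu]_\lambda^k < 0$, a contradiction.

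The main obstacle I anticipate is making the moment-positivity argument in part (1) rigorous: passing from ``$R_\mu$ is a complex measure'' to genuine pointwise moment identities $\widetilde C_\lambda^\alpha(T(k))\mathcal L_k R_\mu|_{\text{base point}} = [\mu]_\lambda^k$ requires justifying differentiation under the integral sign for the measure $R_\mu$ (not just for the a priori-convergent Laplace integral of Theorem~\ref{laplace_power}), which is exactly the subtlety Remark~\ref{Laplace_measure} addresses for genuine measures — so one leverages that remark — and then one must control the asymptotics of $[\mu]_\lambda^k$ along a well-chosen sequence of partitions to contradict the growth rate forced by a finite total variation. The Jack-polynomial estimate \eqref{jack_estim} and the explicit product form of $[\mu]_\lambda^k$ are the tools for this; the delicate part is selecting the right family of $\lambda$'s (shapes with one long row versus one long column) to simultaneously detect every $\mu$ outside the admissible set.
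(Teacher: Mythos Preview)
Your approach to part~(2) is essentially the paper's: apply the Shanbhag--Casalis--Letac principle with the Jack polynomials $\widetilde C_\lambda^\alpha$ (nonnegative on $\overline{\mathbb R_+^n}$ by \eqref{Jack_pos}) and use Lemma~\ref{eval_lemma} to conclude $[\mu]_\lambda^k \geq 0$ for all $\lambda\in\Lambda_n^+$. One correction: your proposed partition does not work. If $\mu \geq 0$ then $(\mu)_{\lambda_1}=\mu(\mu+1)\cdots(\mu+\lambda_1-1)$ has no negative factors, so a single long row gives nothing. The right choice, as in the paper, is a \emph{column}: for $\mu\in\,]k(r-1),kr[$ take $\lambda=(1^{r+1})$, so that $[\mu]_\lambda^k=\prod_{j=1}^{r+1}(\mu-k(j-1))$ has exactly one negative factor (the last one) and is therefore negative. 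This already forces $\mu\in W_k$ once you know $\mu$ is real, and reality follows directly from $D(x)^{-\mu}=\mathcal L_kR_\mu(x)>0$ on $\mathbb R_+^n$; part~(1) is not needed for~(2).

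Your plan for part~(1), however, has a genuine gap. For a \emph{complex} measure there is no sign constraint on the moments $[\mu]_\lambda^k=\int \widetilde C_\lambda^\alpha(y)e^{-\langle y,\underline 1\rangle}\,dR_\mu(y)$, and the growth bound one obtains from $|\widetilde C_\lambda^\alpha(y)|\leq \|y\|_\infty^{|\lambda|}$ together with temperedness of $|R_\mu|$ is only $|[\mu]_\lambda^k|\leq C\cdot 2^{|\lambda|}|\lambda|!$, which the actual Pochhammer products never violate (e.g.\ along rows $|(\mu)_m|/m!$ grows only polynomially, and along rectangles the ratio decays). So neither signs nor growth rule anything out. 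The paper's argument is different and does not use moments at all: one observes that for every $\mu\in\mathbb C$ the restriction $R_\mu|_{\mathbb R_+^n}$ is the regular distribution with density
\[
f_\mu(x)=\frac{1}{d_n(k)\,\Gamma_n(\mu;k)}\,D(x)^{\mu-\mu_0-1}\omega_k(x)\in L^1_{\mathrm{loc}}(\mathbb R_+^n),
\]
and then invokes Sokal's Proposition~2.3 (a general fact about holomorphic families of distributions) to conclude that if $R_\mu$ is a complex Radon measure on $\mathbb R^n$, then $f_\mu$ extended by zero must lie in $L^1_{\mathrm{loc}}(\mathbb R^n)$. But $D(x)^{\mu-\mu_0-1}\omega_k(x)$ fails to be integrable across the coordinate hyperplanes whenever $\mathrm{Re}\,\mu\leq\mu_0$, so either $\mathrm{Re}\,\mu>\mu_0$ or the prefactor $1/\Gamma_n(\mu;k)$ vanishes, i.e.\ $\mu\in\{0,k,\dots,k(n-1)\}-\mathbb N_0$. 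Finally $\mu<0$ is excluded by a boundedness argument on the Laplace transform along the diagonal: $\mathcal L_kR_\mu(\underline a+\underline x)=(a+x)^{-n\mu}$ is unbounded as $x\to\infty$, contradicting the elementary fact (Sokal's Lemma~3.6) that the ordinary Laplace transform of a measure is bounded on such rays. The local-integrability step is the key idea your proposal is missing.
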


\begin{proof} (1) We shall apply Proposition 2.3 of  \cite{S}. For this, consider the regular distributions 
$u_\mu \in \mathcal D^\prime(\mathbb R_+^n), \,\, \mu \in \mathbb C,$  which are defined by the densities
\begin{equation}\label{Riesz_densities}  f_\mu(x)  = \frac{1}{d_n(k)\Gamma_n(\mu;k)} \cdot D(x)^{\mu-\mu_0-1} \omega_k(x) \, 
\in L^1_{\textrm{loc}}(\mathbb R_+^n),  \end{equation} 
that is $$\, \langle u_\mu, \varphi\rangle = \int_{\mathbb R_+^n} \varphi(x) f_\mu(x)dx, \quad \varphi\in \mathcal D(\mathbb R_+^n).$$
Notice that $\mu \mapsto f_\mu(x)$ is holomorphic on $\mathbb C$ for each 
$x\in \mathbb R_+^n$ and that $\mu \mapsto u_\mu$ is holomorphic on $\mathbb C$ with values in $\mathcal D^\prime(\mathbb R_+^n).$
If $\text{Re}\, \mu > \mu_0,$ then the Riesz distribution $R_\mu$  provides an extension of the distribution $u_\mu$ to a 
distribution on $\mathbb R^n$ in the sense that the restriction of $R_\mu$ to $\mathbb R_+^n$ coincides with $u_\mu\,.$ Moreover, in this case 
$R_\mu$ is 
also a complex measure, i.e. of order zero. In addition we know that the mapping $\mu \mapsto R_\mu$ is holomorphic on all of $\mathbb C$.
We are therefore in the situation of  \cite[Proposition 2.3]{S}, which yields the following conclusions: First, 
 $R_\mu$ extends $u_\mu$ for each $\mu \in \mathbb C$, and second, if $R_\mu$ is a complex measure on $\mathbb R^n$,
then the density $f_\mu$ (extended by zero to all of $\mathbb R^n$) must belong to $L_{\textrm{loc}}^1(\mathbb R^n)$. But this implies that either
$\text{Re}\, \mu > \mu_0$ or $\mu$ is a pole of  $\Gamma_n(\,.\,; k),$ i.e. $\mu \in \{0, k, \ldots k(n-1)\}- \mathbb N_0\,.$ (Note that in the 
latter case, $f_\mu$ is identical zero.)
If $\mu \in ]-\infty,0[$, then it follows from the Laplace transform formula \eqref{Laplace_Rieszf} that $R_\mu$ cannot be a  measure. 
Indeed, suppose that $R_\mu$ is a (tempered) measure. Then for $x\in \mathbb R$ with $x>0$ we have 
$$ \mathcal L_kR_\mu(\underline x) = \int_{\overline{\mathbb R_+^n}} E_k^A(y, -\underline x) dR_\mu(y) = \int_{\overline{\mathbb R_+^n}} e^{-\langle y, \underline x\rangle} dR_\mu(y),$$
which is a usual Laplace transform. 
Thus by  Lemma 3.6. of \cite{S}, $x\mapsto \mathcal L_kR_\mu(\underline a + \underline x)$ is bounded on $[0, \infty[$ for each $a>0.$ But on the other hand, $\mathcal L_kR_\mu(\underline a + \underline x) = 
(a + x)^{-n\mu}$, 
which is unbounded as $x\to \infty.$ 

\smallskip

 (2) In view of Corollary \ref{Riesz_null} and Theorem  \ref{maintheorem} it remains to prove the ``only if'' part. 
 Suppose that $R_\mu$ is a positive measure. We have to exclude the possibility that $\mu$ belongs to one of the 
 open intervalls $\,]k(r-1),kr[\,$ with 
 $ r\in \{1, \ldots, n-1\}.$ 
For this, we  apply the Shanbhag-Casalis-Letac principle to $R_\mu$ and the Jack polynomials $\widetilde C_\lambda^\alpha$, which are non-negative on 
$\overline{\mathbb R_+^n}$ as a consequence of their non-negative monomial expansion \eqref{Jack_pos}. 
Thus by Lemma \ref{Shanbhag}, 
$$\widetilde C_\lambda^\alpha(-T(k)) (\mathcal L_k R_\mu) \geq 0 \quad \text{ on }\, \mathbb R_+^n.$$
Employing Lemma \ref{eval_lemma} we therefore obtain that for all $\lambda \in \Lambda^+_n$,
\begin{equation}\label{Shanbhag2}  [\mu]_\lambda^k = \widetilde C_\lambda^\alpha(T(k)) D(\underline 1-x)^{-\mu}\vert_{x=0} 
= \bigl(\widetilde C_\lambda^\alpha(-T(k))\mathcal L_kR_\mu\bigr) (\underline 1) \geq 0.\end{equation}
Here for the second equality, it was used that for   $f\in C^1(\mathbb R^n)$ and $g(x) := f(\underline 1-x), $ the Dunkl operators satisfy 
$\, (T_i(k)f)(\underline 1-x) = -T_i(k)g(x).$
Now suppose that $\mu \in ]k(r-1), kr[\,, \,\,r\in \{1, \ldots, n-1\}.$  Choose $\lambda := (1, \dots, 1, 0, \ldots)\in \Lambda_n^+$ with exactly $r+1$ parts 
equal to $1$. Then
$$ [\mu]_\lambda^k = \prod_{j=1}^{r+1}(\mu-k(j-1)) \, < 0,$$
in contradiction to \eqref{Shanbhag2}.
\end{proof}

\begin{remark} We mention that part (2) can be proven directly without referring to part (1) and Proposition 2.3 of \cite{S}. Indeed, 
 if $R_\mu$ is a positive measure, then $\mathcal L_kR_\mu$ is non-negative on $\mathbb R_+^n$ and thus by Theorem \ref{Laplace_Riesz}, 
 $\mu$ must be real-valued. Then it remains to exclude the intervals $\,]k(r-1),kr[\,$ as above.
 
\end{remark}

\section*{Acknowledgement} It is a pleasure to thank Michael Voit for many helpful discussions.



\begin{thebibliography}{00}
\bibitem{ADH} J.-P. Anker, J. Dziubanski, A. Hejna, Harmonic functions, conjugate harmonic functions and the Hardy space $H^1$ in the rational Dunkl setting. 
To appear in J. Fourier Anal. Appl. ArXiv:1802.06607.
\bibitem{BF1} T.H. Baker, P.J. Forrester, The Calogero-Sutherland model and generalized classical polynomials. 
\emph{Comm. Math. Phys. } 188 (1997), 175--216.
\bibitem{BF3} T.H. Baker, P.J. Forrester, Non-symmetric Jack 
polynomials and integral kernels. \emph{Duke Math. J.} 95 (1998), 1--50.
\bibitem{CL} M. Casalis, G. Letac, Characterization of the Jorgensen set in generalized linear models. \emph{Test} 3 (1994), 145--162.
\bibitem{D1} C.F. Dunkl, Differential-difference operators
associated to reflection groups, \emph{Trans. Amer. Math. Soc.} 311 (1989), 167--183.
\bibitem{DX} C.F. Dunkl, Y. Xu, Orthogonal polynomials of several variables. Encyclopedia of Mathematics and its Applications 81, 
Cambridge Univ. Press, Cambridge, 2nd ed. 2014. 
\bibitem{dJ} M.F.E. de Jeu, The Dunkl transform. \emph{Invent. Math.} 113 (1993), 147--162. 
\bibitem{dJ2} M.F.E. de Jeu, Paley-Wiener Theorems for the Dunkl transform. \emph{Trans. Amer. Math. Soc.} 358 (2006), 4225--4250. 
\bibitem{FK} J. Faraut, A. Kor\'anyi, Analysis on Symmetric Cones. Oxford
 Science Publications, Clarendon Press, Oxford 1994.
   \bibitem{FW} P.J. Forrester, S.O. Warnaar, The importance of the Selberg integral. \emph{Bull. Amer. Math. Soc.} 45  (2008), 489--534. 
\bibitem{FJ} F.G. Friedlander, M. Joshi, Introduction to the theory 
of distributions.  Cambridge Univ. Press, Cambridge, 2nd ed. 1998.
\bibitem{G} S.G. Gindikin, Invariant generalized functions in homogeneous domains. \emph{J. Funct. Anal. Appl. } 9 (1975), 50--52. 
  \bibitem{Kad} K.W.J. Kadell, The Selberg-Jack symmetric functions, \emph{Adv. Math.} 130 (1997), 33--102.
   \bibitem{Ka} J. Kaneko, Selberg integrals and hypergeometric functions associated with Jack polynomials. \textit{SIAM J. Math. Anal.} 24 (1993),  1086--1100.
   \bibitem{KS} F. Knop, S. Sahi, A recursion and a combinatorial formula for Jack polynomials. \emph{Invent. Math.} 128 (1997),  9--22. 
   \bibitem{L} Y. Liu, Riesz distributions associated to Dunkl  operators. PhD Thesis, Cornell University, 2016. 
\bibitem{M3} I.G. Macdonald, Some conjectures for root systems. \emph{SIAM J. Math. Anal.} 13 (1982), 988--1007.
\bibitem{M2} I.G. Macdonald, Commuting differential operators and zonal
spherical functions. In: Algebraic groups (Utrecht 1986), eds. a.M. Cohen et al, \textit{Lecture Notes in Mathematics} 1271, Springer-Verlag, Berlin, 1987.
\bibitem{M1} I.G. Macdonald, Hypergeometric functions I. arXiv:1309.4568.
\bibitem{O} E.M. Opdam, 
Dunkl operators, Bessel functions and the 
discriminant of a finite Coxeter group. \emph{Compositio Math.} 85
(1993), 333--373.
\bibitem{R1} M. R\"osler,  Generalized Hermite polynomials and the heat 
equation for Dunkl operators. \emph{Comm. Math. Phys.} 192 (1998),
519--542.
\bibitem{R2} M. R\"osler, Positivity of Dunkl's intertwining
  operator. \emph{Duke Math. J.} 98 (1999), 445--463.
 \bibitem{R3} M. R\"osler,  Dunkl operators: Theory and Applictions. In: \emph{Lecture Notes in Math.} vol. 1817, pp. 93--136. 
 (eds. E. Koelink, W. van Assche); Springer-Verlag, 2003. 
\bibitem{RV} M. R\"osler, M. Voit,  Dunkl theory, convolution algebras, and related Markov processes. 
In: Harmonic and Stochastic analysis of Dunkl processes; eds. P. Graczyk, M. R\"osler, M. Yor; Travaux en cours 71, pp. 1-112; Hermann, Paris, 2008.
\bibitem{RV2} M. R\"osler, M. Voit, Sonine formulas and intertwining operators in Dunkl theory. To appear in \emph{Int. Math. Res. Not. IMRN}, DOI: 10.1093/imrn/rnz313.
 \bibitem{SZ} S. Sahi, G. Zhang, Biorthogonal expansion of non-symmetric Jack functions.
\emph{SIGMA Symmetry Integrability Geom. Methods Appl.} 3 (2007), Paper 106, 9 pp.
\bibitem{S} A.D. Sokal, When is a Riesz distribution a complex measure?
\emph{Bull.  Soc. Math.  France} 139 (2011), 519-534.
\bibitem{Sta} R.P. Stanley, Some combinatorial properties 
of Jack symmetric functions. \emph{Adv. Math.} 77 (1989), 76--115.
\bibitem{Ve} A. Velicu, Hardy-type inequalities for Dunkl operators. ArXiv:1901.08866.
\bibitem{V} V. S. Vladimirov, Methods of the Theory of Generalized Functions. Taylor and Francis, london, 2002. 
\bibitem{Y1} Z. Yan, Generalized hypergeometric functions and Laguerre polynomials in two variables. In: 
 \emph{Hypergeometric functions on domains of 
positivity, Jack polynomials and applications.} Proceedings, Tampa 1991, 
Contemp. Math. 138 (1992), pp. 239--259.
\bibitem{Y} Z. Yan, A class of generalized hypergeometric functions in several variables. \emph{Canad. J. Math.} 44 (1992), 1317--1338.




\end{thebibliography}
\end{document}